\def\E{\mathbb E}
\newtheorem{lem}{Lemma}
\newtheorem{thm}{Theorem}
\newtheorem{prop}[thm]{Proposition}
\theoremstyle{definition}
\newtheorem*{remark}{Remark}
\theoremstyle{definition}
\xpatchcmd{\proof}{\itshape}{\normalfont\proofnameformat}{}{}
\newcommand{\proofnameformat}{}
\begin{document}

\renewcommand{\proofnameformat}{\bfseries}

\begin{center}
{\large\textbf{Random walks on the circle and Diophantine approximation}}

\vspace{5mm}

\textbf{Istv\'an Berkes$^1$ and Bence Borda$^{1,2}$}

\vspace{5mm}

{\footnotesize $^1$Alfr\'ed R\'enyi Institute of Mathematics

Re\'altanoda utca 13--15, 1053 Budapest, Hungary

\vspace{5mm}

$^2$Graz University of Technology

Steyrergasse 30, 8010 Graz, Austria

Email: \texttt{berkes.istvan@renyi.hu} and \texttt{borda@math.tugraz.at}}

\vspace{5mm}

{\footnotesize \textbf{Keywords:} random walks on groups, Markov chain, rational approximation, central limit theorem, law of the iterated logarithm, Kolmogorov metric}

{\footnotesize \textbf{Mathematics Subject Classification (2020):} 11K38, 11K60, 60J05, 60G50}
\end{center}

\vspace{5mm}

\begin{abstract}
Random walks on the circle group $\mathbb{R}/\mathbb{Z}$ whose elementary steps are lattice variables with span $\alpha \not\in \mathbb{Q}$ or $p/q \in \mathbb{Q}$ taken mod $\mathbb{Z}$ exhibit delicate behavior. In the rational case we have a random walk on the finite cyclic subgroup $\mathbb{Z}_q$, and the central limit theorem and the law of the iterated logarithm follow from classical results on finite state space Markov chains. In this paper we extend these results to random walks with irrational span $\alpha$, and explicitly describe the transition of these Markov chains from finite to general state space as $p/q \to \alpha$ along the sequence of best rational approximations. We also consider the rate of weak convergence to the stationary distribution in the Kolmogorov metric, and in the rational case observe a surprising transition from polynomial to exponential decay after $\approx q^2$ steps; this seems to be a new phenomenon in the theory of random walks on compact groups. In contrast, the rate of weak convergence to the stationary distribution in the total variation metric is purely exponential.
\end{abstract}

\section{Introduction}

Convergence of random walks on compact groups to the uniform distribution (Haar measure) is a classical topic in probability theory. Such a convergence takes place under very general assumptions, but its finer properties are rather intricate. A classical example is the cutoff phenomenon of Aldous and Diaconis \cite{A}, \cite{AD}, \cite[Chapter 4]{DI} for random walks on the finite symmetric group corresponding to card shuffling, when the distance from uniformity in the total variation metric remains close to $1$ for a long time, then drops almost immediately near $0$. In this paper we consider random walks on the circle group $\mathbb{R}/\mathbb{Z}$, which also exhibit surprisingly delicate phenomena.

Let $X_1, X_2, \dots$ be independent, identically distributed (i.i.d.) nondegenerate integer-valued random variables, and put $S_k=\sum_{j=1}^k X_j$. Given an irrational $\alpha$, the sequence $S_k \alpha \pmod{\mathbb{Z}}$ is a random walk on $\mathbb{R}/\mathbb{Z}$ whose asymptotic behavior depends sensitively on the Diophantine approximation properties of $\alpha$. In the terminology of probability theory, $S_k \alpha \pmod{\mathbb{Z}}$ is a discrete time Markov chain on a general state space, with the uniform distribution on $\mathbb{R}/\mathbb{Z}$ as its stationary distribution. Note that the chain is irreversible, and, as we will see, its terms are polynomially weakly dependent. We can also consider a discrete analogue by choosing a reduced fraction $p/q$ instead of an irrational $\alpha$. In this case $S_k p/q \pmod{\mathbb{Z}}$ is a random walk (in particular, a discrete time Markov chain) on the finite cyclic group $\mathbb{Z}_q := \{ 0,1/q, \dots (q-1)/q \}$. The main purpose of this paper is to study the asymptotic behavior of these random walks, and to explicitly describe the transition from finite to general state space as $p/q \to \alpha$ along the sequence of best rational approximations to a given irrational $\alpha$.

Starting with the irrational case, let
\[ \psi (k) := \sup_{0 \le x \le 1} \left| \Pr (\{ S_k \alpha \} \le x) -x \right| \]
denote the rate of weak convergence to the uniform distribution in the Kolmogorov metric, where $\{ \cdot \}$ denotes fractional part. Improving results of Schatte \cite{SCH1}, Diaconis \cite[Section 3.C]{DI}, Su \cite{SU} and Hensley and Su \cite{HS}, in \cite{BB2} we found the precise relationship between $\psi (k)$ and the Diophantine approximation properties of $\alpha$. In particular, assuming that $\inf_{h \in \mathbb{Z}\backslash \{ 0 \}} |h| \cdot \| h \alpha \| >0$, where $\| \cdot \|$ denotes distance from the nearest integer (such irrationals are called badly approximable) and that $\E X_1^2 < \infty$, we proved that $k^{-1/2} \ll \psi (k) \ll k^{-1/2}$. See also \cite{BB2} for sharp results under more general Diophantine conditions, and for heavy-tailed $X_1$; for instance, assuming that $\inf_{h \in \mathbb{Z}\backslash \{ 0 \}} |h|^{\gamma} \| h \alpha \| >0$ with some $\gamma \ge 1$, we have $\psi (k) \ll k^{-1/(2 \gamma)}$, and this is also sharp. For similar results on the real line instead of the circle group we refer to Bobkov \cite{BO1}, \cite{BO2}.

In the rational case it is natural to define the distance from uniformity in the ``discrete Kolmogorov metric'' as
\begin{equation}\label{psidisc}
\psi_{\mathrm{disc}} (k) := \max_{0 \le a < q} \left| \Pr (\{ S_k p/q \} \le a/q ) - (a+1)/q \right| ,
\end{equation}
where the maximum is over integer values of $a$. Note that the discrete Kolmogorov metric metrizes weak convergence on $\mathbb{Z}_q$, and so $\psi_{\mathrm{disc}}(k) \to 0$ if and only if the maximal span of $X_1$ (the largest integer $D$ such that $D \mid (X_1-X_2)$ a.s.) is relatively prime to $q$. Our first result establishes an unexpected behavior in the discrete setting: a transition from polynomial to exponential decay. This behavior has not yet been described for a random walk on any compact group in any probability metric.
\begin{thm}\label{discreteKolmogorovtheorem} Assume that $\E X_1^2 < \infty$, and let $\varphi (x) = \E e^{ixX_1}$ denote the characteristic function of $X_1$. There exists a constant $q_0$ depending only on the distribution of $X_1$ with the following property. If $p/q$ is a reduced fraction such that $q \ge q_0$, the maximal span $D$ of $X_1$ and $q$ are relatively prime, and $\min_{0<|h| \le q/2} |h| \cdot \| h p/q \| \ge A>0$ with some constant $A>0$, then
\[ k^{-1/2} \ll \psi_{\mathrm{disc}} (k) \ll k^{-1/2} \qquad \textrm{if } k \le q^2, \]
and
\[ \frac{|\varphi (2 \pi /(Dq))|^k}{q} \ll \psi_{\mathrm{disc}} (k) \ll \frac{|\varphi (2 \pi /(Dq))|^k}{q} \qquad \textrm{if } k>q^2. \]
The implied constants in the lower bounds depend only on the distribution of $X_1$; the implied constants in the upper bounds depend, in addition, on $A$.
\end{thm}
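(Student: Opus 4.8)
The plan is to analyse $\psi_{\mathrm{disc}}(k)$ through the finite Fourier transform on $\mathbb{Z}_q$. Let $\mu_k$ be the law of $S_kp\bmod q$ on $\{0,\dots,q-1\}$, with Fourier coefficients $\widehat{\mu_k}(h)=\varphi(2\pi ph/q)^k$ (up to a harmless sign convention). Summing the geometric series in the inversion formula gives the exact identity
\[ \psi_{\mathrm{disc}}(k)=\max_{0\le a<q}\Bigl|\tfrac1q\sum_{h=1}^{q-1}\widehat{\mu_k}(h)\sum_{j=0}^{a}e^{2\pi ijh/q}\Bigr|, \]
with the inner sum of modulus at most $\min(a+1,(2\|h/q\|)^{-1})$. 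Two facts control the coefficients. First, since $|\varphi|$ has period $2\pi/D$ and $|\varphi(t)|=1-\tfrac{\sigma^2}{2}t^2+o(t^2)$ near $0$ with $\sigma^2=\operatorname{Var}X_1>0$, one has $|\varphi(2\pi ph/q)|=g(\|Dph/q\|)$, where $g(\theta):=|\varphi(2\pi\theta/D)|$ satisfies $g(\theta)\le e^{-c\theta^2}$ for small $\theta$ and $g(\theta)\le 1-\delta$ for $\theta$ bounded away from $0$. Second, the Diophantine hypothesis \emph{transfers} from $p/q$ to $Dp/q$: for every $0<h\le q/2$,
\[ h\cdot\|Dph/q\|\ \ge\ A/D. \]
Indeed, if $h''\in(-q/2,q/2]$ is the residue of $Dh\bmod q$ (nonzero since $\gcd(D,q)=1$), then $ph''\equiv D(ph)\pmod q$, so $\|ph''/q\|=\|Dph/q\|$; applying the hypothesis at $h''$ and using $|h''|=q\|Dh/q\|\le Dh$ gives the claim. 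In particular $Dp/q$ has bounded partial quotients (up to denominator $q/2$), with bound depending only on $A$ and $D$.

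For the \textbf{upper bound when $k\le q^2$}, the term-by-term estimate is too weak (it only yields $O(q/k)$), so I would extract cancellation via the Erdős--Turán inequality applied to the measure $\mu_k$ on $\mathbb{R}/\mathbb{Z}$ (note $\psi_{\mathrm{disc}}(k)\le D^*(\mu_k)$): with cutoff $H\asymp\sqrt k$ (and $H=\lfloor q/2\rfloor$ once $\sqrt k>q/2$),
\[ \psi_{\mathrm{disc}}(k)\ \ll\ \frac1{\sqrt k}+\sum_{h=1}^{H}\frac{|\varphi(2\pi ph/q)|^k}{h}. \]
Now replace $1/h$ by $(D/A)\theta_h$ with $\theta_h:=\|Dph/q\|$ via the transfer lemma (which also forces $\theta_h\ge A/(DH)\gtrsim k^{-1/2}$ on this range), bound $g(\theta_h)^k\le e^{-ck\theta_h^2}$, and use the three-distance theorem for the bounded-partial-quotient fraction $Dp/q$ in the sharp form $\#\{h\le H:\theta_h\le T\}\asymp HT$ for $T\gtrsim 1/H$ (the minimum gap of $\{hDp/q\}_{h\le H}$ is $\gtrsim 1/H$, which is what removes the logarithm). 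A dyadic decomposition of the range of $\theta_h$ then collapses $\sum_{h\le H}\theta_h e^{-ck\theta_h^2}$ to $O(k^{-1/2})$. For the \textbf{upper bound when $k>q^2$} no cancellation is needed: the crude bound $\psi_{\mathrm{disc}}(k)\le\sum_{h\le \lfloor q/2\rfloor}|\varphi(2\pi ph/q)|^k/h$, the transfer lemma, and reindexing by $s=q\theta_h$ give $\psi_{\mathrm{disc}}(k)\ll\sum_{s\ge1}(s/q)g(s/q)^k$, whose $s=1$ term $q^{-1}|\varphi(2\pi/(Dq))|^k$ dominates the geometrically convergent tail precisely once $k>q^2$ (the $g\le1-\delta$ part contributes $\ll q(1-\delta)^k$, which is negligible for such $k$).

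For the \textbf{lower bounds} I would use an $L^2$ averaging argument rather than exhibit an explicit $a$. Writing $F_k(a)=\sum_{j\le a}(\mu_k(j)-1/q)$, Parseval on $\mathbb{Z}_q$ applied to these partial sums yields $\sum_a|F_k(a)|^2\ge c\,q\sum_{h=1}^{q-1}|\varphi(2\pi ph/q)|^{2k}/\min(h,q-h)^2$, hence
\[ \psi_{\mathrm{disc}}(k)^2\ \ge\ \frac1q\sum_{a}|F_k(a)|^2\ \gg\ \sum_{h=1}^{\lfloor q/2\rfloor}\frac{|\varphi(2\pi ph/q)|^{2k}}{h^2}. \]
It then suffices to keep one good term. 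When $k\le q^2$, Dirichlet's theorem gives $h^*\le C\sqrt k$ (and $\le q/2$) with $\|Dph^*/q\|\le(C\sqrt k)^{-1}$, so $g(\theta_{h^*})^{2k}\ge e^{-O(1)}$ and $(h^*)^{-2}\gg k^{-1}$, giving $\psi_{\mathrm{disc}}(k)\gg k^{-1/2}$. When $k>q^2$, take $h_0\le q/2$ with $\|Dph_0/q\|=1/q$; the transfer lemma forces $h_0\ge Aq/D$, while $h_0\le q/2$, so this single term gives $\psi_{\mathrm{disc}}(k)\gg q^{-1}|\varphi(2\pi/(Dq))|^k$. Bounded ranges of $k$ are handled by the trivial bound $\psi_{\mathrm{disc}}(k)\le1$, and the two parameter regimes are patched at $k\asymp q^2$, where $\sqrt k\asymp q$.

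The \textbf{main obstacle} is the upper bound for $k\le q^2$: genuine cancellation in the Fourier sum must be exploited, and the mechanism that makes it work is the combination of the Erdős--Turán cutoff at scale $\sqrt k$ with the \emph{sharp} (not merely discrepancy-type) count $\#\{h\le\sqrt k:\|Dph/q\|\le T\}\asymp\sqrt k\,T$. This is exactly where the bounded-partial-quotient structure of $Dp/q$ — itself the content of the transfer lemma, and where the coprimality of $D$ and $q$ enters — is indispensable: a logarithmic loss at this step would destroy the sharp $k^{-1/2}$ rate. Everything else (the Gaussian/period behaviour of $|\varphi|$, the geometric-series identity, and the $L^2$ lower bound) is routine once the transfer lemma is in place.
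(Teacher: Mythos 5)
Your proposal is correct in substance, but it takes a genuinely different route from the paper at each of the three main steps. For $k\le q^2$ the paper does not reprove the polynomial upper bound: it invokes the Berry--Esseen-type estimate of \cite{BB1} for badly approximable irrationals, notes that the same proof works for $p/q$ as long as $k\le q^2$, and adds the trivial $1/q\ll k^{-1/2}$ correction; your Erd\H{o}s--Tur\'an argument with cutoff $H\asymp\sqrt{k}$, the separation count $\#\{h\le H:\|Dph/q\|\le T\}\ll HT+1$ coming from $h\|Dph/q\|\ge A/D$ (only this one-sided bound is needed, not the two-sided $\asymp$ you assert), and the Gaussian envelope of $|\varphi|$ is a self-contained substitute. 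For $k>q^2$ the paper proves a general upper bound lemma via the continued fraction convergents of $p/q$ and a submultiplicative envelope $g$ of $|\varphi|$, with the last convergent producing the term $g(1/q)^k/q$; your reindexing by $s=q\|Dph/q\|$ after the bound $1/h\le (D/A)\|Dph/q\|$ is simpler, though the claimed dominance of the $s=1$ term genuinely needs the matching lower bound $|\varphi(2\pi/(Dq))|^k\ge e^{-Ck/q^2}$ from $\E X_1^2<\infty$ (which you invoke) together with $q\ge q_0$ to make $q(1-\delta)^k$ negligible. For the lower bounds the paper uses a discrete Koksma inequality with the single character $e(hx/q)$, $h\equiv p^{-1}\pmod q$, in the exponential regime, and a Chebyshev--pigeonhole atom argument for the $k^{-1/2}$ regime; your $L^2$/Parseval average over $a$ followed by keeping one frequency (Dirichlet for $k\lesssim q^2$, the solution of $Dph_0\equiv \pm 1\pmod q$ for $k>q^2$) is more unified and correctly keeps the lower-bound constants independent of $A$; the Parseval identity does hold for the partial sums $F_k(a)$ because the $a$-independent constant contributes only to the zero frequency. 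One small repair: for $k$ in a bounded range the lower bound requires $\psi_{\mathrm{disc}}(k)\gg 1$, which is not covered by ``the trivial bound $\psi_{\mathrm{disc}}(k)\le 1$''; either take the Dirichlet parameter to be $\max\{\lceil\sqrt{k}\rceil,Q_0\}$ with a constant $Q_0$ so that $\|Dph^*/q\|$ falls in the region where $|\varphi|$ is quadratically close to $1$, or use the paper's atom argument, which needs no Diophantine input at all.
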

Note that $|\varphi (2 \pi /(Dq))|^k$ is roughly $\exp (-c k/q^2)$ with $c=2 \pi^2 (\mathrm{Var} \, X_1)/D^2$. The value of $A>0$ depends only on the maximal partial quotient in the continued fraction expansion of $p/q$, but not on its length. In particular, if $p/q$ is a best rational approximation to a given badly approximable irrational $\alpha$, this condition is satisfied with an $A>0$ depending only on $\alpha$. In this case, for the first few steps the Markov chains $S_k \alpha \pmod{\mathbb{Z}}$ and $S_k p/q \pmod{\mathbb{Z}}$ are practically indistinguishable. Theorem \ref{discreteKolmogorovtheorem} describes with striking precision, that it takes constant times\footnote{The rates $k^{-1/2}$ and $|\varphi (2 \pi /(Dq))|^k/q$ are equal up to a constant factor if $q^2 \ll k \ll q^2$.} $q^2$ steps for $S_k p/q \pmod{\mathbb{Z}}$ to start to behave like a Markov chain on a finite state space; exponentially fast convergence to the stationary distribution is a hallmark property of finite state space Markov chains. We have an explicitly described transition from rational to irrational behavior: as $p/q \to \alpha$, the time of transition, constant times $q^2$ goes to infinity; in the limiting case of $S_k \alpha \pmod{\mathbb{Z}}$ we end up with the polynomial decay $k^{-1/2}$ for all $k \ge 1$. Obvious modifications of the proof of Theorem \ref{discreteKolmogorovtheorem} yield similar results under more general Diophantine conditions on $p/q$, and for heavy-tailed $X_1$.

In contrast, for any $q \ge q_0$ the distance $\psi_{\mathrm{TV}}(k)$ of the distribution of $\{ S_k p/q \}$ from uniformity in the total variation metric satisfies
\begin{equation}\label{psiTV}
|\varphi (2 \pi /(Dq))|^k \ll \psi_{\mathrm{TV}}(k) \ll |\varphi (2 \pi /(Dq))|^k \qquad \textrm{for all } k \ge 1,
\end{equation}
where $q_0$ and the implied constants depend only on the distribution of $X_1$, provided that $\E X_1^2 < \infty$ and that $D$ and $q$ are relatively prime. The rate does not depend on the Diophantine properties of the fraction $p/q$; indeed, multiplication by an integer $p$ relatively prime to $q$ is a bijection of $\mathbb{Z}_q$, and thus does not change the distance in total variation. In particular, it takes constant times $q^2$ steps to get close to uniformity in the total variation metric as well; however, there is no transition from polynomial to exponential decay. Note that in the irrational setting $S_k \alpha \pmod{\mathbb{Z}}$ does not converge to the uniform distribution on $\mathbb{R}/\mathbb{Z}$ in total variation. Special cases of \eqref{psiTV} were first proved by Chung, Diaconis and Graham \cite{CDG}, see also \cite{AD} and \cite[Section 3.C]{DI}.

In addition to weak convergence, we also study the empirical distribution by considering additive functionals, that is, sums of the form $\sum_{k=1}^N f(S_k \alpha)$. Let\footnote{Throughout the paper we identify $\mathbb{R}/\mathbb{Z}$ by $[0,1)$, and functions on the circle group by $1$-periodic functions on the real line.} $\mathcal{F}$ denote the set of all $1$-periodic functions $f: \mathbb{R} \to \mathbb{R}$ such that $f$ is of bounded variation on $[0,1]$, let $E(f)=\int_0^1 f(x) \, \mathrm{d}x$, and set
\begin{equation}\label{C}
C(\alpha, f) = \E \bar{f}(U)^2 + 2 \sum_{k=1}^{\infty} \E \bar{f}(U) \bar{f}(U+S_k\alpha ),
\end{equation}
where $\bar{f}=f-E(f)$, and $U$ is a random variable uniformly distributed on $\mathbb{R}/\mathbb{Z}$, independent of $X_1, X_2, \dots$. Our second result is the central limit theorem (CLT) and the law of the iterated logarithm (LIL) for the sum $\sum_{k=1}^N f(S_k \alpha )$.
\begin{thm}\label{maintheorem} Assume that $X_1>0$ a.s., and that $\E X_1 < \infty$. Let $\alpha$ be irrational such that $\inf_{h \in \mathbb{Z}\backslash \{0 \}} |h|^{\gamma} \| h \alpha \| >0$ with some constant $1 \le \gamma < (3+\sqrt{5})/4 \approx 1.309$. Then for any $f \in \mathcal{F}$ the series in \eqref{C} is convergent, and $C(\alpha, f) \ge 0$ with equality if and only if $\bar{f}=0$ a.e. For any $f \in \mathcal{F}$ the sum $\sum_{k=1}^N f(S_k \alpha)$ satisfies the central limit theorem
\begin{equation}\label{CLT}
\frac{\sum_{k=1}^N f(S_k \alpha ) - E(f)N}{\sqrt{N}} \overset{d}{\to} \mathcal{N} (0,\sigma^2 )
\end{equation}
and the law of the iterated logarithm
\begin{equation}\label{LIL}
\limsup_{N \to \infty} \frac{\sum_{k=1}^N f(S_k \alpha ) -E(f)N}{\sqrt{2 N \log \log N}} = \sigma \qquad \textrm{a.s.}
\end{equation}
with $\sigma = \sqrt{C(\alpha, f)}$.
\end{thm}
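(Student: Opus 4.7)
The plan is to Fourier-expand $f$, turn the partial sums into a martingale via the multiplicative structure of $e^{2\pi i h S_k\alpha}$, apply the classical martingale CLT and LIL to a truncated Fourier polynomial, and use the Diophantine hypothesis to kill the high-frequency tail. Writing $\bar f(x)=\sum_{h\ne 0}\hat f(h)e^{2\pi ihx}$ with $|\hat f(h)|\ll 1/|h|$ (from integration by parts against the BV function $f$), the decisive identity is
\[
e^{2\pi ihS_k\alpha}=\mu_h\,e^{2\pi ihS_{k-1}\alpha}+V_k^{(h)},\qquad \mu_h:=\varphi(2\pi h\alpha),
\]
where $V_k^{(h)}:=e^{2\pi ihS_{k-1}\alpha}\bigl(e^{2\pi ihX_k\alpha}-\mu_h\bigr)$ is a bounded martingale difference with $\E\bigl[|V_k^{(h)}|^2\,\big|\,\mathcal F_{k-1}\bigr]=1-|\mu_h|^2$. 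Iterating the linear recursion, summing over $k$, and truncating the Fourier series at level $H$ yields
\[
\sum_{k=1}^N f_H(S_k\alpha)=\sum_{j=1}^N M_j^{(H)}+O_H(1),\qquad M_j^{(H)}:=\sum_{0<|h|\le H}\frac{\hat f(h)}{1-\mu_h}\,V_j^{(h)},
\]
which is a bounded martingale-difference sequence.

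Next I compute $\E\bigl[|M_j^{(H)}|^2\mid\mathcal F_{j-1}\bigr]$, producing a trigonometric polynomial in $S_{j-1}\alpha$ whose average over $\mathbb R/\mathbb Z$ equals the truncated quantity $C(\alpha,f_H)=\sum_{0<|h|\le H}|\hat f(h)|^2(1-|\mu_h|^2)/|1-\mu_h|^2$. Since $\{S_k\alpha\}$ equidistributes a.s., Cesàro averages of these conditional variances converge a.s.\ to $C(\alpha,f_H)$, so the martingale CLT (Brown--McLeish) and the martingale LIL (Heyde--Scott) deliver \eqref{CLT} and \eqref{LIL} for $f_H$ with $\sigma^2=C(\alpha,f_H)$. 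The covariance identity $\E\bar f(U)\bar f(U+S_k\alpha)=\sum_{h\ne 0}|\hat f(h)|^2\bar\mu_h^k$ with an independent uniform $U$ identifies the full $C(\alpha,f)$ as the geometric resummation $\sum_{h\ne 0}|\hat f(h)|^2(1-|\mu_h|^2)/|1-\mu_h|^2$; nonnegativity and the characterization of equality are visible from the formula. Convergence of this series uses the Diophantine lower bound $|1-\mu_h|\gg|h|^{-\gamma}$ (coming from $\|h\alpha\|\gg|h|^{-\gamma}$ and $|\mathrm{Im}(1-\varphi(2\pi\theta))|\gtrsim|\theta|\,\E X_1$ for small $|\theta|$, together with boundedness of the span of $X_1$).

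The hard part will be controlling the tail $r_H:=\bar f-f_H$ uniformly in $N$. For the CLT an $L^2$ bound of the form $\E\bigl|\sum_{k\le N}r_H(S_k\alpha)\bigr|^2\ll N\cdot\mathrm{tail}(H)$ with $\mathrm{tail}(H)\to 0$, combined with standard truncation, suffices. For the LIL, however, I need a fourth-moment estimate of $\sum_{k\le N}r_H(S_k\alpha)$ that is $o(N^2\log\log N)$; expanding, this reduces to sums over $4$-tuples $(h_1,h_2,h_3,h_4)$ with $|h_i|>H$ of products $\prod_i\hat f(h_i)/(1-\mu_{h_i})$ weighted by $\E\prod_i e^{2\pi ih_iS_{k_i}\alpha}$, which forces a linear constraint $h_1+h_2+h_3+h_4=0$ in the dominant ``diagonal'' regime. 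Bounding this requires applying the Diophantine hypothesis both to the individual indices $h_i$ and to the free sub-combinations via a Weyl-type character-sum estimate; the resulting exponent in $H$ is negative exactly when $4\gamma^2-6\gamma+1<0$, that is, precisely when $\gamma<(3+\sqrt 5)/4=\phi^2/2$ with $\phi$ the golden ratio. This sharp fourth-moment calculation is the technical heart of the proof and is what pins down the stated threshold on $\gamma$.
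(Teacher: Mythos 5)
Your martingale construction for the truncated polynomial $f_H$ is sound and is a genuinely different route from the paper (which creates exact independence of block sums via Schatte's perturbation device and then invokes the Lindeberg CLT and Kolmogorov LIL for independent summands, rather than a martingale CLT/LIL). But essentially all of the difficulty of the theorem lives in the tail $r_H$, and your plan for it does not work as described. For the CLT you want $\E\bigl|\sum_{k\le N}r_H(S_k\alpha)\bigr|^2\ll N\cdot\mathrm{tail}(H)$ uniformly in $N$; expanding over frequencies $|h|>H$, the off-diagonal terms are controlled only through Diophantine sums of the type $\sum_{|h|>H}1/(|h|\,\|h\alpha\|)$, and these diverge for every $\gamma\ge 1$, so no term-by-term bound over the infinite frequency range exists. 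One must first cut the Fourier series at an $N$-dependent level, and the only available tool is a Koksma/Fej\'er argument exploiting the separation $\|S_k\alpha-S_\ell\alpha\|\gg|S_k-S_\ell|^{-\gamma}$ of the visited points --- this is exactly where the hypothesis $X_1>0$ a.s.\ enters (your proposal never uses it), and the balancing of the Fej\'er truncation error (of order $N^{\gamma}\log H/H$) against the off-diagonal variance error (of order $H^{2\gamma-2}$, with further losses coming from the nonstationary start) is what actually produces the restriction on $\gamma$; in the paper this is the content of Lemmas \ref{fourierapprox}, \ref{expsummoment} and \ref{approxerror} and the choice $H\approx 2^{(3\gamma-1)n/(4\gamma^2-4\gamma+2)}$.

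The fourth-moment plan for the LIL has two further defects. Since the chain is started at $0$ rather than from its stationary distribution, $\E\prod_{i}e^{2\pi i h_iS_{k_i}\alpha}$ does \emph{not} vanish off the resonance $h_1+h_2+h_3+h_4=0$: it is a product of powers of $\varphi$ at the partial sums of the $h_i$, and these non-resonant, nonstationary contributions are precisely the hard part of the problem (in the stationary or quenched setting the result is known for all $\gamma<2$, as the paper points out). Moreover, even granting such a bound, a fourth moment of size $o(N^2\log\log N)$ is too weak for the LIL: Chebyshev at level $\varepsilon\sqrt{N\log\log N}$ along dyadic $N=2^n$ gives exceptional probabilities of order $1/\log\log 2^n$, which are not summable, so Borel--Cantelli fails; one needs a genuine power saving in $N$ together with a maximal inequality over the block partial sums (the paper uses Rademacher--Menshov inside its block decomposition, plus the ratio estimate of Lemma \ref{charfunctionlemma}(ii)). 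So the step you describe as the technical heart --- the computation that is supposed to pin down $\gamma<(3+\sqrt5)/4$ --- is asserted rather than carried out, and the mechanism you announce for it (resonance counting plus Weyl-type estimates) is not one that can succeed for a walk started at a point.
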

\noindent Note that \eqref{CLT} expresses convergence in distribution to the mean zero normal distribution with variance $\sigma^2$ (interpreted as the constant $0$ if $\sigma =0$). In addition to badly approximable irrationals, the Diophantine condition is satisfied by all algebraic irrationals and also by a.e.\ real number in the sense of the Lebesgue measure. We also prove an almost sure approximation of the same sum by a Wiener process.
\begin{thm}\label{Wienertheorem} Assume that the conditions of Theorem \ref{maintheorem} hold, and let $f \in \mathcal{F}$. After a suitable extension of the probability space, there exists a stochastic process $\zeta (t)$ in the Skorokhod space $D[0, \infty )$ with the same distribution as $\sum_{1 \le k \le t} f(S_k \alpha)$ such that
\[ \zeta (t) = \sigma W(t)+O(t^{1/2-\varepsilon}) \quad \textrm{a.s.} \]
with $\sigma = \sqrt{C(\alpha ,f)}$, a standard Wiener process $W(t)$, and some constant $\varepsilon >0$ depending only on $\gamma$.
\end{thm}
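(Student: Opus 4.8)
The plan is to prove a strong approximation (almost sure invariance principle) for the centered partial sums $\Sigma_N:=\sum_{k=1}^N\bar f(S_k\alpha)$, $\bar f=f-E(f)$, by reducing $\Sigma_N$ to a sum of independent, bounded random variables and invoking the Koml\'os--Major--Tusn\'ady strong approximation. We may assume $E(f)=0$, and, by Theorem \ref{maintheorem}, that $\sigma=\sqrt{C(\alpha,f)}>0$; if $\sigma=0$ then $\bar f=0$ a.e., so being of bounded variation it vanishes off a countable set, whence $\Sigma_N=O(1)$ a.s. Observe that $f$ of bounded variation is bounded, and that $X_1>0$ a.s.\ together with $\E X_1<\infty$ makes $(S_k)$ strictly increasing with $S_k/k\to\E X_1$ a.s.; consequently $\{S_k\alpha\}_{k\le N}$ is a thinned copy of $\{n\alpha\}_{n\le S_N}$, which lets us import the sharp discrepancy estimates for $\{n\alpha\}$ valid under $\inf_{h}|h|^{\gamma}\|h\alpha\|>0$, and it makes the time scale of the limiting Wiener process the natural one.

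The reduction follows the argument behind Theorem \ref{maintheorem}, now carried out quantitatively. First approximate $f$ by a trigonometric polynomial $f_M$ of degree $M=N^{c}$: the contribution of $f-f_M$ to $\Sigma_N$ has variance $\ll N\sum_{|h|>M}|h|^{-2}\|h\alpha\|^{-1}\ll N(\log M)/M$ (using $|1-\varphi(2\pi h\alpha)|\gg\|h\alpha\|$, which is where $\E X_1<\infty$ enters), and is also bounded via Koksma's inequality and the discrepancy of $\{n\alpha\}$, so it is negligible. Then split $\{1,\dots,N\}$ into alternating long blocks of length $L=N^{a}$ and short gap blocks of length $\ell=N^{b}$, $b<a<1$. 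The gap blocks contribute negligibly (again by the covariance estimates underlying Theorem \ref{maintheorem}), and they make the long-block sums nearly independent: subtracting their conditional means leaves a martingale-difference block term whose conditional law, given the past, is within $\psi(\ell)+O(L^{-\delta})$ in the Kolmogorov metric of a fixed centered law with variance $\asymp\sigma^2 L$ --- here $\psi(\ell)\ll\ell^{-1/(2\gamma)}$ is the weak-convergence rate stated before Theorem \ref{discreteKolmogorovtheorem} and $L^{-\delta}$, $\delta=\delta(\gamma)>0$, is a Berry--Esseen-type rate of normal approximation for the block; the subtracted conditional means form a coboundary that telescopes and is negligible. A Berkes--Philipp type approximation lemma then produces, on an enlarged probability space, independent bounded surrogates $Y_1^\ast,\dots,Y_n^\ast$, $n\approx N/L$, for these block terms, with $\sum_j|Y_j-Y_j^\ast|$ small outside an event whose probability is summable along $N=2^r$.

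Applying the Koml\'os--Major--Tusn\'ady theorem to the $Y_j^\ast$ yields a Wiener process matching their partial sums with error $O(\sqrt{L}\log N)$; reassembling the trigonometric tail, the gap-block sums, the coupling error, and the last incomplete block, and patching over dyadic intervals $N\in[2^r,2^{r+1})$ via Borel--Cantelli, one obtains the process $\zeta\in D[0,\infty)$ with $\zeta(t)=\sigma W(t)+O(t^{1/2-\varepsilon})$ after the exponents $a,b,c$ are optimized. The main obstacle is this optimization. Because the rate of weak convergence to stationarity is only polynomial ($\psi(\ell)\ll\ell^{-1/(2\gamma)}$), and because for $\gamma\ge1$ even the natural coboundary $(I-P)^{-1}\bar f$ (with $P$ the transition operator of the chain) fails to be square-integrable --- so that the exponent $\delta(\gamma)$ of normal approximation for a block degrades as $\gamma$ grows --- balancing $\delta(\gamma)$ against the gap-block error $\asymp N^{(1-a+b)/2}$, the coupling loss, and the error $\asymp N^{a/2}$ of the Koml\'os--Major--Tusn\'ady step, all against $N^{1/2}$ with a uniform gain, leads to a quadratic condition on $\gamma$ whose solution is precisely $\gamma<(3+\sqrt5)/4$, with $\varepsilon=\varepsilon(\gamma)\to0$ as $\gamma$ approaches the endpoint. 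A second, more technical difficulty is that the chain is \emph{not} $\beta$-mixing --- by the remarks around \eqref{psiTV} the law of $S_k\alpha$ stays at total-variation distance $1$ from uniform --- so the coupling with independent variables cannot be done at the level of the raw chain, only at the level of the smooth block functionals using the Kolmogorov-metric rate $\psi$; establishing a version of that rate \emph{conditionally on, and uniformly in}, the block-start state is where most of the work beyond Theorem \ref{maintheorem} lies.
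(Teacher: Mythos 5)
Your overall architecture (Fourier truncation, alternating long/short blocks, coupling the long-block sums with independent variables, a strong approximation theorem for independent sums, dyadic patching) is in the same family as the paper's, but the mechanism you choose for producing independence is where the proposal breaks down as a proof. The paper does not need any normal approximation or conditional-law estimate for individual blocks: Schatte's perturbation lemma (Lemma \ref{schattelemma}) uses the short gap block plus an auxiliary uniform variable to perturb each long-block sum by a quantity $\le \psi(|J_{n,i-1}|)$ so that the perturbed block sums $T_{n,i}^*$ are \emph{exactly} independent with $\E T_{n,i}^*=0$ and $\E T_{n,i}^{*2}\sim C(\alpha,f)|H_{n,i}|$, and the entire difficulty is concentrated in one quantitative statement, Lemma \ref{approxerror}, bounding $\max_R|\sum_{i\le R}(T_{n,i}-T_{n,i}^*)|$ via the characteristic-function estimates of Lemma \ref{charfunctionlemma}, the second-moment computation of Lemma \ref{expsummoment} (including the off-diagonal terms, which is where Lemma \ref{charfunctionlemma}(ii) and the divisibility of the block lengths by $D/d$ enter), and the Rademacher--Menshov inequality; Theorem \ref{Wienertheorem} then follows by applying Strassen's theorem to the independent bounded $T_{m,i}^*$ and transferring the remainder by a distributional-copy argument. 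Your route instead requires (a) a Berry--Esseen type rate $L^{-\delta(\gamma)}$ for a block sum of the stationary chain, (b) that closeness of the block-start distribution in the Kolmogorov metric (rate $\psi(\ell)$) transfers to closeness of the \emph{law of the block functional}, uniformly in the start state, and (c) that the subtracted conditional means ``form a coboundary that telescopes and is negligible.'' None of (a)--(c) is proved or even given explicit exponents: $\delta(\gamma)$ is never specified, (b) is exactly the point you concede ``is where most of the work lies,'' and (c) is false as stated as a telescoping identity --- the conditional block means, after Fourier truncation at level $M$, are each of size up to $\asymp M^{\gamma-1}$ by \eqref{diophestimate}, so their total over $\asymp N/L$ blocks is a genuine constraint in the optimization, not an automatically negligible coboundary (indeed you yourself note that the Poisson-equation solution is not square-integrable).

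Because these exponents are missing, the final assertion that the balancing ``leads to a quadratic condition on $\gamma$ whose solution is precisely $\gamma<(3+\sqrt5)/4$'' is unsupported; in the paper that threshold emerges from a very specific choice $H=\lceil 2^{(3\gamma-1)n/(4\gamma^2-4\gamma+2)}\rceil$ balancing the error terms of Lemma \ref{approxerror}, and there is no reason your (different) error budget --- KMT error $O(\sqrt{L}\log N)$, coupling loss from $\psi(\ell)$, an unknown $\delta(\gamma)$, and the conditional-mean term --- would reproduce it. Minor additional points: KMT is stated for i.i.d.\ summands, so you would need a non-identically-distributed version (Sakhanenko/Einmahl) or, more simply, Strassen's theorem as in the paper, which suffices for the polynomial rate claimed; and the dyadic patching must produce a single process on $[0,\infty)$ with the same distribution as $\sum_{1\le k\le t}f(S_k\alpha)$, which requires the explicit extension/copy construction the paper carries out at the end of its proof. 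As it stands, the proposal is a plausible program whose decisive quantitative steps are acknowledged but not executed, so it does not yet constitute a proof of Theorem \ref{Wienertheorem}.
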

\noindent The almost sure approximation in Theorem \ref{Wienertheorem} immediately implies the CLT and the LIL in Theorem \ref{maintheorem}, as well as the almost sure asymptotics and the limit distribution of more general functionals of the process $\sum_{1 \le k \le t} f(S_k \alpha)$. Using the piecewise linear functions $\sum_{1 \le k \le \lfloor t \rfloor} f(S_k \alpha) + (t-\lfloor t \rfloor ) f(S_{\lfloor t \rfloor +1} \alpha)$ instead, Theorem \ref{Wienertheorem} holds in the space of continuous functions $C[0, \infty )$.

In a previous paper \cite{BB2} we observed a transition in the behavior of the chain from weak to strong dependence as $\gamma$ passes the critical value $2$; the sequence $S_k \alpha \pmod{\mathbb{Z}}$ behaves, from the point of view of discrepancy, like independent random variables if $1 \le \gamma <2$, but not when $\gamma >2$, see Section \ref{relatedsection}. For this reason we conjecture that Theorems \ref{maintheorem} and \ref{Wienertheorem} in fact hold for all $1 \le \gamma <2$.

Starting the chain from its stationary distribution corresponds to $U+S_k \alpha \pmod{\mathbb{Z}}$, where $U$ is as in \eqref{C}. This stationary sequence exhibits the same transition from weak to strong dependence: the sum $\sum_{k=1}^N f(U+S_k \alpha)$, $f \in \mathcal{F}$ satisfies the CLT and the LIL if $1 \le \gamma <2$, but not when $\gamma \ge 2$. The same holds in the quenched setting; that is, the sum $\sum_{k=1}^N f(x+S_k \alpha)$, $f \in \mathcal{F}$ satisfies the CLT and the LIL for a.e.\ $x \in \mathbb{R}$ if $1 \le \gamma <2$, but not when $\gamma \ge 2$. A detailed proof of these results will be given in an upcoming paper. The novelty of Theorems \ref{maintheorem} and \ref{Wienertheorem} lies in the fact that the chain is started from a nonstationary distribution; in fact, from the specific point $x=0$ instead of a typical $x \in \mathbb{R}$. Using $x=0$ as starting point makes the problem considerably harder, and requires a blend of arithmetic and Fourier analytic arguments to complement classical methods of probability theory.

The rational case is of course much simpler. Indeed, if the maximal span of $X_1$ is relatively prime to $q$, then $S_kp/q \pmod{\mathbb{Z}}$ is an irreducible Markov chain on $\mathbb{Z}_q$, and from classical theorems for finite state space Markov chains \cite[Chapter 16]{CH} it follows that for any $f \in \mathcal{F}$, the CLT
\begin{equation}\label{rationalCLT}
\frac{\sum_{k=1}^N f(S_k p/q) -E(f)N}{\sqrt{N}} \overset{d}{\to} \mathcal{N}(0,\sigma^2 )
\end{equation}
and the LIL
\begin{equation}\label{rationalLIL}
\limsup_{N \to \infty} \frac{\sum_{k=1}^N f(S_k p/q) - E(f) N}{\sqrt{2N \log \log N}} = \sigma \qquad \mathrm{a.s.}
\end{equation}
hold with $\sigma = \sqrt{C(p/q, f)}$; the only difference is that $E(f)=q^{-1}\sum_{a=0}^{q-1} f(a/q)$ is the average of $f$ on $\mathbb{Z}_q$, and that in the definition of $C(p/q, f)$ the variable $U$ is uniformly distributed on $\mathbb{Z}_q$. In this case $C(p/q, f) \ge 0$ with equality if and only if $\bar{f}=0$ on $\mathbb{Z}_q$.

Informally, Theorem \ref{maintheorem} states that the Markov chain $S_k p/q \pmod{\mathbb{Z}}$, whose state space is finite but of increasing size as $q \to \infty$, begins to behave more and more like the Markov chain $S_k \alpha \pmod{\mathbb{Z}}$ with an irrational $\alpha$. What we can formally say is that key parameters of these Markov chains, such as the expected value and the variance converge:
\[ \frac{1}{q} \sum_{a=0}^{q-1} f (a/q) \to \int_0^1 f(x) \, \mathrm{d}x \qquad \textrm{and} \qquad C(p/q, f) \to C(\alpha, f) \]
as $p/q \to \alpha$ along the sequence of best rational approximations to a given irrational $\alpha$, see Proposition \ref{Cconvergence}. Once again, we have an explicitly described transition from rational to irrational behavior, with Theorem \ref{maintheorem} corresponding to the limiting case.

The rest of the paper is organized as follows. Several previous results related to Theorems \ref{maintheorem} and \ref{Wienertheorem} are listed in Section \ref{relatedsection}. We prove Theorems \ref{maintheorem} and \ref{Wienertheorem} in Section \ref{ergodicsection}, and the remarks made about the rational case at the end of Section \ref{variancesection}. The proof of Theorem \ref{discreteKolmogorovtheorem} and \eqref{psiTV} are given in Section \ref{ratesection}.

\section{Related results}\label{relatedsection}

Given a sequence of i.i.d.\ $\mathbb{R}/\mathbb{Z}$-valued random variables $\zeta_1, \zeta_2, \dots$, the qualitative behavior of the random walk $Z_k=\sum_{j=1}^k \zeta_j \pmod{\mathbb{Z}}$ is fairly straightforward. A classical result of L\'evy \cite{LE} states that $Z_k$ converges in distribution to the uniform distribution on $\mathbb{R}/\mathbb{Z}$ if and only if the distribution of $\zeta_1$ is not supported on a translate of a finite cyclic subgroup. The corresponding ergodic theorem is due to Robbins \cite{RO}: we have
\begin{equation}\label{equidistribution}
\frac{1}{N} \sum_{k=1}^N f(Z_k) \to \int_0^1 f(x) \, \mathrm{d}x \qquad \textrm{a.s.}
\end{equation}
for all continuous functions $f: \mathbb{R}/\mathbb{Z} \to \mathbb{R}$ if and only if the distribution of $\zeta_1$ is not supported on a finite cyclic subgroup. Note that \eqref{equidistribution} expresses weak convergence of the empirical distribution $N^{-1} \sum_{k=1}^N \delta_{Z_k}$, with $\delta$ denoting the Dirac measure; in other words, the equidistribution of the random sequence $Z_k$. Both facts generalize to compact groups.

Quantitative forms of the ergodic theorem \eqref{equidistribution} are more involved. In a recent paper \cite{BB4} we proved that
\[ \limsup_{N \to \infty} \frac{\left| \sum_{k=1}^N e^{2 \pi i Z_k} \right|}{\sqrt{2 N \log \log N}} = \sigma \qquad \textrm{a.s.} \]
holds with some constant $0<\sigma <\infty$ if and only if $\zeta_1$ is nondegenerate. A remarkable perturbation method first used by Schatte \cite{SCH2}, \cite{SCH3} allows more general test functions. Improving results of Schatte, in \cite{BB3} we showed that the sum $\sum_{k=1}^N f(Z_k)$ satisfies the CLT and the LIL for any function $f \in \mathcal{F}$, provided that the rate of weak convergence of $Z_k$ to uniformity in the Kolmogorov metric
\[ \psi (k) := \sup_{0 \le x \le 1} |\Pr ( \{ Z_k \} \le x) -x | \]
satisfies $\psi (k) \ll k^{-(1+\varepsilon)}$ with some $\varepsilon >0$; we conjecture that the assumption on the rate is optimal. Similar results are known for $p$-H\"older functions (resp.\ bounded Borel measurable functions) with the Kolmogorov metric replaced by the $p$-Wasserstein metric (resp.\ total variation metric); in fact, these hold on any compact group \cite{B1}, \cite{B2}.

The assumption $\psi (k) \ll k^{-(1+\varepsilon)}$ holds for a wide class of distributions (e.g.\ under Cram\'er's condition), but not when $\zeta_1$ is a real-valued lattice variable with finite expectation taken mod $\mathbb{Z}$, as in Theorems \ref{maintheorem} and \ref{Wienertheorem}. Indeed, in the latter case by the Markov inequality and the pigeonhole principle the distribution of $Z_k$ has an atom of weight $\gg k^{-1}$, and thus $\psi (k) \gg k^{-1}$; under a finite variance condition we even have $\psi (k) \gg k^{-1/2}$. The main goal of this paper is to establish quantitative ergodic theorems for such random walks. Instead of fast enough convergence in the Kolmogorov metric, the crucial assumption is $\E |X_1| < \infty$ and $\E X_1 \neq 0$; for technical reasons in Theorems \ref{maintheorem} and \ref{Wienertheorem} we assume the slightly stronger condition $X_1>0$ a.s.\ and $\E X_1 <\infty$. Proving the CLT \eqref{CLT} and the LIL \eqref{LIL} solely under a condition on the expected value presents considerable arithmetic and analytic difficulties; the proof of Theorems \ref{maintheorem} and \ref{Wienertheorem} is much more technical than under $\psi (k)\ll k^{-(1+\varepsilon)}$ in \cite{BB3}. We do not know whether these methods generalize to other compact groups.

From a broader perspective, our results fit into the subject of subsequences $\{ n_k \alpha \}$ of the classical $\{ n \alpha \}$ sequence. For given $n_k$ and $\alpha$, quantitative equidistribution results are notoriously difficult to prove, and are known only in very special cases. Considerable effort has been made to understand the case of a randomly chosen $\alpha$. R.\ Baker \cite{BA} showed that for any strictly increasing sequence of positive integers $n_k$, the mod $1$ discrepancy
\[ D_N (n_k \alpha ) = \sup_{0 \le a<b \le 1} \left| \sum_{k=1}^N I_{[a,b]} (\{ n_k \alpha \}) - N(b-a) \right| \]
satisfies $D_N (n_k \alpha ) \ll N^{1/2} (\log N)^{3/2+\varepsilon}$ for a.e.\ $\alpha$; this is known to be sharp up to factors of $\log N$. For a fixed $f \in \mathcal{F}$ with $\int_0^1 f(x) \, \mathrm{d}x =0$ Lewko and Radziwi\l\l \cite{LR} have recently improved this to
\begin{equation}\label{Lewko}
\left| \sum_{k=1}^N f(n_k \alpha ) \right| \ll (N \log N)^{1/2} (\log \log N)^{3/2+\varepsilon} \qquad \textrm{for a.e. } \alpha ,
\end{equation}
which is known to be sharp up to factors of $\log \log N$; see also \cite{ABS} and \cite{BW}. For lacunary sequences $n_k$ both $D_N (n_k \alpha )$ and $\sum_{k=1}^N f(n_k \alpha )$ with a fixed $f \in \mathcal{F}$ satisfy a sharp LIL for a.e.\ $\alpha$, see Philipp \cite{PH}.

In contrast, our results concern the case when $n_k$ is random and $\alpha$ is deterministic; note that under $X_1>0$ a.s., $\{ S_k \alpha \}$ is a random subsequence of $\{ n \alpha \}$. In a recent paper \cite{BB2} we found the discrepancy $D_N(S_k \alpha )$ up to logarithmic factors for a large class of distributions; an analogue of Baker's theorem. Theorems \ref{maintheorem} and \ref{Wienertheorem} in the present paper represent an improvement for a fixed $f \in \mathcal{F}$ similar to \eqref{Lewko}. In particular, for certain special distributions with $\E |X_1|< \infty$, $\E X_1 \neq 0$ and an irrational $\alpha$ satisfying $0< \liminf_{h \to \infty} h^{\gamma} \| h \alpha \| < \infty$ in \cite{BB2} we showed that $D_N (S_k \alpha)$ is, up to logarithmic factors, $N^{\max \{ 1/2, 1-1/\gamma \}}$ a.s. We thus have a transition from weak dependence to strong dependence at $\gamma =2$. We also mention that in \cite{BB3} we actually proved that $D_N (Z_k)$ satisfies the LIL and found the nondegenerate limit distribution of $N^{-1/2} D_N (Z_k)$ under the assumption $\psi (k) \ll k^{-(1+\varepsilon)}$; the assumptions of Theorem \ref{maintheorem}, however, seem not to be strong enough to find the sharp asymptotics of the discrepancy.

The growth rate of the integer sequence $S_k$ plays an important role in our setup as well. Indeed, under the condition $X_1>0$ a.s.\ and $\E X_1 < \infty$ of Theorems \ref{maintheorem} and \ref{Wienertheorem}, $S_k$ is a linearly increasing sequence of integers, and we have the CLT \eqref{CLT} and the LIL \eqref{LIL} with a fix $f \in \mathcal{F}$. If $X_1$ has heavy tails $\Pr (|X_1| \ge x) \sim c x^{-\beta}$ with some $0<\beta<1$ and $c>0$ instead, and $\inf_{h \in \mathbb{Z} \backslash \{ 0 \}} |h|^{\gamma} \| h \alpha \| >0$ with some $\gamma <1/\beta$, then by the results in \cite{BB1} we have $\psi (k) \ll k^{-1/(\beta \gamma )} \ll k^{-(1+\varepsilon)}$, and consequently the precise asymptotics of $D_N (S_k \alpha )$ is also known. Note that in this heavy-tailed case $S_k$ grows, in a stochastic sense, roughly at the rate $k^{1/\beta}$. In particular, for a random version of Philipp's LIL for the discrepancy, polynomial growth suffices instead of lacunarity.

\section{Proof of Theorems \ref{maintheorem} and \ref{Wienertheorem}}\label{ergodicsection}

Throughout this section $X_1, X_2, \dots$ is a sequence of i.i.d.\ nondegenerate integer-valued random variables with characteristic function $\varphi (x)=\E e^{ixX_1}$, and $S_k = \sum_{j=1}^k X_j$. Further, $e(x)=e^{2 \pi i x}$, and $\hat{f}(h)=\int_0^1 f(x) e(-hx) \, \mathrm{d}x$, $h \in \mathbb{Z}$ are the Fourier coefficients of $f$. Finally, $V(f)$ denotes the total variation of $f$ on $[0,1]$.

\subsection{A lemma on characteristic functions}

In this section we prove a technical lemma on the characteristic function $\varphi$. Let $\mathrm{supp} \, X_1 =\{ n \in \mathbb{Z} : \Pr (X_1=n)>0 \}$ denote the support of (the distribution of) $X_1$. Further, let $\mathrm{gcd} (A)$ denote the positive greatest common divisor, and $A-A=\{ a-b : a,b \in A \}$ the set of differences of a (finite or infinite) set $A \subseteq \mathbb{Z}$.

\begin{lem}\label{charfunctionlemma} Assume that $\E |X_1|< \infty$ and $\E X_1 \neq 0$, and let $d=\mathrm{gcd}\left( \mathrm{supp} \, X_1 \right)$ and $D=\mathrm{gcd}\left( \mathrm{supp}\, X_1 - \mathrm{supp}\, X_1 \right)$.
\begin{enumerate}
\item[(i)] For any integer $N \ge 1$ and any $x,y \in \mathbb{R}$ such that $d(x-y) \not\in \mathbb{Z}$,
\[ \left| \sum_{k=0}^{N-1} \varphi (2 \pi x)^k \varphi (2 \pi y)^{N-k-1} \right| \ll \frac{1}{\| d(x-y) \|} . \]
In particular, for any $x \in \mathbb{R}$ we have $|1-\varphi (2 \pi x)| \gg \| d x \|$.
\item[(ii)] For any positive integers $B,B'$ both divisible by $D/d$ such that $\min \{ B,B' \} \le \sqrt{B+B'}$, and any $x \in \mathbb{R}$ such that $Dx \not\in \mathbb{Z}$,
\[ \left| \frac{1-\varphi (2 \pi x)^B}{1-\varphi (2 \pi x)^{B+B'}} \right| \ll \min \{ B,B' \} . \]
\end{enumerate}
The implied constants in (i) and (ii) depend only on the distribution of $X_1$.
\end{lem}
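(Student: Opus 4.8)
The plan is to reduce everything to the characteristic function $g(u):=\E\, e(uY)$ of the integer variable $Y:=X_1/d$, for which $\varphi(2\pi x)=g(dx)$, $\mathrm{gcd}(\mathrm{supp}\,Y)=1$, $\E|Y|<\infty$, $\E Y\neq 0$, and $D':=\mathrm{gcd}(\mathrm{supp}\,Y-\mathrm{supp}\,Y)=D/d$. Two elementary facts about $g$ drive everything. First, since $1-|g(u)|^2=2\,\E\sin^2(\pi u(Y-Y'))$ for an independent copy $Y'$ of $Y$, and $\mathrm{supp}(Y-Y')$ contains finitely many integers of gcd $D'$, the bound $|\sin\pi t|\ge 2\|t\|$ gives $1-|g(u)|\gg\|D'u\|^2$ uniformly in $u$; consequently $|g(u)|=1$ exactly on $u\in\tfrac1{D'}\mathbb Z$, and $g(u)=1$ exactly on $u\in\mathbb Z$. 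Second, $\E|Y|<\infty$ makes $g$ of class $C^1$ with $g'(0)=2\pi i\,\E Y\neq 0$, so by continuity of $g'$ there is a radius $\sigma_0>0$ with $|g(u)-g(v)|\ge\tfrac12|g'(0)|\,|u-v|$ for $|u|,|v|\le\sigma_0$ and $|\arg g(u)|\asymp|u|$ for $|u|\le\sigma_0$. I also use $g(u+j/D')=\omega^{j}g(u)$, where $\omega:=e(n_0/D')$ (any $n_0\in\mathrm{supp}\,Y$) is a primitive $D'$-th root of unity, the primitivity coming from $\mathrm{gcd}(n_0,D')=1$, itself forced by $\mathrm{gcd}(\mathrm{supp}\,Y)=1$.

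For (i), put $a=g(dx)$, $b=g(dy)$, $\beta=\|d(x-y)\|\in(0,\tfrac12]$. If $\min(|a|,|b|)\le 1-c_0\beta$ for a suitable small $c_0$, say $|b|\le 1-c_0\beta$, then already $\bigl|\sum_{k=0}^{N-1}a^k b^{N-1-k}\bigr|\le\sum_{k=0}^{N-1}|b|^{N-1-k}\le(1-|b|)^{-1}\le(c_0\beta)^{-1}$. Otherwise $|a|,|b|>1-c_0\beta$, so by the first fact $dx$ and $dy$ lie within $O(\sqrt\beta)$ of points $j/D'$ and $j'/D'$; factoring out $\omega^{j},\omega^{j'}$ reduces $a-b$ (up to a unit) to $g(\sigma)-g(\tau)$ with $\sigma,\tau$ small. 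If $j=j'$ then $|\sigma-\tau|=\beta$ and the second fact gives $|a-b|\gg\beta$; if $j\neq j'$ then $a-b$ equals, up to $O(\sqrt\beta)$, a nonzero difference $\omega^{j}-\omega^{j'}$ of roots of unity, so $|a-b|\gg 1$. Either way $\bigl|\sum_{k=0}^{N-1}a^kb^{N-1-k}\bigr|=|a^N-b^N|/|a-b|\le 2/|a-b|\ll 1/\beta$. The ``in particular'' claim is the case $y=0$, $b=1$: if $|a|<1$, letting $N\to\infty$ in the bound just proved gives $|1-a|^{-1}\ll\|dx\|^{-1}$; if $|a|=1$ then $a$ is a nontrivial $D'$-th root of unity, so $|1-a|\gg 1\gg\|dx\|$.

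For (ii), let $j/D'$ be the point of $\tfrac1{D'}\mathbb Z$ nearest $dx$, set $\sigma=dx-j/D'$ (so $|\sigma|\le\tfrac1{2D'}$) and $w=g(\sigma)=\omega^{-j}\varphi(2\pi x)$; since $D'\mid B$ and $D'\mid(B+B')$ the powers of $\omega$ cancel, $\frac{1-\varphi(2\pi x)^{B}}{1-\varphi(2\pi x)^{B+B'}}=\frac{1-w^{B}}{1-w^{B+B'}}$, and $Dx\notin\mathbb Z$ forces $\sigma\neq 0$, hence $|w|<1$. If $|w|\le 1-\eta$ for a fixed $\eta$, then $|1-w^{B+B'}|\ge 1-|w|^{B+B'}\ge\eta$ while $|1-w^{B}|\le 2$, so the quotient is $O(1)$. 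If $|w|>1-\eta$, the first fact forces $|\sigma|\le\sigma_0$, and writing $1-|w|=\delta$ (with $|\sigma|^2\ll\delta\ll|\sigma|$) and $\theta=\|\tfrac1{2\pi}\arg w\|\asymp|\sigma|$, one checks: (a) $|1-w^{k}|\le(1-|w|^{k})+2|\sin(\pi k\theta)|\ll k|\sigma|$ for all $k\ge 1$; (b) $|1-w^{k}|\gg|\sigma|$ for all $k\ge 1$ --- if the nearest integer to $k\theta$ is $0$ then $k\theta\le\tfrac12$ and $|1-w^{k}|\asymp k\theta\ge\theta$, while if it is $\ge 1$ then $k\gg 1/\theta$, hence $k\delta\gg\theta$ and $1-|w|^{k}\gg\theta$; and (c) $|1-w^{B+B'}|\ge|1-w^{B}|-|1-w^{B'}|$ since $|w^{B}|\le 1$. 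Then: if $B=\min(B,B')$, (a) bounds the numerator by $\ll B|\sigma|$ and (b) bounds the denominator below by $\gg|\sigma|$, so the quotient is $\ll\min(B,B')$. If $B=\max(B,B')$, then either $|1-w^{B'}|\le\tfrac12|1-w^{B}|$, whence (c) gives denominator $\ge\tfrac12|1-w^{B}|$ and quotient $\le 2$, or $|1-w^{B}|<2|1-w^{B'}|\ll B'|\sigma|=\min(B,B')|\sigma|$ by (a), and (b) again bounds the quotient by $\ll\min(B,B')$. The hypothesis $\min(B,B')\le\sqrt{B+B'}$ is what keeps these exponents in balance.

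The main obstacle, in both parts, is the ``resonance'' regime where $|\varphi(2\pi x)|$ --- and in (i) also $|\varphi(2\pi y)|$ --- is extremely close to $1$: there the geometric-series identities alone say nothing, and one must descend to the local behaviour of $g$ near $0$ through $g(u+j/D')=\omega^{j}g(u)$ and use simultaneously that $g'(0)=2\pi i\,\E Y\neq 0$ and that $1-|g(u)|\gg\|D'u\|^2$. Making every implied constant uniform --- independent of $B,B',x,y$ and, in (i), of $N$ --- while juggling these local and arithmetic inputs (in particular choosing $c_0,\eta$ small enough in terms of $\sigma_0$, $D'$, $\E|Y|$, $|g'(0)|$ and $\min_{j}|1-\omega^{j}|$) is the technical heart of the argument.
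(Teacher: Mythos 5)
Your proposal is correct. For part (i) it follows essentially the paper's route: reduce to $d=1$, use the quadratic bound $1-|\varphi(2\pi x)|\gg\|Dx\|^2$ away from the points $n/D$, and near those points use $\varphi(2\pi(u+n/D))=\omega^n\varphi(2\pi u)$ together with $\varphi'(0)=2\pi i\,\E X_1\neq 0$ to get $|\varphi(2\pi x)-\varphi(2\pi y)|\gg\|x-y\|$; your Case~1 (bounding the sum termwise by $(1-\min(|a|,|b|))^{-1}\le (c_0\beta)^{-1}$ whenever one modulus is below $1-c_0\beta$) is a slightly cleaner way to dispose of the non-resonant regimes than the paper's three-way split involving the sets $J$ and $U$, and it also sidesteps the degenerate case $\varphi(2\pi x)=\varphi(2\pi y)$ of the closed form. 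For part (ii) you take a genuinely different route: the paper linearizes via $1-z^m=m(1-z)+O(m|1-z|)$ for $|1-z|\le 1/m$ and runs a case analysis on $\|Dx\|$ against $(B+B')^{-1}$ and $(B+B')^{-1/2}$, and it is precisely there that the hypothesis $\min\{B,B'\}\le\sqrt{B+B'}$ is needed to guarantee $|1-\omega^{-n}\varphi(2\pi x)|<1/B$; you instead prove the two uniform bounds $|1-w^k|\ll k|\sigma|$ and, crucially, $|1-w^k|\gg|\sigma|$ for \emph{all} $k\ge 1$ (using $1-|w|\gg\sigma^2$ to handle the resonance $k\theta$ near a nonzero integer), plus the triangle inequality $|1-w^{B+B'}|\ge|1-w^B|-|1-w^{B'}|$. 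This buys a stronger statement: your argument never uses $\min\{B,B'\}\le\sqrt{B+B'}$, so the bound $\ll\min\{B,B'\}$ holds for all $B,B'$ divisible by $D/d$; your closing remark that this hypothesis ``keeps the exponents in balance'' does not correspond to anything in your proof and can be deleted. Two small expository points worth tightening: in the ``nearest integer $0$'' case of your claim (b) the assertion $|1-w^k|\asymp k\theta$ needs the observation that there $k\delta\ll k\theta\ll 1$, hence $|w|^k\gg 1$ (or simply settle for the lower bound, splitting off the trivial case $|w|^k\le 1/2$); and the deduction $1-|g(u)|\gg\|D'u\|^2$ from $\E\sin^2(\pi u(Y-Y'))$ needs the B\'ezout step $\max_i\|um_i\|\gg\|D'u\|$ for finitely many support points $m_i$ of $Y-Y'$ with $\gcd(m_i)=D'$, which you use implicitly.
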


\begin{proof} Throughout this proof constants and implied constants depend only on the distribution of $X_1$. Replacing $X_1, X_2, \dots$ by $X_1/d, X_2/d, \dots$, we may assume that $d=1$. In particular, the smallest period of $\varphi (2 \pi x)$ is $1$. Note that $X_1-X_2$ has characteristic function $|\varphi|^2$, and $\mathrm{supp} (X_1-X_2)=\mathrm{supp}\, X_1 - \mathrm{supp}\, X_1$. It is thus easy to deduce that $|\varphi (2 \pi x)|=1$ if and only if $x=n/D$ with some integer $n$. In fact, we have the estimate
\begin{equation}\label{phiatnp2}
1-|\varphi (2 \pi x)| \gg \| Dx \|^2 .
\end{equation}
Indeed, note that
\[ 1-|\varphi (2 \pi x)| \ge \frac{1-|\varphi (2 \pi x)|^2}{2} = \E \sin^2 ( \pi x (X_1-X_2)) . \]
Since $\sin^2 (\pi x (X_1-X_2)) \ge 4 x^2 (X_1-X_2)^2$ whenever $|x(X_1-X_2)| \le 1/2$, we get
\[ 1-|\varphi (2 \pi x)| \ge 4 x^2 \E \left( (X_1-X_2)^2 I_{\left\{ |X_1-X_2| \le 1/(2|x|) \right\}} \right) . \]
Here $\E (X_1-X_2)^2 >0$ (possibly infinite) as $X_1$ is nondegenerate, therefore \eqref{phiatnp2} holds in an open neighborhood of $0$; by periodicity and continuity \eqref{phiatnp2} holds for all $x \in \mathbb{R}$.

Furthermore, there exists an integer $a$ such that $X_1 \equiv a \pmod{D}$ a.s., and the assumption $d=1$ ensures that $a$ and $D$ are relatively prime. Hence for any integer $n$ we have $e(nX_1/D)=\omega^n$ a.s. with the primitive $D$th root of unity $\omega=e(a/D)$. This means that within a period $x \in [0,1)$ the curve $\varphi (2 \pi x)$ touches the unit circle at each $D$th root of unity exactly once; moreover, the derivative at these points is nonzero. Indeed, we have
\begin{equation}\label{phiatnp}
\begin{split} \varphi (2 \pi n/D) &= \E e(nX_1/D)=\omega^n, \\ \varphi' (2 \pi n/D) &= \E \left( i X_1 e(nX_1/D) \right) = i \omega^n \E X_1 \neq 0. \end{split}
\end{equation}

We now prove (i). Note that
\[ \sum_{k=0}^{N-1} \varphi (2 \pi x)^k \varphi (2 \pi y)^{N-k-1} = \left\{ \begin{array}{ll} \frac{\varphi (2 \pi x)^N-\varphi (2 \pi y)^N}{\varphi (2 \pi x)-\varphi (2 \pi y)} & \mathrm{if} \,\,\, \varphi (2 \pi x) \neq \varphi (2 \pi y) , \\ N \varphi (2 \pi x)^{N-1} & \mathrm{if} \,\,\, \varphi (2 \pi x)=\varphi (2 \pi y) . \end{array} \right. \]
The derivative $\varphi'$ is uniformly continuous, which in turn ensures that the convergence
\[ \frac{\varphi (2 \pi x)-\varphi (2 \pi y)}{2 \pi x - 2 \pi y} \to \varphi' (2 \pi x) \]
is uniform as $|x-y| \to 0$. Thus it is not difficult to see that there exists a constant $\delta>0$ such that for any $x,y \in J=\bigcup_{n \in \mathbb{Z}}(n/D-\delta, n/D+\delta)$ we have
\begin{equation}\label{varphibound}
\left| \varphi (2 \pi x) - \varphi (2 \pi y) \right| \gg \| x-y \| .
\end{equation}
Since $|\varphi (2 \pi x)| \neq 1$ outside $J$, using the compactness of $[0,1]\backslash J$ and the periodicity of $\varphi$ we also have $|\varphi (2 \pi x)| \le 1-r$ for every $x \in \mathbb{R} \backslash J$ with some constant $r>0$. Let $U=\{ x \in \mathbb{R} : |\varphi (2 \pi x)|>1-r/2 \} \subseteq J$.

Now let $x,y \in \mathbb{R}$ be such that $x-y \not\in \mathbb{Z}$. If $x,y \in U$, then by \eqref{varphibound} we have
\begin{equation}\label{xyinU}
\left| \sum_{k=0}^{N-1} \varphi (2 \pi x)^k \varphi (2 \pi y)^{N-k-1} \right| \le \frac{2}{\left| \varphi (2 \pi x) - \varphi (2 \pi y) \right|} \ll \frac{1}{\| x-y \|} .
\end{equation}
If $x,y \not\in U$, then $|\varphi (2 \pi x)| \le 1-r/2$ and $|\varphi (2 \pi y)| \le 1-r/2$, hence
\[ \left| \sum_{k=0}^{N-1} \varphi (2 \pi x)^k \varphi (2 \pi y)^{N-k-1} \right| \le N \left( 1-\frac{r}{2} \right)^{N-1} \ll 1 \le \frac{1}{\| x-y \|}. \]
Finally, suppose that, say, $x \in U$ and $y \not\in U$. If $y \in J$, then \eqref{varphibound} and \eqref{xyinU} still hold. If $y \not\in J$, then $|\varphi (2 \pi x)| >1-r/2$ and $|\varphi (2 \pi y)| \le 1-r$, therefore
\[ \left| \sum_{k=0}^{N-1} \varphi (2 \pi x)^k \varphi (2 \pi y)^{N-k-1} \right| \le \frac{2}{\left| \varphi (2 \pi x) - \varphi (2 \pi y) \right|} < \frac{4}{r} \ll 1 \le \frac{1}{\| x-y \|}. \]
This finishes the proof of the first claim in (i). The second claim in (i) follows e.g.\ from setting $y=0$ and letting $N \to \infty$ in the first claim (although it would not be difficult to give a direct proof).

To prove (ii), first note that $B$ and $B'$ play almost symmetric roles; that is,
\[ \left| \frac{1-\varphi (2 \pi x)^B}{1-\varphi (2 \pi x)^{B+B'}} \right| \le 1+\left| \frac{1-\varphi (2 \pi x)^{B'}}{1-\varphi (2 \pi x)^{B+B'}} \right| . \]
Thus we may assume that $B \le B'$, and so $B \le \sqrt{B+B'}$. From \eqref{phiatnp} we deduce the asymptotics
\begin{equation}\label{phiasymptotics}
\varphi (2 \pi x) = \omega^n  +2 \pi i \omega^n \E X_1 \left( x-\frac{n}{D} \right) \left( 1+o(1) \right) \quad \mathrm{as} \,\,\, x \to \frac{n}{D}.
\end{equation}
Let, say, $K= 3 \pi |\E X_1|$, and recall \eqref{phiatnp2}. There exists a constant $\delta' >0$ such that for any $x \in J'= \bigcup_{n \in \mathbb{Z}}(n/D-\delta', n/D+\delta')$ the $o(1)$ term in \eqref{phiasymptotics} has absolute value less than $1/2$, and so
\begin{equation}\label{phiestimates}
\begin{split} | \varphi (2 \pi x) -\omega^n | &\le K \left| x-\frac{n}{D} \right|, \\ |\varphi (2 \pi x)| & \le 1- L \left( x- \frac{n}{D} \right)^2 , \end{split}
\end{equation}
with some constant $L>0$, where $n=n(x)$ is the integer for which $|x-n/D|< \delta'$. We may assume $K \delta' <1$. As in the proof of (i), for any $x \in \mathbb{R} \backslash J'$ we have $|\varphi (2 \pi x)| \le 1-r'$ with some constant $r'>0$. To proceed, we will also need the simple estimate
\begin{equation}\label{zestimate}
1-z^m=m(1-z)+O(m|1-z|) \qquad (m \in \mathbb{N}, \,\,\, z \in \mathbb{C}, \,\,\, |1-z|\le 1/m) ,
\end{equation}
which holds with implied constant $e-2$. (This can be seen e.g.\ by expanding $(1+(z-1))^m$.)

Now let $x \in \mathbb{R}$ be such that $Dx \not\in \mathbb{Z}$. If $x \in \mathbb{R} \backslash J'$, then $|1-\varphi (2 \pi x)^{B+B'}| \ge r'$, and we are done. If $\delta'/ \sqrt{B+B'} \le |x-n/p|<\delta'$ for some integer $n$, then from \eqref{phiestimates} we get
\[ \begin{split} |1-\varphi (2 \pi x)^{B+B'}| \ge 1-|\varphi (2 \pi x)|^{B+B'} &\ge 1-\left( 1-L \left( x-\frac{n}{D} \right)^2 \right)^{B+B'} \\ &\ge 1-e^{-L (x-n/D)^2 (B+B')} \\ &\ge 1-e^{-L (\delta' )^2},  \end{split} \]
and again we are done.

Next, assume $\delta'/(B+B') \le |x-n/D|<\delta'/\sqrt{B+B'}$ for some integer $n$. From \eqref{phiestimates} we similarly get
\[ |1-\varphi (2 \pi x)^{B+B'}| \ge 1-e^{-L(x-n/D)^2(B+B')} \ge 1-e^{-L\delta' |x-n/D|} \gg \left| x-\frac{n}{D} \right|.  \]
From \eqref{phiestimates} we also have $|\varphi (2 \pi x)-\omega^n| \le K|x-n/D|<K \delta'/\sqrt{B+B'}<1/B$. Hence \eqref{zestimate} with $z=\omega^{-n}\varphi (2 \pi x)$ and $m=B$ gives
\[ |1-\omega^{-nB} \varphi (2 \pi x)^B| \le (e-1)B|1-\omega^{-n}\varphi (2 \pi x)| \ll B \left| x-\frac{n}{D} \right| . \]
Here $\omega^{-nB}=1$ because $B$ was assumed to be divisible by $D$. Therefore we have $|1-\varphi (2 \pi x)^B|/|1-\varphi (2 \pi x)^{B+B'}| \ll B$, and we are done.

Finally, assume $0<|x-n/D|< \delta'/(B+B')$ for some integer $n$. Then \eqref{phiestimates} gives the stronger bound $|\varphi (2 \pi x)-\omega^n| \le K|x-n/D|<1/(B+B')$. Applying \eqref{zestimate} with $z=\omega^{-n}\varphi (2 \pi x)$ and $m=B$, $m=B+B'$, respectively, we get
\[ \begin{split} |1-\omega^{-nB} \varphi (2 \pi x)^B| &\le (e-1)B|1-\omega^{-n}\varphi (2 \pi x)|, \\ |1-\omega^{-n(B+B')} \varphi (2 \pi x)^{B+B'}| &\ge (3-e)(B+B')|1-\omega^{-n}\varphi (2 \pi x)| .  \end{split} \]
Here $\omega^{-nB}=\omega^{-n(B+B')}=1$, therefore $|1-\varphi (2 \pi x)^B|/|1-\varphi (2 \pi x)^{B+B'}| \ll B/(B+B')$, and we are done.
\end{proof}

\subsection{An exponential sum}

In this section we approximate a general sum involving $f$ by an exponential sum.
\begin{lem}\label{koksmaineq}
For any $f \in \mathcal{F}$, any $x_1, x_2, \dots, x_N \in \mathbb{R}$ and any $y \in [-1/2,1/2]$,
\[ \left| \sum_{k=1}^N f(x_k)-\sum_{k=1}^N f(x_k-y) \right| \le V(f) \left( \sup_{\lambda (J)=|y|} \sum_{k=1}^N I_J^*(x_k) - \inf_{\lambda (J)=|y|} \sum_{k=1}^N I_J^*(x_k) \right) , \]
where the supremum and the infimum are over all intervals $J \subseteq \mathbb{R}$ of length $\lambda (J)=|y|$, and $I_J^*(x)=\sum_{n \in \mathbb{Z}}I_J(x+n)$ denotes the indicator of $J$ extended with period $1$.
\end{lem}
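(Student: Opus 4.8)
The plan is to write $f$ in terms of its Lebesgue--Stieltjes measure on the circle and convert the difference $f(x_k)-f(x_k-y)$ into a count of points in an interval of length $|y|$.

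First I would reduce to $0\le y\le 1$. Only $|y|$ enters the asserted bound, and if $y<0$ the substitution $x_k\mapsto x_k+|y|$ translates all the points $x_k$ and all intervals $J$ simultaneously, leaving the supremum and infimum on the right-hand side unchanged while the left-hand side merely changes sign; so one may assume $0\le y\le 1$. Let $\mu$ be the signed Lebesgue--Stieltjes measure on $\mathbb{R}/\mathbb{Z}$ associated with $f$ (say, with its right-continuous representative). Two properties of $\mu$ are used: $|\mu|(\mathbb{R}/\mathbb{Z})\le V(f)$, and, because $f$ is $1$-periodic, $\mu(\mathbb{R}/\mathbb{Z})=0$. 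Since $t\in(x-y,x]\pmod 1$ exactly when $x\in[t,t+y)\pmod 1$, for every $x\in\mathbb{R}$ one has
\[ f(x)-f(x-y)=\mu\big((x-y,x]\bmod 1\big)=\int_{\mathbb{R}/\mathbb{Z}}I^*_{[t,t+y)}(x)\,\mathrm{d}\mu(t). \]

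Now I would sum over $k$. Put $N(t):=\sum_{k=1}^N I^*_{[t,t+y)}(x_k)$, the number of $x_1,\dots,x_N$ lying, mod $1$, in the interval $[t,t+y)$. As $t$ runs over $\mathbb{R}$, $[t,t+y)$ is one of the intervals of length $|y|$ appearing in the supremum and infimum on the right-hand side of the lemma, so $m\le N(t)\le M$ for every $t$, where $M$ and $m$ denote that supremum and infimum. Using the displayed identity and then $\mu(\mathbb{R}/\mathbb{Z})=0$,
\[ \sum_{k=1}^N\big(f(x_k)-f(x_k-y)\big)=\int_{\mathbb{R}/\mathbb{Z}}N(t)\,\mathrm{d}\mu(t)=\int_{\mathbb{R}/\mathbb{Z}}\big(N(t)-m\big)\,\mathrm{d}\mu(t). \]
Since $0\le N(t)-m\le M-m$ for all $t$, the right-hand side is at most $(M-m)\,|\mu|(\mathbb{R}/\mathbb{Z})\le (M-m)\,V(f)$ in absolute value, which is the claim. (Subtracting $(M+m)/2$ in place of $m$ would even give the constant $V(f)(M-m)/2$.)

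The one point needing care is the passage from $f$ to $\mu$: one has to check that, for a general $f\in\mathcal{F}$, the difference $f(x)-f(x-y)$ is the $\mu$-mass of the half-open arc $(x-y,x]$ for a measure $\mu$ of total mass zero with $|\mu|(\mathbb{R}/\mathbb{Z})\le V(f)$. This is routine after replacing $f$ by a one-sided continuous representative (the countably many discontinuities do not affect the estimate), or, alternatively, one may first approximate $f$ in variation by $1$-periodic step functions, for which $f(x)-f(x-y)$ is a finite signed combination of arc indicators and the inequality is immediate. I expect this bookkeeping to be the only real obstacle; the rest is the short computation above.
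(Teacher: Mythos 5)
Your core computation is correct and takes a genuinely different route from the paper: the paper applies a Koksma-type inequality for arbitrary distributions, namely \eqref{randomkoksma} (quoted from \cite{BB3}), to the two empirical measures $N^{-1}\sum_k\delta_{x_k}$ and $N^{-1}\sum_k\delta_{x_k-y}$, and then recognizes the difference of their distribution functions as a difference of counts in two intervals of length $|y|$; you instead integrate the counting function $N(t)$ directly against the Lebesgue--Stieltjes measure of $f$ on the circle and use $\mu(\mathbb{R}/\mathbb{Z})=0$ to recentre by the infimum. Your argument is self-contained (no appeal to an external lemma) and, as you observe, even yields the sharper constant $V(f)(M-m)/2$.

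The one real issue is precisely the step you label as bookkeeping, and neither of your proposed justifications works as stated. Replacing $f$ by a one-sided continuous representative $\tilde f$ \emph{does} change the left-hand side whenever one of the $2N$ points $\{x_k\},\{x_k-y\}$ is a discontinuity point with $f\neq\tilde f$ there -- the estimate concerns exactly these points, so ``countably many discontinuities do not affect the estimate'' is not a valid reason; and trying to absorb the discrepancy by splitting $f=\tilde f+(f-\tilde f)$ can lose the constant, since $V(\tilde f)+V(f-\tilde f)$ may exceed $V(f)$ (e.g.\ $f=0$ on $[0,1/2)$, $f(1/2)=1/2$, $f=1$ on $(1/2,1)$, extended periodically, has $V(f)=2$ but $V(\tilde f)+V(f-\tilde f)=3$). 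Likewise, a general $f\in\mathcal{F}$ cannot be approximated in variation by step functions: if $f$ is continuous and nonconstant, then $V(f-s)\ge V(f)$ for every step function $s$. The repair is cheap and standard: let $P\subseteq\mathbb{R}/\mathbb{Z}$ be the finite set consisting of the points $x_k$ and $x_k-y$ mod $1$, and let $s$ be the $1$-periodic step function that equals $f$ at every point of $P$ and is constant on each arc between consecutive points of $P$ (equal to the value of $f$ at the left endpoint of that arc). Then $s$ is right-continuous, agrees with $f$ at all $2N$ relevant points, so the left-hand side is unchanged, and $V(s)=\sum_i|f(p_{i+1})-f(p_i)|\le V(f)$; running your measure computation on $s$ verbatim then gives the lemma for $f$. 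With that substitution your proof is complete.
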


\begin{proof} For any $f \in \mathcal{F}$ and any random variables $X$ and $Y$,
\begin{equation}\label{randomkoksma}
|\E f(X) - \E f(Y)| \le V(f) \sup_{0 \le x \le 1} |\Pr (\{ X \} \le x) - \Pr (\{ Y \} \le x)| ,
\end{equation}
where $\{ \cdot \}$ denotes fractional part. This fact is usually stated when the distribution of $X$ is finitely supported with equal weights, and $Y$ is uniformly distributed on $[0,1]$; see Koksma's inequality \cite[p. 143]{KN}. The general case formally follows from integration by parts; for a detailed proof see \cite[Lemma 1]{BB3}.

Let us apply \eqref{randomkoksma} to the random variables $X$ and $Y$ with distribution $1/N \sum_{k=1}^N \delta_{x_k}$ and $1/N \sum_{k=1}^N \delta_{x_k-y}$, respectively. If $0 \le y \le 1/2$, then for any $x \in [0,1]$ we have
\[ \begin{split} |\Pr (\{ X \} \le x)-\Pr (\{ Y \} \le x)| &= \frac{1}{N} \left| \sum_{k=1}^N \left( I_{[0,x]}(\{ x_k \} )-I_{[0,x]}(\{ x_k-y \}) \right) \right| \\ &= \frac{1}{N} \left| \sum_{k=1}^N \left( I_{[0,y)}^*(x_k)-I_{(x,x+y]}^*(x_k) \right) \right| \\ &\le \frac{1}{N} \left( \sup_{\lambda (J)=|y|} \sum_{k=1}^N I_J^*(x_k) - \inf_{\lambda (J)=|y|} \sum_{k=1}^N I_J^*(x_k) \right) . \end{split} \]
A similar argument shows that the same holds if $-1/2 \le y \le 0$, and the claim follows.
\end{proof}

\begin{lem}\label{fourierapprox}
Let $f \in \mathcal{F}$, and let $x_1, x_2, \dots, x_N \in \mathbb{R}$ be such that $\| x_k-x_{\ell} \| \ge r>0$ for all $k \neq \ell$. For any integer $H>1$,
\[ \sum_{k=1}^N f(x_k) = \sum_{|h|<H} \left( 1-\frac{|h|}{H} \right)\hat{f}(h) \sum_{k=1}^N e^{2 \pi i h x_k} + O \left( V(f) \left( \frac{\log H}{rH}+1 \right) \right) \]
with a universal implied constant.
\end{lem}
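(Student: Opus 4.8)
The plan is to recognise the main term as the Fejér-smoothing of $f$ sampled at the points $x_k$, and then to estimate the smoothing error via Lemma~\ref{koksmaineq} together with the separation of the $x_k$. To this end I would introduce the Fejér kernel $F_H(t)=\sum_{|h|<H}\bigl(1-\tfrac{|h|}{H}\bigr)e^{2\pi i h t}=\tfrac1H\bigl(\tfrac{\sin \pi H t}{\sin \pi t}\bigr)^2\ge 0$, which satisfies $\int_0^1 F_H=1$ and, using $|\sin\pi t|\ge 2|t|$ on $[-\tfrac12,\tfrac12]$, the decay bound $F_H(t)\ll\min\{H,\,1/(Ht^2)\}$ there. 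Interchanging the two finite sums, the main term on the right-hand side is exactly $\sum_{k=1}^N (f*F_H)(x_k)$, where $(f*F_H)(x)=\int_0^1 f(x-t)F_H(t)\,\mathrm dt=\sum_{|h|<H}(1-|h|/H)\hat f(h)e^{2\pi i hx}$; the second equality is just the computation of the Fourier coefficients of a convolution, legitimate since $f$ is bounded (being of bounded variation), and both sides are continuous in $x$. By $1$-periodicity of $F_H$ and of $t\mapsto f(x-t)$ the convolution integral may be taken over $[-\tfrac12,\tfrac12]$.

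The error is then $\sum_{k=1}^N\bigl(f(x_k)-(f*F_H)(x_k)\bigr)$, and since $\int_{-1/2}^{1/2}F_H=1$ this equals, by Fubini,
\[
\int_{-1/2}^{1/2}\Bigl(\sum_{k=1}^N\bigl(f(x_k)-f(x_k-t)\bigr)\Bigr)F_H(t)\,\mathrm dt .
\]
For each fixed $t\in[-\tfrac12,\tfrac12]$, Lemma~\ref{koksmaineq} bounds the inner sum in absolute value by $V(f)\,\Delta(t)$, where $\Delta(t)=\sup_{\lambda(J)=|t|}\sum_k I_J^*(x_k)-\inf_{\lambda(J)=|t|}\sum_k I_J^*(x_k)$ is taken over intervals $J$ of length $|t|$.

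Next I would use the separation to bound $\Delta(t)$: if an interval of length $|t|\le\tfrac12$ contains $m$ of the $x_k$ modulo $1$, then lifting these points into that interval gives $m$ numbers that are pairwise at distance $\ge r$ (note $\| x_k-x_\ell\|=|x_k-x_\ell\ \mathrm{mod}\ 1|$ once both lie in the interval) and contained in an interval of length $|t|$, so $(m-1)r\le|t|$, i.e.\ $m\le |t|/r+1$; since $\Delta(t)\ge0$ as well, $\Delta(t)\le|t|/r+1$. Hence the error is at most
\[
V(f)\int_{-1/2}^{1/2}\Bigl(\tfrac{|t|}{r}+1\Bigr)F_H(t)\,\mathrm dt=V(f)\Bigl(1+\tfrac1r\int_{-1/2}^{1/2}|t|F_H(t)\,\mathrm dt\Bigr),
\]
and the decay bound yields $\int_{-1/2}^{1/2}|t|F_H(t)\,\mathrm dt\ll\int_0^{1/H}tH\,\mathrm dt+\int_{1/H}^{1/2}\tfrac{\mathrm dt}{Ht}\ll(\log H)/H$ for $H\ge2$, which is exactly the claimed error $O\bigl(V(f)\bigl(\tfrac{\log H}{rH}+1\bigr)\bigr)$ with a universal implied constant.

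I expect the argument to be essentially routine. The only points needing a little care are folding the convolution onto $[-\tfrac12,\tfrac12]$ so that Lemma~\ref{koksmaineq} applies verbatim, and the Fejér moment estimate $\int_{-1/2}^{1/2}|t|F_H(t)\,\mathrm dt\ll(\log H)/H$ — the logarithm here is genuine and is precisely what produces the $\log H$ in the statement.
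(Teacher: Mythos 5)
Your proposal is correct and follows essentially the same route as the paper's proof: Fejér-kernel smoothing, writing the error as $\int_{-1/2}^{1/2}\bigl(\sum_k f(x_k)-\sum_k f(x_k-y)\bigr)F_H(y)\,\mathrm{d}y$, bounding the integrand via Lemma \ref{koksmaineq} and the pigeonhole/separation bound $|y|/r+1$, and finishing with $\int_{-1/2}^{1/2}|y|F_H(y)\,\mathrm{d}y\ll \log H/H$. The extra details you supply (folding onto $[-1/2,1/2]$, the kernel decay estimate) are fine and consistent with the paper.
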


\begin{proof} Let $F_H(x)=\sum_{|h|<H} \left( 1-|h|/H \right) e(hx)$ denote the Fej\'er kernel, and recall the convolution identity
\[ \sum_{|h|<H} \left( 1-\frac{|h|}{H} \right) \hat{f}(h) e(hx) = \int_{-1/2}^{1/2} f(x-y) F_H (y) \, \mathrm{d}y . \]
Applying this with $x=x_1, x_2, \dots, x_N$ and using the fact that the total integral of $F_H$ on $[-1/2,1/2]$ is $1$, the error term in the claim can be written in the explicit form
\[ \int_{-1/2}^{1/2} \left( \sum_{k=1}^N f(x_k) - \sum_{k=1}^N f(x_k-y) \right) F_H (y) \, \mathrm{d}y . \]
The assumption $\| x_k-x_{\ell} \| \ge r$, $k \neq \ell$ and the pigeonhole principle imply that the periodic extension of any interval of length $|y|$ contains at most $|y|/r +1$ of the points $x_1, x_2, \dots, x_N$. According to Lemma \ref{koksmaineq} we thus have
\[ \left| \sum_{k=1}^N f(x_k)-\sum_{k=1}^N f(x_k-y) \right| \le V(f) \left( \frac{|y|}{r} +1 \right) . \]
The claim then follows from the estimate $\int_{-1/2}^{1/2}|y|F_H(y) \, \mathrm{d}y \ll \log H /H$, which can be seen directly from the definition of $F_H$.
\end{proof}

\subsection{The variance}\label{variancesection}

In this section we prove two lemmas closely related to the variance of $\sum_{k=1}^N f(S_k \alpha)$. In particular, we find the variance of the corresponding stationary process $\sum_{k=1}^N f(U+S_k \alpha)$, and prove the properties of the constant $C(\alpha, f)$. At the end of the section we prove the remarks made in the Introduction about the rational case.

We will need the fact that the distance from the nearest integer function is symmetric and subadditive; that is, $\|-x\|= \| x \|$ and $\| x+y \| \le \| x \| + \| y \|$ for any $x,y \in \mathbb{R}$. Further, we will need the classical Diophantine estimate which states that for any irrational $\alpha$ such that $\inf_{h \in \mathbb{Z}\backslash \{ 0 \}} |h|^{\gamma} \| h \alpha \| >0$ and any integer $H>1$,
\begin{equation}\label{diophestimate}
\sum_{h=1}^H \frac{1}{h \| h \alpha \|} \ll \left\{ \begin{array}{ll} \log^2 H & \mathrm{if} \,\,\, \gamma =1, \\ H^{\gamma -1} & \mathrm{if} \,\,\, \gamma >1 \end{array} \right.
\end{equation}
with implied constants depending only on $\alpha$ and $\gamma$. A similar estimate claims $\sum_{h=1}^H 1/(h^2 \| h \alpha \|^2) \ll \log H$ if $\gamma =1$. For a detailed proof of \eqref{diophestimate} we refer to \cite[Corollary 4.3]{BB2}; for related results on Diophantine sums see \cite[Chapter 2]{KN}.

\begin{lem}\label{expsummoment}
Assume that $\E |X_1|< \infty$ and $\E X_1 \neq 0$, and let $\alpha$ be irrational such that $\inf_{h \in \mathbb{Z}\backslash \{ 0 \}} |h|^{\gamma} \| h \alpha \| >0$ with some constant $\gamma \ge 1$. Further, let $H>1$ be an integer, and let $c_h \in \mathbb{C}$, $0<|h|<H$ be a finite sequence such that $|c_h| \le 1/|h|$ for every $h$. For any integers $M \ge 0$ and $N \ge 1$,
\[ \begin{split} \E \left| \sum_{0<|h|<H} c_h \sum_{k=M+1}^{M+N} e^{2 \pi i h S_k \alpha} \right|^2 = &N \sum_{0<|h|<H} |c_h|^2 \frac{1-|\varphi (2 \pi h \alpha )|^2}{|1-\varphi (2 \pi h \alpha )|^2} \\ &+ \left\{ \begin{array}{ll} O \left( \log^4 H \right) & \mathrm{if} \,\,\, \gamma =1, \\ O \left( H^{2 \gamma -2} \right) & \mathrm{if} \,\,\, \gamma >1 \end{array} \right. \end{split} \]
with implied constants depending only on the distribution of $X_1$, $\alpha$ and $\gamma$.
\end{lem}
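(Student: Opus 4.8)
The plan is to expand the square, evaluate the resulting expectations through the independence of the increments of $S_k$, sum the geometric series that arise, and then rearrange the outcome so that the asserted main term comes entirely out of the diagonal $h=h'$, while every other contribution is an error term controlled by the Diophantine estimates \eqref{diophestimate}. Writing $\varphi_h=\varphi(2\pi h\alpha)$, expanding the square gives
\[ \E\left|\sum_{0<|h|<H}c_h\sum_{k=M+1}^{M+N}e^{2\pi ihS_k\alpha}\right|^2=\sum_{0<|h|,|h'|<H}c_h\overline{c_{h'}}\sum_{k,\ell=M+1}^{M+N}\E\,e^{2\pi i(hS_k-h'S_\ell)\alpha}. \]
When $k\le\ell$ we have $hS_k-h'S_\ell=(h-h')S_k-h'(S_\ell-S_k)$ with $S_\ell-S_k$ independent of $S_k$, hence $\E\,e^{2\pi i(hS_k-h'S_\ell)\alpha}=\varphi_{h-h'}^{\,k}\overline{\varphi_{h'}}^{\,\ell-k}$, and when $k\ge\ell$ this expectation equals $\varphi_{h-h'}^{\,\ell}\varphi_h^{\,k-\ell}$. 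Summing the finite geometric series in $\ell-k$ and in $k-\ell$ (legitimate since $\varphi_h\ne1\ne\varphi_{h'}$, as $h,h'\ne0$) and using the identity $\frac{1}{1-\overline{\varphi_{h'}}}+\frac{\varphi_h}{1-\varphi_h}=\frac{1-\varphi_h\overline{\varphi_{h'}}}{(1-\varphi_h)(1-\overline{\varphi_{h'}})}$, the inner double sum over $k,\ell$ becomes
\[ \frac{1-\varphi_h\overline{\varphi_{h'}}}{(1-\varphi_h)(1-\overline{\varphi_{h'}})}\sum_{k=M+1}^{M+N}\varphi_{h-h'}^{\,k}-\frac{1}{1-\overline{\varphi_{h'}}}\sum_{k=M+1}^{M+N}\varphi_{h-h'}^{\,k}\overline{\varphi_{h'}}^{\,M+N+1-k}-\frac{1}{1-\varphi_h}\sum_{k=M+1}^{M+N}\varphi_{h-h'}^{\,k}\varphi_h^{\,M+N+1-k}. \]
Multiplying these three pieces by $c_h\overline{c_{h'}}$ and summing over $(h,h')$ decomposes the quantity in the lemma as $\mathrm{I}-\mathrm{II}-\mathrm{III}$.

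I would first dispose of $\mathrm{II}$ and $\mathrm{III}$, which turn out to be pure error terms. Reindexing $k=M+1+a$, the $k$-sum in $\mathrm{II}$ equals $\varphi_{h-h'}^{\,M+1}\overline{\varphi_{h'}}\sum_{a=0}^{N-1}\varphi_{h-h'}^{\,a}\overline{\varphi_{h'}}^{\,N-1-a}$, and the last sum is of the form estimated in Lemma \ref{charfunctionlemma}(i) with $x=(h-h')\alpha$, $y=-h'\alpha$, so that $x-y=h\alpha$ and the sum is $\ll\|dh\alpha\|^{-1}$. Since the prefactors have modulus at most $1$, $|c_h|\le|h|^{-1}$, $|c_{h'}|\le|h'|^{-1}$, and $|1-\varphi_{h'}|^{-1}\ll\|dh'\alpha\|^{-1}$ by Lemma \ref{charfunctionlemma}(i), one gets
\[ |\mathrm{II}|\ll\left(\sum_{0<|h|<H}\frac{1}{|h|\,\|dh\alpha\|}\right)^2, \]
which by \eqref{diophestimate} applied to $d\alpha$ is $O(\log^4 H)$ if $\gamma=1$ and $O(H^{2\gamma-2})$ if $\gamma>1$; the term $\mathrm{III}$ is handled identically, the relevant difference being $x-y=-h'\alpha$. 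In $\mathrm{I}$, the diagonal $h=h'$ contributes exactly $N\sum_{0<|h|<H}|c_h|^2\frac{1-|\varphi_h|^2}{|1-\varphi_h|^2}$, because $\varphi_0=1$ and $\sum_{k=M+1}^{M+N}1=N$; this is precisely the claimed main term. For the off-diagonal part of $\mathrm{I}$ I would use $|1-\varphi_h\overline{\varphi_{h'}}|\le|1-\varphi_h|+|1-\varphi_{h'}|$ together with $\left|\sum_{k=M+1}^{M+N}\varphi_{h-h'}^{\,k}\right|\le2|1-\varphi_{h-h'}|^{-1}\ll\|d(h-h')\alpha\|^{-1}$ (again by Lemma \ref{charfunctionlemma}(i)); after the symmetry $h\leftrightarrow h'$ this bounds the off-diagonal of $\mathrm{I}$ by a constant times
\[ \Sigma:=\sum_{\substack{0<|h|,|h'|<H\\ h\ne h'}}\frac{1}{|h|\,|h'|\,\|dh\alpha\|\,\|d(h-h')\alpha\|}. \]
Thus the whole problem reduces to the estimate $\Sigma=O(\log^4 H)$ if $\gamma=1$ and $\Sigma=O(H^{2\gamma-2})$ if $\gamma>1$.

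This last bound is the main obstacle. The difficulty is that $\|d(h-h')\alpha\|$ appears with no compensating factor $|h-h'|^{-1}$, so that the naive estimate $\|d(h-h')\alpha\|^{-1}\ll|h-h'|^{\gamma}$ is far too wasteful. Setting $g=h-h'$ and using the partial-fraction inequality $|h|^{-1}|h-g|^{-1}\le|g|^{-1}\bigl(|h|^{-1}+|h-g|^{-1}\bigr)$, the sum $\Sigma$ splits into $\bigl(\sum_g|g|^{-1}\|dg\alpha\|^{-1}\bigr)\bigl(\sum_h|h|^{-1}\|dh\alpha\|^{-1}\bigr)$, which is admissible by \eqref{diophestimate}, plus a shifted sum $\sum_g|g|^{-1}\|dg\alpha\|^{-1}\sum_h|h-g|^{-1}\|dh\alpha\|^{-1}$. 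The inner sum here would be estimated by a dyadic decomposition in $|h-g|$, using that a block of $L$ consecutive integers contains at most $O(\delta L+1)$ values of $h$ with $\|dh\alpha\|\le\delta$; it can be large only when $g$ lies close to a denominator of a convergent of $d\alpha$, and precisely for such $g$ the weight $|g|^{-1}\|dg\alpha\|^{-1}$ is correspondingly small, so the sum over $g$ still obeys the required bound. Making this trade-off quantitative — which requires a careful use of the continued fraction expansion of $d\alpha$, in the spirit of the proof of \eqref{diophestimate} in \cite{BB2} — completes the argument; the case $\gamma>1$ follows the same scheme with the polynomial analogues of the Diophantine estimates.
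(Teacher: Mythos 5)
Your setup is sound and is essentially the paper's: expanding the square, using independence of increments to evaluate $\E\,e^{2\pi i(hS_k-h'S_\ell)\alpha}$, summing the geometric series, extracting the main term from the diagonal $h=h'$ of $\mathrm{I}$ (your algebraic identity and the value $N\,|c_h|^2\frac{1-|\varphi_h|^2}{|1-\varphi_h|^2}$ are correct), and bounding the boundary terms $\mathrm{II}$, $\mathrm{III}$ by Lemma \ref{charfunctionlemma}(i) and two applications of \eqref{diophestimate}. The genuine gap is the last step, which is in fact the crux of the lemma: the bound $\Sigma\ll\log^4H$ (resp.\ $H^{2\gamma-2}$) is asserted but not proved. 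Your proposed route, the partial-fraction bound $\frac{1}{|h||h-g|}\le\frac{1}{|g|}\bigl(\frac{1}{|h|}+\frac{1}{|h-g|}\bigr)$, decouples the weights at the price of creating shifted Diophantine sums $\sum_h|h-g|^{-1}\|dh\alpha\|^{-1}$, which are not covered by \eqref{diophestimate} and are not uniformly $O(E_H)$: a single $h$ equal to a convergent denominator $q_m$ with $|h-g|=j$ small already contributes about $q_{m+1}/j$. The compensating claim that for such $g$ the weight $|g|^{-1}\|dg\alpha\|^{-1}$ is ``correspondingly small'' is unjustified and delicate: for $g=q_m-j$ one has $\|dg\alpha\|\approx\|dj\alpha\|$, so the weight is only about $(q_m\|dj\alpha\|)^{-1}$ and the product is borderline; controlling it over all $j$, all multiples of $q_m$ and all $m$ is genuine work that the sketch does not carry out.

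There is a short elementary way to close this, and it is how the paper finishes; it applies directly to your off-diagonal bound if you keep it in the two-term form $\frac{1}{|h||h'|}\bigl(\frac{1}{\|dh\alpha\|}+\frac{1}{\|dh'\alpha\|}\bigr)\frac{1}{\|d(h-h')\alpha\|}$ rather than symmetrizing away the distinction. Split according to whether $\|dh\alpha\|$ and $\|dh'\alpha\|$ are within a factor of $2$ of each other. If, say, $\|dh'\alpha\|\le\|dh\alpha\|/2$, then $\|d(h-h')\alpha\|\ge\|dh\alpha\|-\|dh'\alpha\|\ge\|dh\alpha\|/2$, so both terms are $\ll\frac{1}{|h|\,\|dh\alpha\|}\cdot\frac{1}{|h'|\,\|dh'\alpha\|}$ and the double sum factors into the product of two sums of type \eqref{diophestimate}; the case $\|dh\alpha\|\le\|dh'\alpha\|/2$ is symmetric. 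In the remaining comparable case both reciprocals are at most $2/\max\{\|dh\alpha\|,\|dh'\alpha\|\}$; assuming $|h'|\le|h|$ by symmetry you have $|h|\ge|h-h'|/2$, hence the term is $\ll\frac{1}{|h'|\,\|dh'\alpha\|}\cdot\frac{1}{|i|\,\|di\alpha\|}$ with $i=h-h'$, and summing over $h'$ and $i$ independently again gives $O(E_H)$. No continued-fraction analysis and no shifted sums are needed; with this replacement of your final paragraph the proof is complete and coincides with the paper's.
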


\begin{proof} Let $d=\mathrm{gcd} (\mathrm{supp}\, X_1 )$. Replacing $X_1, X_2, \dots$ by $X_1/d, X_2/d, \dots$ and $\alpha$ by $d \alpha$, we may assume that $d=1$. Let $V$ denote the left hand side of the claim. Expanding the square we get
\begin{equation}\label{expandsquare}
V= \sum_{0<|j|,|h|<H} c_j \overline{c_h} \sum_{k, \ell =M+1}^{M+N} \E e(jS_k \alpha -h S_{\ell}\alpha ).
\end{equation}
Here
\begin{equation}\label{jskhsl}
\E e(jS_k \alpha -h S_{\ell}\alpha ) = \left\{ \begin{array}{ll} \varphi (2 \pi (j-h)\alpha )^k \varphi (-2 \pi h \alpha)^{\ell -k} & \mathrm{if} \,\,\, k<\ell , \\ \varphi (2 \pi (j-h) \alpha)^k & \mathrm{if} \,\,\, k=\ell , \\ \varphi (2 \pi (j-h)\alpha )^{\ell} \varphi (2 \pi j \alpha)^{k - \ell} & \mathrm{if} \,\,\, k> \ell . \end{array} \right.
\end{equation}
First, we estimate the off-diagonal terms $j \neq h$ in \eqref{expandsquare}. Let $a=\varphi (-2 \pi h \alpha)$, $b=\varphi (2 \pi j \alpha)$ and $c=\varphi (2 \pi (j-h) \alpha)$. Summing over $M+1 \le k,\ell \le M+N$ we obtain
\[ \begin{split} \sum_{k,\ell =M+1}^{M+N} \mathbb{E}e(j S_k \alpha -h S_{\ell} \alpha ) &= \\ c^{M+1} \Bigg( &\frac{a}{1-a} \cdot \frac{1-c^N}{1-c} - \frac{a}{1-a} \sum_{k=0}^{N-1}c^k a^{N-k-1} \\ &+\frac{1-c^N}{1-c} +\frac{b}{1-b} \cdot \frac{1-c^N}{1-c} - \frac{b}{1-b} \sum_{k=0}^{N-1}c^k b^{N-k-1} \Bigg) . \end{split} \]
Lemma \ref{charfunctionlemma} (i) shows that $\left|\sum_{k=0}^{N-1}c^k a^{N-k-1} \right| \ll 1/\| j \alpha \|$, $\left|\sum_{k=0}^{N-1}c^k b^{N-k-1} \right| \ll 1/\| h \alpha \|$, $1/|1-a| \ll 1/\| h \alpha \|$, $1/|1-b| \ll 1/\| j \alpha \|$ and $1/|1-c| \ll 1/\| (j-h) \alpha \|$. The contribution of all off-diagonal terms in $V$ is thus
\[ \ll \sum_{\substack{0<|j|,|h|<H \\ j \neq h}} \frac{1}{|j| \cdot |h|} \left( \frac{1}{\| j \alpha \| \cdot \| h \alpha \|} + \frac{1}{\| j \alpha \| \cdot \| (j-h) \alpha \|} + \frac{1}{\| h \alpha \| \cdot \| (j-h) \alpha \|} \right) . \]
Let $E_H=\log^4 H$ if $\gamma =1$, and $E_H=H^{2 \gamma -2}$ if $\gamma >1$ denote the error term in the claim. The classical estimate \eqref{diophestimate} shows that here the contribution of $1/(|j| \cdot \| j \alpha \| \cdot |h| \cdot \| h \alpha \|)$ is indeed $O(E_H)$. Note that if $\| j \alpha \| \le \| h \alpha \|/2$, then $\| (j-h) \alpha \| \ge \| h \alpha \|/2$, hence the contribution of all such terms is also $O(E_H)$. A similar claim holds if $\| h \alpha \| \le \| j \alpha \|/2$. Thus it is enough to consider the terms for which $\| j \alpha \|$ and $\| h \alpha \|$ are equal up to a factor of $2$. In particular, we need to estimate
\begin{equation}\label{maxjh}
\sum_{\substack{0<|j|,|h|<H \\ j \neq h}} \frac{1}{|j| \cdot |h|} \cdot \frac{1}{\max \{ \| j \alpha \|, \| h \alpha \| \} \cdot \| (j-h) \alpha \|} .
\end{equation}
This is symmetric in $j,h$, thus it is enough to consider the terms for which, say, $|j| \le |h|$. For any such term $|h| \ge |j-h|/2$. Hence summing over $j$ and $i=j-h \neq 0$ instead of $j$ and $h$, we get from \eqref{diophestimate} that \eqref{maxjh} is
\[ \ll \sum_{0<|j|<H} \sum_{0<|i|<2H-1} \frac{1}{|j| \cdot \| j \alpha \|} \cdot \frac{1}{|i| \cdot \| i \alpha \|} \ll E_H. \]
The total contribution of all off-diagonal terms $j \neq h$ in \eqref{expandsquare} is thus $O(E_H)$.

Next, we estimate the diagonal terms $j=h$ in \eqref{expandsquare}. Using \eqref{jskhsl} and the fact that $\varphi (-2 \pi h \alpha )=\overline{\varphi (2 \pi h \alpha)}$, after some simplification we get
\[ \sum_{k,\ell =M+1}^{M+N} \E e(hS_k \alpha - hS_{\ell} \alpha ) = N \frac{1-|\varphi (2 \pi h \alpha )|^2}{|1-\varphi (2 \pi h \alpha )|^2} + O\left( \frac{1}{|1-\varphi (2 \pi h \alpha )|^2} \right) . \]
Finally, applying Lemma \ref{charfunctionlemma} (i) and the classical estimate \eqref{diophestimate} we get that the total contribution of this error term in \eqref{expandsquare} also satisfies
\[ \sum_{0<|h|<H} |c_h|^2 \frac{1}{|1-\varphi (2 \pi h \alpha )|^2} \ll \sum_{0<|h|<H} \frac{1}{h^2 \| h \alpha \|^2} \ll E_H . \]
\end{proof}

\begin{lem}\label{varianceprop}
Assume that $\E |X_1|< \infty$ and $\E X_1 \neq 0$, and let $U$ be uniformly distributed on $\mathbb{R}/\mathbb{Z}$, independent of $X_1, X_2, \dots$. Further, let $\alpha$ be irrational such that $\inf_{h \in \mathbb{Z}\backslash \{ 0 \}} |h|^{\gamma} \| h \alpha \| >0$ with some constant $1 \le \gamma <2$.
\begin{enumerate}
\item[(i)] For any $f \in \mathcal{F}$ the infinite series in \eqref{C} is convergent, and $C(\alpha, f) \ge 0$ with equality if and only if $\bar{f}=0$ a.e.
\item[(ii)] For any $f \in \mathcal{F}$ and any integer $N \ge 1$,
\[ \begin{split} \E \left( \sum_{k=1}^N f(U+S_k \alpha) -E(f)N \right)^2 = &C(\alpha ,f) N \\ &+ \left\{ \begin{array}{ll} O \left( V(f)^2 \log N \right) & \mathrm{if} \,\,\, \gamma =1, \\ O \left( V(f)^2 N^{2-2/\gamma} \right) & \mathrm{if} \,\,\, 1<\gamma <2 \end{array} \right. \end{split} \]
with implied constants depending only on the distribution of $X_1$, $\alpha$ and $\gamma$.
\end{enumerate}
\end{lem}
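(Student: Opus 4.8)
The plan is to pass to Fourier series and exploit the fact that averaging over the uniform variable $U$ annihilates all distinct‑frequency contributions, which reduces both the variance and the constant $C(\alpha,f)$ to single‑frequency exponential sums of the type analysed as the diagonal terms in the proof of Lemma~\ref{expsummoment}. As there, we may assume $d=\mathrm{gcd}(\mathrm{supp}\,X_1)=1$ (replace $X_j$ by $X_j/d$ and $\alpha$ by $d\alpha$; this preserves the Diophantine hypothesis and leaves $\sum_k e(hS_k\alpha)$ and $\varphi(2\pi h\alpha)$ unchanged), so that $|\varphi(2\pi h\alpha)|<1$ for every $h\neq 0$ (as $h\alpha$ is irrational) and $|1-\varphi(2\pi h\alpha)|\gg\|h\alpha\|$ by Lemma~\ref{charfunctionlemma}(i). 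Writing $\bar f=f-E(f)$ with Fourier series $\sum_{h\neq 0}\hat f(h)e(hx)$, we have $\sum_{k=1}^N f(U+S_k\alpha)-E(f)N=\sum_{k=1}^N\bar f(U+S_k\alpha)$, a centered random variable. Expanding each $\bar f(U+S_k\alpha)$ in its Fourier series in $U$ and applying Parseval in $U$ followed by Tonelli, all cross‑frequency terms disappear and
\[ \E\Big(\sum_{k=1}^N\bar f(U+S_k\alpha)\Big)^2=\sum_{h\neq 0}|\hat f(h)|^2\,\E\Big|\sum_{k=1}^N e(hS_k\alpha)\Big|^2,\qquad \E\big(\bar f(U)\bar f(U+S_m\alpha)\big)=\sum_{h\neq 0}|\hat f(h)|^2\,\overline{\varphi(2\pi h\alpha)}^{\,m}. \]

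A direct evaluation of the geometric sums in $\E|\sum_{k=1}^N e(hS_k\alpha)|^2=\sum_{k,\ell=1}^N\E\,e(h(S_k-S_\ell)\alpha)$ gives $\E|\sum_{k=1}^N e(hS_k\alpha)|^2=N A_h+\varepsilon_h(N)$, where $A_h:=(1-|\varphi(2\pi h\alpha)|^2)/|1-\varphi(2\pi h\alpha)|^2>0$ and $\varepsilon_h(N)=2\,\mathrm{Re}\,\big(\varphi(2\pi h\alpha)^{N+1}-\varphi(2\pi h\alpha)\big)/\big(1-\varphi(2\pi h\alpha)\big)^{2}$; since $|\varphi^{N+1}-\varphi|=|1-\varphi|\,|1+\varphi+\cdots+\varphi^{N-1}|\leq\min\{2,\,N|1-\varphi|\}$ and $|1-\varphi(2\pi h\alpha)|\gg\|h\alpha\|$, this yields the twin bound $|\varepsilon_h(N)|\ll\min\{N/\|h\alpha\|,\ \|h\alpha\|^{-2}\}$. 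For part (i), summing the geometric series $\sum_{m\geq 1}\overline{\varphi(2\pi h\alpha)}^{\,m}$ termwise and passing to the limit under $\sum_{h\neq 0}$ by dominated convergence — legitimate because $|\hat f(h)|^2/|1-\varphi(2\pi h\alpha)|\ll V(f)^2/(h^2\|h\alpha\|)$ and $\sum_{h\geq 1}(h^2\|h\alpha\|)^{-1}<\infty$ for $1\leq\gamma<2$ by Abel summation and \eqref{diophestimate} — shows that the series in \eqref{C} converges and that $C(\alpha,f)=\sum_{h\neq 0}|\hat f(h)|^2 A_h$. Since $A_h>0$, $|\hat f(h)|\ll V(f)/|h|$ and $A_h\ll\|h\alpha\|^{-1}$, this is a convergent nonnegative series that vanishes exactly when $\hat f(h)=0$ for all $h\neq 0$, i.e.\ when $\bar f=0$ a.e.

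For part (ii), subtracting gives $\E\big(\sum_{k=1}^N f(U+S_k\alpha)-E(f)N\big)^2-C(\alpha,f)N=\sum_{h\neq 0}|\hat f(h)|^2\varepsilon_h(N)$, so it remains to bound $\sum_{h\geq 1}h^{-2}\min\{N/\|h\alpha\|,\ \|h\alpha\|^{-2}\}$. The main obstacle is that one cannot do this by estimating $NA_h$ and $\varepsilon_h(N)$ separately and summing over all $h$: for $\gamma=1$ both $\sum_{h\neq 0}|\hat f(h)|^2 NA_h$ and $\sum_{h\neq 0}|\hat f(h)|^2\|h\alpha\|^{-2}$ diverge (partial sums of $\sum_{h\leq H}(h\|h\alpha\|)^{-2}$ grow like $\log H$), the divergences cancelling only inside $\varepsilon_h(N)$. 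The resolution is to retain $\varepsilon_h(N)$ whole, use that it is genuinely as small as $N/\|h\alpha\|$ precisely where $\|h\alpha\|$ is small, and combine this with the sparsity estimate
\[ \sum_{\substack{h\geq 1\\ \|h\alpha\|\leq\delta}}\frac{1}{h^2}\ \ll\ \delta^{2/\gamma}\qquad(0<\delta<1/2), \]
which follows from the Diophantine hypothesis (any such $h$ satisfies $h\gg\delta^{-1/\gamma}$, and on a dyadic block $[T,2T)$ two such $h$ differ by $\gg\delta^{-1/\gamma}$, so the block contains $\ll T\delta^{1/\gamma}+1$ of them; summing the resulting $\ll\delta^{1/\gamma}/T+1/T^2$ over dyadic $T\gg\delta^{-1/\gamma}$). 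Splitting the $h$‑sum according to $\|h\alpha\|\in(2^{-j-1},2^{-j}]$: when $2^{-j}>1/N$ use $|\varepsilon_h(N)|\ll\|h\alpha\|^{-2}\ll 2^{2j}$ and the sparsity estimate (with $\delta=2^{-j}$) to bound the block by $\ll 2^{2j(1-1/\gamma)}$; when $2^{-j}\leq 1/N$ use $|\varepsilon_h(N)|\ll N/\|h\alpha\|\ll N2^{j}$ to bound it by $\ll N\,2^{j(1-2/\gamma)}$. Summing over $j$, the first family contributes $\ll\log N$ if $\gamma=1$ and $\ll N^{2-2/\gamma}$ if $\gamma>1$ (a geometric sum dominated by $j\approx\log_2 N$), while the second is a convergent geometric sum — here $\gamma<2$ is essential — of total size $\ll N\cdot N^{1-2/\gamma}=N^{2-2/\gamma}$ (and $\ll 1$ if $\gamma=1$). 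This gives the stated error term and completes (ii).
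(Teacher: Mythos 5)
Your argument is correct, and its skeleton is the one the paper uses: reduce to $d=1$, expand in Fourier series and use independence of $U$ (you via Parseval in $U$, the paper via the autocorrelation $g(x)=\int_0^1 f(u)f(u+x)\,\mathrm{d}u$ with $\hat g(h)=|\hat f(h)|^2$ — the same computation), arrive at the exact identity with main term $N\sum_{h\neq 0}|\hat f(h)|^2(1-|\varphi(2\pi h\alpha)|^2)/|1-\varphi(2\pi h\alpha)|^2$ and an error $\varepsilon_h(N)$ obeying the twin bound $\ll\min\{\|h\alpha\|^{-2},\,N/\|h\alpha\|\}$ through Lemma~\ref{charfunctionlemma}(i), and handle part (i) by dominated convergence using $\sum_{h\ge1}(h^2\|h\alpha\|)^{-1}<\infty$. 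The only genuine divergence from the paper is the final Diophantine summation: the paper splits at $|h|\approx N^{1/\gamma}$ and quotes $\sum_{h\le H}(h\|h\alpha\|)^{-2}\ll H^{2\gamma-2}$ (resp.\ $\log H$) together with \eqref{diophestimate} on dyadic blocks in $h$, whereas you dyadically decompose in the value of $\|h\alpha\|$ and prove from scratch the sparsity estimate $\sum_{\|h\alpha\|\le\delta}h^{-2}\ll\delta^{2/\gamma}$ via the gap argument $h_2-h_1\gg\delta^{-1/\gamma}$; your route is self-contained (it does not need the $(h\|h\alpha\|)^{-2}$ estimate at all) and yields the same $\log N$ and $N^{2-2/\gamma}$ bounds. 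One small misstatement in your motivational aside: $\sum_{h\neq 0}|\hat f(h)|^2 NA_h$ does \emph{not} diverge for $\gamma=1$ — it equals $NC(\alpha,f)<\infty$, since $A_h\ll\|h\alpha\|^{-1}$ and $\sum_h(h^2\|h\alpha\|)^{-1}<\infty$; what is true (and what your proof actually uses) is only that the crude bound $|\varepsilon_h(N)|\ll\|h\alpha\|^{-2}$ alone is not summable against $|\hat f(h)|^2$ when $\gamma=1$, so the $\min$ with $N/\|h\alpha\|$ must be retained. This slip plays no role in the proof.
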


\begin{proof} Let $d=\mathrm{gcd} (\mathrm{supp} \, X_1)$ and $D=\mathrm{gcd} (\mathrm{supp} \, X_1 - \mathrm{supp} \, X_1)$. Replacing $X_1, X_2, \dots$ by $X_1/d, X_2/d, \dots$ and $\alpha$ by $d \alpha$, we may assume that $d=1$. We may also assume that $E(f)=0$. Since the variable $U$ is independent of $X_1, X_2, \dots$, we have
\[ \E f(U) f(U+S_k \alpha ) = \E \int_0^1 f(u) f(u+S_k \alpha ) \, \mathrm{d}u = \E g(S_k \alpha ), \]
where $g(x)=\int_0^1 f(u)f(u+x) \, \mathrm{d}u$. Note that $\hat{g}(h)=|\hat{f}(h)|^2$ for any integer $h$, and that $\hat{f}(0)=0$. Further, integration by parts shows that $|\hat{f}(h)|\le V(f)/(2 \pi |h|)$ for any integer $h \neq 0$. In particular, the Fourier series of $g$ is absolutely convergent. Clearly $g$ is continuous, hence the Fourier series of $g$ converges uniformly to $g$. Therefore we can write
\begin{equation}\label{EGskalpha}
\E g(S_k \alpha) = \E \sum_{h \neq 0} \hat{g}(h) e(hS_k \alpha) = \sum_{h \neq 0} |\hat{f}(h)|^2 \varphi (2 \pi h \alpha )^k.
\end{equation}

First, we prove (i). Recall from the proof of Lemma \ref{charfunctionlemma} that $\varphi (2 \pi x)=1$ if and only if $x \in \mathbb{Z}$; also, $|\varphi (2 \pi x)|=1$ if and only if $Dx \in \mathbb{Z}$. In particular, $|\varphi (2 \pi h \alpha )|<1$ for any $h \neq 0$. For any positive integer $K$ we thus have
\begin{equation}\label{sumK}
\sum_{k=1}^K \E f(U)f(U+S_k \alpha ) = \sum_{h \neq 0} |\hat{f}(h)|^2 \frac{\varphi (2 \pi h \alpha )-\varphi (2 \pi h \alpha )^{K+1}}{1-\varphi (2 \pi h \alpha )} .
\end{equation}
Using Lemma \ref{charfunctionlemma} (i), the classical estimate \eqref{diophestimate} and a dyadic decomposition, in the case $1<\gamma <2$ we get
\[ \begin{split} \sum_{h \neq 0} \frac{|\hat{f}(h)|^2}{|1- \varphi (2 \pi h \alpha )|} \ll \sum_{h \neq 0} \frac{V(f)^2}{h^2 \| h \alpha \|} &= \sum_{m=0}^{\infty} \sum_{2^m \le |h| < 2^{m+1}} \frac{V(f)^2}{h^2 \| h \alpha \|} \\ & \ll \sum_{m=0}^{\infty} \frac{V(f)^2}{2^m} 2^{(\gamma -1)m} < \infty . \end{split} \]
A similar estimate holds if $\gamma =1$. We can thus take the limit as $K \to \infty$ in \eqref{sumK} to obtain
\begin{equation}\label{fuguskalpha}
\sum_{k=1}^{\infty} \E f(U) f(U+S_k \alpha ) = \sum_{h \neq 0} |\hat{f}(h)|^2 \frac{\varphi (2 \pi h \alpha )}{1-\varphi (2 \pi h \alpha )} ,
\end{equation}
and the convergence of the series in \eqref{C} follows. Combining the $h$ and $-h$ terms in \eqref{fuguskalpha} and using $\E f(U)^2=\sum_{h \neq 0} |\hat{f}(h)|^2$, we get
\begin{equation}\label{Fouriervariance}
C(\alpha, f)= \E \bar{f}(U)^2 + 2 \sum_{k=1}^{\infty} \E \bar{f}(U)\bar{f}(U+S_k \alpha ) = \sum_{h \neq 0} |\hat{f}(h)|^2 \frac{1-|\varphi (2 \pi h \alpha)|^2}{|1-\varphi (2 \pi h \alpha )|^2} .
\end{equation}
Here $0<(1-|\varphi (2 \pi h \alpha )|^2)/|1-\varphi (2 \pi h \alpha)|^2 \ll 1/\| h \alpha \|$, and thus $C(\alpha ,f) \ge 0$. Further, $C(\alpha ,f)=0$ if and only if $\hat{f}(h)=0$ for all integers $h \neq 0$; the latter condition is equivalent to $\bar{f}=0$ a.e.

Next, we prove (ii). We may assume $V(f)=1$. Let $V=\E \left( \sum_{k=1}^N f(U+S_k \alpha) \right)^2$. Expanding the square we get
\begin{equation}\label{fusksecondmoment}
V = \sum_{k=1}^N \E f(U+S_k \alpha )^2 +2 \sum_{1 \le k<\ell \le N} \E f(U+S_k \alpha ) f(U+S_{\ell} \alpha ) .
\end{equation}
Let us write $U+S_{\ell} \alpha =U+S_k \alpha + (S_{\ell}-S_k) \alpha$. Since $U+S_k \alpha \pmod{\mathbb{Z}}$ is uniformly distributed on $\mathbb{R}/\mathbb{Z}$ and independent of $(S_{\ell}-S_k) \alpha$, we have $\E f(U+S_k \alpha)^2 = \E f(U)^2 = \| f \|_2^2$ and $\E f(U+S_k \alpha ) f(U+S_{\ell} \alpha ) = \E f(U)f(U+S_{\ell-k} \alpha)$. Hence \eqref{fusksecondmoment} simplifies as $V=N \| f \|_2^2 + 2 \sum_{k=1}^{N-1} (N-k) \E f(U) f(U+S_k \alpha )$. Using \eqref{EGskalpha} we thus have
\[ V=N \| f \|_2^2 + 2 \sum_{h \neq 0} |\hat{f}(h)|^2 \sum_{k=1}^{N-1} (N-k) \varphi (2 \pi h \alpha )^k. \]
Here the inner sum is
\[ \sum_{k=1}^{N-1} (N-k)\varphi (2 \pi h \alpha )^k = N \frac{\varphi (2 \pi h \alpha )}{1-\varphi (2 \pi h \alpha )} - \frac{\varphi (2 \pi h \alpha )-\varphi (2 \pi h \alpha )^{N+1}}{(1-\varphi (2 \pi h \alpha ))^2} . \]
By Lemma \ref{charfunctionlemma} (i) the second term on the right hand side of the previous line has absolute value at most
\[ \min \left\{ \frac{2}{\left| 1-\varphi (2 \pi h \alpha ) \right|^2}, \frac{N}{|1-\varphi (2 \pi h \alpha )|} \right\} \ll \min \left\{ \frac{1}{\| h \alpha \|^2} , \frac{N}{\| h \alpha \|} \right\} . \]
From $|\hat{f}(h)| \le 1/|h|$, the classical estimate \eqref{diophestimate} and a dyadic decomposition, in the case $1<\gamma <2$ we get
\[ \begin{split} \sum_{h \neq 0} |\hat{f}(h)|^2 \min \left\{ \frac{1}{\| h \alpha \|^2} , \frac{N}{\| h \alpha \|} \right\} &\ll \sum_{1 \le h \le 2 N^{1/\gamma}} \frac{1}{h^2 \| h \alpha \|^2} + \sum_{m \ge \log_2 N^{1/\gamma}} \sum_{2^m \le h < 2^{m+1}} \frac{N}{h^2 \| h \alpha \|} \\ &\ll N^{2-2/\gamma} + \sum_{m \ge \log_2 N^{1/\gamma}} N 2^{(\gamma -2)m} \\ &\ll N^{2-2/\gamma} , \end{split} \]
therefore $V=N \| f \|_2^2 + 2 N \sum_{h \neq 0} |\hat{f}(h)|^2 \varphi (2 \pi h \alpha )/(1-\varphi (2 \pi h \alpha )) + O(N^{2-2/\gamma})$. In the case $\gamma =1$ the same holds with error term $O(\log N)$. Combining the $h$ and $-h$ terms, and using $\| f \|_2^2 = \sum_{h \neq 0} |\hat{f}(h)|^2$, we finally obtain
\[ V=N \sum_{h \neq 0} |\hat{f}(h)|^2 \frac{1-|\varphi (2 \pi h \alpha )|^2}{|1-\varphi (2 \pi h \alpha )|^2}+ \left\{ \begin{array}{ll} O \left( V(f)^2 \log N \right) & \mathrm{if} \,\,\, \gamma =1, \\ O \left( V(f)^2 N^{2-2/\gamma} \right) & \mathrm{if} \,\,\, 1<\gamma <2, \end{array} \right. \]
as claimed.
\end{proof}

\begin{remark} If $\E X_1^2 < \infty$, then $\lim_{x \to 0} (1-|\varphi (2 \pi x)|^2)/|1-\varphi (2 \pi x)|^2 = \mathrm{Var} X_1 / (\E X_1)^2$ is finite; in particular, $(1-|\varphi (2 \pi h \alpha)|^2)/|1-\varphi (2 \pi h \alpha)|^2 \ll 1$, and hence $C(\alpha ,f) \ll \| \bar{f} \|_2^2$. Also, the zeroes of the function above are at points $x$ such that $Dx \in \mathbb{Z}$ but $x \not\in \mathbb{Z}$. In the special case $D=1$ we thus also have $(1-|\varphi (2 \pi h \alpha)|^2)/|1-\varphi (2 \pi h \alpha)|^2 \gg 1$, and so $C(\alpha ,f) \gg \| \bar{f} \|_2^2$.
\end{remark}

We now prove the remarks made in the Introduction about the rational case. Let $p/q$ be a reduced fraction, and assume that $D= \mathrm{gcd} (\mathrm{supp}\, X_1 - \mathrm{supp}\, X_1)$ is relatively prime to $q$. Let $\hat{f_q}(h)=q^{-1}\sum_{a=0}^{q-1} f(a/q)e(-ha/q)$, $h=0,1, \dots, q-1$ be the Fourier coefficients of $f$ on $\mathbb{Z}_q=\{ 0,1/q, \dots, (q-1)/q \}$, and let $E(f)=\hat{f_q}(0)$ and $\bar{f}=f-E(f)$. Finally, let $U$ be a random variable uniformly distributed on $\mathbb{Z}_q$, independent of $X_1, X_2, \dots$, and define
\[ C(p/q, f) = \E \bar{f}(U)^2 + 2 \sum_{k=1}^{\infty} \E \bar{f}(U) \bar{f}(U+S_kp/q). \]
Following the proof of Lemma \ref{varianceprop} (i) using Fourier analysis on $\mathbb{Z}_q$ instead of $\mathbb{R}/\mathbb{Z}$, we get
\begin{equation}\label{Fouriervariancerational}
C(p/q, f) = \sum_{h=1}^{q-1} |\hat{f_q}(h)|^2 \frac{1-|\varphi (2 \pi h p/q)|^2}{|1-\varphi (2 \pi h p/q)|^2} .
\end{equation}
In particular, $C(p/q, f) \ge 0$ with equality if and only if $\bar{f}=0$ on $\mathbb{Z}_q$. For a far reaching generalization of these Fourier analytic expressions for $C(\alpha, f)$ and $C(p/q, f)$ to compact groups we refer to \cite{B1}. We also note that the condition $\mathrm{gcd} (D,q)=1$ is equivalent to the distribution of $X_1 p/q \pmod{\mathbb{Z}}$ not being supported on a translate of a proper subgroup of $\mathbb{Z}_q$; consequently, the CLT \eqref{rationalCLT} and the LIL \eqref{rationalLIL} are special cases of the quantitative ergodic theorems for random walks on compact groups in the same paper.

\begin{prop}\label{Cconvergence} Assume that $\E |X_1| < \infty$ and $\E X_1 \neq 0$, and let $\alpha$ be irrational such that $\inf_{h \in \mathbb{Z}\backslash \{ 0 \}} |h|^{\gamma} \| h \alpha \| >0$ with some constant $1 \le \gamma <2$. For any $f \in \mathcal{F}$,
\[ C(p/q, f) \to C(\alpha, f) \]
as $p/q \to \alpha$ along the sequence of best rational approximations, provided that $D= \mathrm{gcd} (\mathrm{supp}\, X_1 - \mathrm{supp}\, X_1)$ is relatively prime to all but finitely many $q$ in this sequence.
\end{prop}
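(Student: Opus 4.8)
The plan is to work directly with the Fourier-analytic formulas \eqref{Fouriervariance} and \eqref{Fouriervariancerational} and to show that the rational expression converges to the irrational one term by term, with a uniform tail bound that justifies interchanging limit and summation. We may assume $E(f)=0$. Write $d=\mathrm{gcd}(\mathrm{supp}\,X_1)$; since $d\mid D$, after removing the finitely many $q$ with $\mathrm{gcd}(D,q)>1$ (which does not affect the limit) we may assume $\mathrm{gcd}(d,q_n)=1$ for every convergent $p_n/q_n$ in the sequence. The summand in \eqref{Fouriervariancerational} is periodic in $h$ with period $q_n$ (both $\hat{f_{q_n}}(h)$ and $\varphi(2\pi h p_n/q_n)$ are), so we may rewrite $C(p_n/q_n,f)=\sum_{0<|h|\le q_n/2}|\hat{f_{q_n}}(h)|^2 g_n(h)$ summing over a complete set of nonzero residues mod $q_n$, where $g_n(h)=(1-|\varphi(2\pi h p_n/q_n)|^2)/|1-\varphi(2\pi h p_n/q_n)|^2$; likewise set $g_\alpha(h)=(1-|\varphi(2\pi h\alpha)|^2)/|1-\varphi(2\pi h\alpha)|^2$, so $C(\alpha,f)=\sum_{h\neq 0}|\hat f(h)|^2 g_\alpha(h)$. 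The basic inputs I would record first are: (a) by Lemma \ref{charfunctionlemma}(i) and $1-|z|^2\le 2|1-z|$, $g_n(h)\ll 1/\|d h p_n/q_n\|$ and $g_\alpha(h)\ll 1/\|d h\alpha\|$; (b) by Abel summation together with the elementary bound $\big|\sum_{a=0}^{m}e(-ha/q)\big|\ll q/|h|$ for $0<|h|\le q/2$, one has $|\hat{f_q}(h)|\ll V(f)/|h|$ in that range; and (c) for fixed $h\neq 0$, $\hat{f_{q_n}}(h)\to\hat f(h)$ (the defining sum is a Riemann sum of the bounded-variation $1$-periodic function $x\mapsto f(x)e(-hx)$, with error $O_h(1/q_n)$) and $g_n(h)\to g_\alpha(h)$ (continuity of $\varphi$, using $\varphi(2\pi h\alpha)\neq 1$ and $|\varphi(2\pi h\alpha)|<1$, which hold since $d h\alpha,\,D h\alpha\notin\mathbb{Z}$).

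The arithmetic heart of the argument is a comparison of $\|d h p_n/q_n\|$ with $\|d h\alpha\|$ on the range $0<d|h|\le q_n/4$. Using the classical continued fraction estimates $\|m\alpha\|\ge\|q_{n-1}\alpha\|>1/(2q_n)$ for $0<m<q_n$ and $|\alpha-p_n/q_n|<1/(q_n q_{n+1})$, one gets $|d h p_n/q_n-d h\alpha|<d|h|/(q_n q_{n+1})\le 1/(4q_{n+1})\le\tfrac12\|d h\alpha\|$, hence $\|d h p_n/q_n\|\ge\tfrac12\|d h\alpha\|$ whenever $0<|h|\le q_n/(4d)$. On the complementary range $q_n/(4d)<|h|\le q_n/2$ no such comparison is available, and there I would only use the trivial $\|d h p_n/q_n\|\ge 1/q_n$ (valid since $\mathrm{gcd}(d p_n,q_n)=1$ and $0<|h|<q_n$), together with $|h|\asymp q_n$ and $\sum_{0<|h|\le q_n/2}1/\|d h p_n/q_n\|\ll q_n\log q_n$ (the map $h\mapsto d h p_n\bmod q_n$ being a bijection of the nonzero residues mod $q_n$).

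Combining these, fix $\varepsilon>0$. By (b) and (a),
\[ \sum_{H<|h|\le q_n/2}|\hat{f_{q_n}}(h)|^2 g_n(h)\ll V(f)^2\sum_{H<|h|\le q_n/2}\frac{1}{h^2\|d h p_n/q_n\|}, \]
and I would split the right side at $|h|=q_n/(4d)$. For $H<|h|\le q_n/(4d)$ the previous paragraph gives a bound $\ll V(f)^2\sum_{|h|>H}1/(h^2\|d h\alpha\|)$; since $d\alpha$ is again $\gamma$-Diophantine with $1\le\gamma<2$, the series $\sum_h 1/(h^2\|d h\alpha\|)$ converges (exactly as in the proof of Lemma \ref{varianceprop}(i), applying \eqref{diophestimate} to $d\alpha$), so one may choose $H=H(\varepsilon)$ making this $<\varepsilon V(f)^2$. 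For $q_n/(4d)<|h|\le q_n/2$, using $|h|\asymp q_n$ and the total-mass bound of the previous paragraph, the partial sum is $\ll V(f)^2 q_n^{-2}\cdot q_n\log q_n=V(f)^2(\log q_n)/q_n\to 0$ as $n\to\infty$. Meanwhile the head $\sum_{0<|h|\le H}|\hat{f_{q_n}}(h)|^2 g_n(h)$ is a fixed finite sum which, by (c), converges to $\sum_{0<|h|\le H}|\hat f(h)|^2 g_\alpha(h)$. Hence $\limsup_{n\to\infty}|C(p_n/q_n,f)-C(\alpha,f)|\le O(\varepsilon V(f)^2)+\sum_{|h|>H}|\hat f(h)|^2 g_\alpha(h)$, and letting $\varepsilon\to 0$ (so $H\to\infty$) the right side tends to $0$ since the series in \eqref{Fouriervariance} converges. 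This yields $C(p_n/q_n,f)\to C(\alpha,f)$.

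I expect the resonant range $q_n/(4d)<|h|\le q_n/2$ to be the only real obstacle: there the frequencies $h p_n/q_n$ carry no information about the Diophantine nature of $\alpha$, and the estimate must come entirely from the smallness $|\hat{f_{q_n}}(h)|\ll V(f)/q_n$ of the discrete Fourier coefficients in that range combined with the crude spacing bound $\|d h p_n/q_n\|\ge 1/q_n$; everything else is either a standard continued fraction estimate or a routine Fourier/Abel-summation computation.
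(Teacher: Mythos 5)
Your argument is correct, and it follows the same basic strategy as the paper: both proofs work from the Fourier expressions \eqref{Fouriervariance} and \eqref{Fouriervariancerational}, use termwise convergence of $\hat{f_q}(h)\to\hat f(h)$ and $\varphi(2\pi hp/q)\to\varphi(2\pi h\alpha)$, the bound $|\hat{f_q}(h)|\ll V(f)/|h|$, the estimate from Lemma \ref{charfunctionlemma}~(i), and a comparison of $\|hp/q\|$ with $\|h\alpha\|$ coming from the best-approximation property, finishing by a dominated-convergence-type argument. The difference is in how the frequency range is handled. The paper uses the single inequality $\|hp/q\|\ge\|h\alpha\|-|h|\,|\alpha-p/q|\ge\tfrac12\|h\alpha\|$ for all $0<|h|\le q/2$, which gives the uniform dominating function $V(f)^2/(h^2\|h\alpha\|)$ at once; note, however, that the paper's bound $(1-|\varphi(2\pi hp/q)|^2)/|1-\varphi(2\pi hp/q)|^2\ll 1/\|hp/q\|$ is exactly Lemma \ref{charfunctionlemma}~(i) only when $d=\mathrm{gcd}(\mathrm{supp}\,X_1)=1$; for general $d$ the lemma yields $1/\|dhp/q\|$, and this is precisely the point where your split at $|h|=q_n/(4d)$ becomes necessary, since the comparison with $\|dh\alpha\|$ is no longer available when $d|h|$ can exceed $q_{n+1}$. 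Your resonant-range estimate (bijectivity of $h\mapsto dhp_n\bmod q_n$, the spacing bound $\|dhp_n/q_n\|\ge 1/q_n$, and $|\hat{f_{q_n}}(h)|\ll V(f)/q_n$ there, giving a contribution $\ll V(f)^2(\log q_n)/q_n$) is therefore a genuine extra ingredient not present in the paper, and it makes the argument work for $d>1$ without the usual normalization, at the cost of an $\varepsilon$--$H$ three-range decomposition instead of the paper's one-line dominated convergence; the convergence of $\sum_h 1/(h^2\|dh\alpha\|)$ that you invoke via \eqref{diophestimate} applied to $d\alpha$ is justified exactly as in Lemma \ref{varianceprop}~(i). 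The only cosmetic slip is that the summation range in \eqref{Fouriervariancerational} should be a complete set of residues such as $-q/2<h\le q/2$ rather than $0<|h|\le q/2$ (which double-counts $h=\pm q/2$ for even $q$), but this affects nothing in the estimates.
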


\begin{proof} By \eqref{Fouriervariance} and \eqref{Fouriervariancerational}, we need to prove that
\begin{equation}\label{convergence}
\sum_{\substack{-q/2 < h \le q/2 \\ h \neq 0}} |\hat{f_q}(h)|^2 \frac{1-|\varphi (2 \pi h p/q)|^2}{|1-\varphi (2 \pi h p/q)|^2} \to  \sum_{h \neq 0} |\hat{f}(h)|^2 \frac{1-|\varphi (2 \pi h \alpha)|^2}{|1-\varphi (2 \pi h \alpha )|^2} .
\end{equation}
Since $f$ is Riemann integrable and $\varphi$ is continuous, we have term by term convergence for any fixed $h \neq 0$.

Note that $f$ is of bounded variation also on $\mathbb{Z}_q$ in the sense that
\[ \sum_{a=0}^{q-1} |f((a+1)/q)-f(a/q)| \le V(f). \]
From summation by parts we thus get that
\[ |\hat{f_q}(h)| \le \frac{V(f)}{q |1-e(h/q)|} \ll \frac{V(f)}{|h|} \]
remains true for the $\mathbb{Z}_q$-Fourier transform for all $0<|h| \le q/2$. On the other hand, by Lemma \ref{charfunctionlemma} (i),
\[  0 \le \frac{1-|\varphi (2 \pi h p/q)|^2}{|1-\varphi (2 \pi h p/q)|^2} = 1+\frac{\varphi (2 \pi h p/q)}{1-\varphi (2 \pi h p/q)} + \frac{\overline{\varphi (2 \pi h p/q)}}{1-\overline{\varphi (2 \pi h p/q)}} \ll \frac{1}{\| hp/q \|}. \]
Here
\[ \| hp/q \| \ge \| h \alpha \| - |h| \cdot \left| \alpha -\frac{p}{q} \right| \ge \frac{1}{2} \| h \alpha \| \]
for all $0<|h| \le q/2$; the second inequality follows from the best rational approximation property $\| h \alpha \| \ge \| q \alpha \|$. Therefore
\[ 0 \le |\hat{f_q}(h)|^2 \frac{1-|\varphi (2 \pi h p/q)|^2}{|1-\varphi (2 \pi h p/q)|^2} \ll \frac{V(f)^2}{h^2 \| h \alpha \|} , \]
and, as we have seen before, $\sum_{h \neq 0} 1/(h^2 \| h \alpha \|) < \infty$ is ensured by the assumption $\gamma <2$. The convergence of the series in \eqref{convergence} thus follows e.g.\ from the dominated convergence theorem.
\end{proof}

\subsection{Approximation by independent variables}\label{approxsection}

The main idea of the proof of Theorems \ref{maintheorem} and \ref{Wienertheorem} is that applying suitable small perturbations to the terms of $\sum_{k=1}^N f(S_k \alpha )$ introduces independence; the CLT and the LIL then follow from classical results of probability theory. This method goes back to Schatte \cite{SCH2}, \cite{SCH3}; we have recently improved \cite{BB3}, and generalized his approach to compact groups \cite{B1}, \cite{B2}. In our setup the source of independece is Lemma \ref{schattelemma}. We approximate the error of the perturbations in Lemma \ref{approxerror}, and then finish the proof of Theorems \ref{maintheorem} and \ref{Wienertheorem} at the end of the section.

Fix an irrational $\alpha$ such that $\inf_{h \in \mathbb{Z} \backslash \{ 0 \}} |h|^{\gamma} \| h \alpha \| >0$ with some constant $1 \le \gamma <2$, and let $\psi (k) = \sup_{0 \le x \le 1} |\Pr (\{ S_k \alpha \} \le x) -x|$, as before. For any integer $n \ge 0$, let us decompose the finite set of integers $[2^n,2^{n+1})$ into consecutive blocks $H_{n,1}, J_{n,1}, H_{n,2}, J_{n,2}, \ldots, H_{n,r_n}, J_{n,r_n}$. This way we obtain a block decomposition $H_{n,1}, J_{n,1}, \dots, H_{n,r_n}, J_{n,r_n}$, $n=0,1,\ldots$ of the set of positive integers. Further, let us introduce auxiliary random variables $\xi_{n,2}, \xi_{n,3}, \ldots, \xi_{n,r_n-1}$, $n=0,1,\ldots$ which are independent, uniformly distributed on $\mathbb{R}/\mathbb{Z}$, and independent of $X_1, X_2, \dots$. The following lemma is implicit in Schatte \cite{SCH2}, \cite{SCH3}; for a formal proof see \cite{BB3}.
\begin{lem}[Schatte]\label{schattelemma} There exists a sequence of random variables $\delta_{n,2}, \delta_{n,3}, \dots,$ $\delta_{n,r_n-1}$, $n=0,1,\ldots$ with the following properties. First, $\delta_{n,i}$ is measurable with respect to $X_k$, $k \in J_{n,i-1}$ and $\xi_{n,i}$, and $|\delta_{n,i}|\le \psi (|J_{n,i-1}|)$ for all $n \ge 0$ and $2 \le i<r_n$. Second, the random vectors $(S_k \alpha - \delta_{n,i} \pmod{\mathbb{Z}} \, : \, k \in H_{n,i})$ are independent and have uniformly distributed coordinates. Similarly, there exists a sequence of random variables $\delta_{n,2}', \delta_{n,3}', \dots, \delta_{n,r_n-1}'$, $n=0,1,\ldots$ with the following properties. First, $\delta_{n,i}'$ is measurable with respect to $X_k$, $k \in H_{n,i}$ and $\xi_{n,i}$, and $|\delta_{n,i}'|\le \psi (|H_{n,i}|)$ for all $n \ge 0$ and $2 \le i<r_n$. Second, the random vectors $(S_k \alpha - \delta_{n,i}' \pmod{\mathbb{Z}} \, : \, k \in J_{n,i})$ are independent and have uniformly distributed coordinates.
\end{lem}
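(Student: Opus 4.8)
The plan is to choose the small perturbation $\delta_{n,i}$ so that each corrected block $(S_k\alpha - \delta_{n,i} \pmod{\mathbb{Z}} : k \in H_{n,i})$ becomes, by construction, a fixed deterministic functional of two independent ingredients: a variable that is \emph{exactly} uniform on $\mathbb{R}/\mathbb{Z}$, and the block-internal increments of $S_k$. Once this is set up, mutual independence across blocks follows from a short filtration argument.

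Fix $2 \le i < r_n$, put $\ell = \max H_{n,i-1}$ (so the integers in $J_{n,i-1}$ are $\ell+1, \dots, \min H_{n,i}-1$), and for $k \in H_{n,i}$ split
\[ S_k\alpha \;=\; \underbrace{S_\ell\alpha}_{A} \;+\; \underbrace{\Bigl(\sum\nolimits_{j \in J_{n,i-1}} X_j\Bigr)\alpha}_{B} \;+\; \underbrace{\Bigl(\sum\nolimits_{j \in H_{n,i},\, j \le k} X_j\Bigr)\alpha}_{T_k} \pmod{\mathbb{Z}} . \]
The three pieces depend on the $X_j$ over pairwise disjoint index sets; in particular $A$ is $\sigma(X_j : j \le \ell)$-measurable, $B$ is $\sigma(X_j : j \in J_{n,i-1})$-measurable, and $B$ has the distribution of $S_{|J_{n,i-1}|}\alpha \pmod{\mathbb{Z}}$, so $\sup_{0 \le x \le 1}|\Pr(\{B\} \le x) - x| \le \psi(|J_{n,i-1}|)$. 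The key elementary device is the \emph{distributional transform}: for a $[0,1)$-valued $Y$ with distribution function $F$ and an independent $\xi$ uniform on $[0,1]$, the variable $U := F(Y^-) + \xi(F(Y) - F(Y^-))$ is uniform on $[0,1]$, is a function of $(Y,\xi)$, and obeys $Y - \varepsilon \le F(Y^-) \le U \le F(Y) \le Y + \varepsilon$ whenever $\sup_x|F(x) - x| \le \varepsilon$, hence $|U - Y| \le \varepsilon$. Applying this with $Y = B$, $\xi = \xi_{n,i}$, $\varepsilon = \psi(|J_{n,i-1}|)$ gives a uniform $U_{n,i}$, measurable with respect to $X_j$ $(j \in J_{n,i-1})$ and $\xi_{n,i}$, with $\|U_{n,i} - B\| \le \psi(|J_{n,i-1}|)$; I then set $\delta_{n,i} := B - U_{n,i} \pmod{\mathbb{Z}}$, reduced to the representative of absolute value $\le 1/2$, so that $|\delta_{n,i}| \le \|B - U_{n,i}\| \le \psi(|J_{n,i-1}|)$. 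With this choice $S_k\alpha - \delta_{n,i} \equiv A + U_{n,i} + T_k \pmod{\mathbb{Z}}$, so the corrected block equals $V_{n,i} := (c_{n,i} + T_k : k \in H_{n,i})$ with $c_{n,i} := A + U_{n,i}$; since $U_{n,i}$ is uniform and independent of $A$, the variable $c_{n,i}$ is uniform on $\mathbb{R}/\mathbb{Z}$, and hence every coordinate of $V_{n,i}$ is uniform.

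It remains to show that the vectors $V_{n,i}$ — over all relevant $n \ge 0$, $2 \le i < r_n$, in their natural order — are mutually independent, which is where the only real care is needed. For a relevant block $H_{n,i}$ set $\pi_{n,i} = \min J_{n,i-1}$ and let $\mathcal{G}_{n,i}^- := \sigma\big(\{X_j : j < \pi_{n,i}\} \cup \{\xi_{n',i'} : H_{n',i'} \text{ relevant and preceding } H_{n,i}\}\big)$. Two facts finish the argument. First, every $V_{n',i'}$ with $H_{n',i'}$ preceding $H_{n,i}$ is $\mathcal{G}_{n,i}^-$-measurable, since the $X_j$ it uses satisfy $j \le \max H_{n',i'} < \pi_{n,i}$ (for $i = 2$ the preceding relevant blocks live at earlier levels, so the relevant $H$-blocks are genuinely linearly ordered with maxima strictly below $\pi_{n,i}$) and its auxiliary variable is among those listed. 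Second, $V_{n,i}$ is independent of $\mathcal{G}_{n,i}^-$: indeed $A = S_\ell\alpha$ with $\ell = \pi_{n,i}-1$ is $\mathcal{G}_{n,i}^-$-measurable (this is exactly why $\mathcal{G}_{n,i}^-$ must reach back past the non-relevant block $H_{n,1}$ when $i = 2$), while $\big(U_{n,i},(T_k)_{k \in H_{n,i}}\big)$ is a function of $\{X_j : j \ge \pi_{n,i}\}$ and $\xi_{n,i}$ only and is therefore independent of $\mathcal{G}_{n,i}^-$, with $U_{n,i}$ uniform and independent of $(T_k)_k$. Consequently the conditional law of $V_{n,i} = (A + U_{n,i} + T_k)_k$ given $\mathcal{G}_{n,i}^-$ equals the law of $(u + U + T_k)_k$ for a constant $u$, a uniform $U$, and the fixed increments $(T_k)_k$; since adding the constant $u$ to the uniform $U$ again gives a uniform variable, this conditional law is non-random, so $V_{n,i} \perp \mathcal{G}_{n,i}^-$. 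Combining the two facts gives $V_{n,i} \perp (V_{n',i'} : H_{n',i'}$ precedes $H_{n,i})$, and induction over the block order yields mutual independence.

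The family $\delta_{n,i}'$ is obtained the same way with the roles of the $H$- and $J$-blocks exchanged: $H_{n,i}$ now plays the part of the separator immediately preceding $J_{n,i}$, so one applies the distributional transform to $(\sum_{j \in H_{n,i}} X_j)\alpha \pmod{\mathbb{Z}}$ (whose distribution is within $\psi(|H_{n,i}|)$ of uniform) together with $\xi_{n,i}$, and reruns the filtration argument. I expect the main obstacle to be organizational rather than probabilistic: one must place the horizon of the conditioning $\sigma$-algebra at the \emph{start} of the separator block, not at the end of the previous relevant block, so that the ``past'' summand $A$ of a corrected block is measurable with respect to it while its ``separator plus current'' part remains independent of it; the genuine probabilistic input is only the one-line distributional transform together with the remark that a deterministic shift preserves uniformity on $\mathbb{R}/\mathbb{Z}$.
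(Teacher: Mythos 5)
Your construction is correct: applying the (randomized) distributional transform to the separator-block sum $B=\bigl(\sum_{j\in J_{n,i-1}}X_j\bigr)\alpha \bmod \mathbb{Z}$, whose Kolmogorov distance from uniformity is at most $\psi(|J_{n,i-1}|)$, produces exactly the coupling needed — $\delta_{n,i}$ measurable in the required variables with $|\delta_{n,i}|\le\psi(|J_{n,i-1}|)$ — and your freezing/filtration induction (conditioning on everything strictly before $\min J_{n,i-1}$ plus the earlier auxiliary variables) correctly yields mutual independence and uniform coordinates, and likewise for the primed family. The paper itself does not prove this lemma but cites Schatte and \cite{BB3} for a formal proof, and your argument is essentially that standard proof (couple the nearly uniform separator sum with an exactly uniform variable within $\psi$, then use independence of the pre-separator past from the separator-plus-block future), so the proposal matches the intended approach rather than offering a genuinely different route.
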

As before, let $d=\mathrm{gcd}(\mathrm{supp} \, X_1)$ and $D=\mathrm{gcd} (\mathrm{supp} X_1 - \mathrm{supp} X_1)$. Let $c>0$ be a small constant, to be chosen, and let $B_n=(D/d)^2 \lceil 2^{(1/2-c)n} \rceil$ and $B_n'=(D/d) \lceil 2^{cn} \rceil$. We choose the sizes of the blocks so that $|H_{n,i}|=B_n$ and $|J_{n,i}|=B_n'$ for all $1 \le i < r_n$, and $|H_{n,r_n}|+|J_{n,r_n}|<B_n+B_n'$ is the remainder of $2^n$ modulo $B_n+B_n'$. By the nondegeneracy of $X_1$ we have $\psi (k) \ll k^{-1/(2 \gamma)} \le k^{-1/4}$, see \cite[Proposition 2.1]{BB1}; in particular, $|\delta_{n,i}| \ll (B_n')^{-1/4}$ and $|\delta_{n,i}'| \ll B_n^{-1/4}$.

Now fix $f \in \mathcal{F}$ with $E(f)=\int_0^1 f(x) \, \mathrm{d}x=0$, and consider the block sums $T_{n,i}=\sum_{k \in H_{n,i}} f(S_k \alpha)$ and $D_{n,i}=\sum_{k \in J_{n,i}} f(S_k \alpha)$. If $N=\max J_{n,R}$ for some $n \ge 0$ and $1 \le R \le r_n$, then
\[ \sum_{k=1}^N f(S_k \alpha )= \sum_{m=0}^{n-1} \sum_{i=1}^{r_m} (T_{m,i}+D_{m,i}) + \sum_{i=1}^{R} (T_{n,i}+D_{n,i}) . \]
Let $T_{n,i}^*=\sum_{k \in H_{n,i}} f(S_k \alpha - \delta_{n,i})$ and $D_{n,i}^*=\sum_{k \in J_{n,i}} f(S_k \alpha - \delta_{n,i}')$ denote the corresponding perturbed block sums. By Lemma \ref{schattelemma} we have that $T_{n,i}^*$, $n \ge 0$, $2 \le i <r_n$ are independent and $\E T_{n,i}^*=0$; similarly, $D_{n,i}^*$, $n \ge 0$, $2 \le i <r_n$ are independent and $\E D_{n,i}^*=0$. Further, $T_{n,i}^* \overset{d}{=} \sum_{k=1}^{|H_{n,i}|} f(U+S_k \alpha)$ and $D_{n,i}^* \overset{d}{=} \sum_{k=1}^{|J_{n,i}|} f(U+S_k \alpha)$, therefore by Lemma \ref{varianceprop} the variance is
\begin{equation}\label{tni*dni*variance}
\E T_{n,i}^{*2} = C(\alpha ,f) |H_{n,i}| + \left\{ \begin{array}{ll} O \left( V(f)^2 \log |H_{n,i}| \right) & \mathrm{if} \,\,\, \gamma =1, \\ O \left( V(f)^2 |H_{n,i}|^{2-2/\gamma} \right) & \mathrm{if} \,\,\, 1<\gamma <2 \end{array} \right.
\end{equation}
with implied constants depending only on the distribution of $X_1$, $\alpha$ and $\gamma$; the same holds for $\E D_{n,i}^{*2}$ with $|H_{n,i}|$ replaced by $|J_{n,i}|$.

\begin{lem}\label{approxerror} Assume that $X_1 >0$ a.s.\ and $\E X_1 < \infty$, and also that $1<\gamma<(3+\sqrt{5})/4 \approx 1.309$. There exists a small enough constant $c>0$ depending only on $\gamma$, such that for any $n \ge 0$ and any real $t \ge 1$,
\[ \begin{split} \Pr \left( \max_{2 \le R < r_n} \left| \sum_{i=2}^R (T_{n,i} - T_{n,i}^*) \right| \ge t V(f) n 2^{(1/2-c(2-\gamma )/8)n} \right) &\ll t^{-1/\gamma}, \\ \Pr \left( \max_{2 \le R < r_n} \left| \sum_{i=2}^R (D_{n,i} - D_{n,i}^*) \right| \ge t V(f) n 2^{(1/2-c(2-\gamma )/8)n} \right) &\ll t^{-1/\gamma} \end{split} \]
with implied constants depending only on the distribution of $X_1$, $\alpha$, $\gamma$ and $c$.
\end{lem}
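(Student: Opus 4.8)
Throughout, implied constants may depend on the distribution of $X_1$, on $\alpha$, on $\gamma$ and on $c$. I would prove only the bound for the $H$-blocks; the bound for the $J$-blocks is entirely analogous and in fact easier, since $|J_{n,i}|=B_n'\ll B_n$ and $|\delta_{n,i}'|\le\psi(B_n)\ll\psi(B_n')$. Replacing $X_j$ by $X_j/d$ and $\alpha$ by $d\alpha$ one may assume $d=1$, and by homogeneity $V(f)=1$, $E(f)=0$. \emph{Step 1 (reduction to exponential sums).} Fix a block $H_{n,i}$ ($2\le i<r_n$) and write $T_{n,i}-T_{n,i}^*=\sum_{k\in H_{n,i}}\big(f(S_k\alpha)-f(S_k\alpha-\delta_{n,i})\big)$. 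By Koksma's inequality (Lemma \ref{koksmaineq}) this is in absolute value at most the oscillation of $\sum_{k\in H_{n,i}}I_J^*(S_k\alpha)$ over intervals $J$ of length $|\delta_{n,i}|\le\psi(B_n')$. Since $X_1>0$ a.s., the $B_n$ numbers $S_k$, $k\in H_{n,i}$, are distinct integers, so Fourier (Erdős--Turán) smoothing at a level $H$ (a small power of $2^n$, to be fixed) expresses this oscillation through the exponential sums $\Sigma_h^{(i)}:=\sum_{k\in H_{n,i}}e(hS_k\alpha)$, the term proportional to $B_n|\delta_{n,i}|$ being common to all intervals of a given length and cancelling. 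Summing over any index interval $I\subseteq\{2,\dots,r_n-1\}$,
\[ \sum_{i\in I}(T_{n,i}-T_{n,i}^*)=\sum_{0<|h|\le H}\hat f(h)\sum_{i\in I}\big(1-e(-h\delta_{n,i})\big)\,\Sigma_h^{(i)}+O(|I|B_n/H), \]
and since $|\hat f(h)|\ll 1/|h|$ and $|1-e(-h\delta_{n,i})|\ll|h|\,\psi(B_n')$, each $\Sigma_h^{(i)}$ carries a weight of size $\ll\psi(B_n')$; the perturbation is small, but estimating $\sum_{i\in I}(T_{n,i}-T_{n,i}^*)$ by $\sum_{i\in I}|T_{n,i}-T_{n,i}^*|$ is hopeless, so cancellation among the blocks must be exploited.

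\emph{Step 2 (second moment with cancellation).} I would bound $\E\big|\sum_{i\in I}(T_{n,i}-T_{n,i}^*)\big|^2$ for every index interval $I$. By Step 1 and Minkowski this is $\ll(|I|B_n/H)^2$ plus $\big(\sum_{0<|h|\le H}|\hat f(h)|(\E|W_h^I|^2)^{1/2}\big)^2$ with $W_h^I=\sum_{i\in I}(1-e(-h\delta_{n,i}))\Sigma_h^{(i)}$. Expanding $\E|W_h^I|^2$ into a diagonal and an off-diagonal part: the diagonal part is $\ll\psi(B_n')^2\sum_{i\in I}\E|\Sigma_h^{(i)}|^2\ll\psi(B_n')^2|I|B_n\,\frac{1-|\varphi(2\pi h\alpha)|^2}{|1-\varphi(2\pi h\alpha)|^2}$ together with a Diophantine error $\ll\psi(B_n')^2H^{2\gamma-2}$ from Lemma \ref{expsummoment}; the off-diagonal part uses that $H_{n,i}$ and $H_{n,i'}$ lie at lag $\gtrsim|i-i'|B_n+B_n'$ apart and that $\delta_{n,i}$ is, up to $\psi(B_n')$, independent of $\Sigma_h^{(i)}$, combined with $|\E[\Sigma_h^{(i)}\overline{\Sigma_h^{(i')}}]|\ll\frac{B_n}{1-|\varphi(2\pi h\alpha)|}\,|\varphi(2\pi h\alpha)|^{(|i-i'|-1)B_n+B_n'}$ from Lemma \ref{charfunctionlemma}. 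Summing over $h$ with $|\hat f(h)|\ll1/|h|$, with the Diophantine estimate \eqref{diophestimate} and the convergence of $\sum_{h\ge1}1/(h^2\|h\alpha\|)$ (valid since $\gamma<2$), and then choosing $H$ so that the smoothing, Diophantine and off-diagonal errors are all dominated by the diagonal term, should give $\E\big|\sum_{i\in I}(T_{n,i}-T_{n,i}^*)\big|^2\ll|I|\,2^{(1/2-c(6-\gamma)/4)n}$ for a suitable small $c=c(\gamma)>0$. It is exactly the requirement that such $c$ and $H$ exist simultaneously that restricts $\gamma$ to $\big(1,(3+\sqrt{5})/4\big)$: beyond this threshold the off-diagonal correlations and the error $H^{2\gamma-2}$ can no longer be absorbed into the diagonal.

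\emph{Step 3 (maximal inequality and tail).} The Rademacher--Menshov maximal inequality, applied to the bound of Step 2 (which controls the second moment over every index interval), yields $\E\big(\max_{2\le R<r_n}|\sum_{i=2}^R(T_{n,i}-T_{n,i}^*)|\big)^2\ll(\log r_n)^2 r_n\,2^{(1/2-c(6-\gamma)/4)n}\ll n^2\,2^{(1-c(2-\gamma)/4)n}$, using $r_n\asymp2^{(1/2+c)n}$; taking square roots the $L^2$-norm of the maximum is $\ll n\,2^{(1/2-c(2-\gamma)/8)n}$, which is already the target quantity at $t=1$. To obtain the stated $t^{-1/\gamma}$ tail (and to guarantee finiteness of the second moments when $X_1$ has infinite variance) I would first truncate $X_1$ at level $t2^n$, i.e. work on the event that $\max_{k\le2^{n+1}}X_k\le t2^n$, whose complement has probability $\le2^{n+1}\Pr(X_1>t2^n)\le2\,\E X_1/t\ll t^{-1/\gamma}$ by Markov (using $1\le t$ and $1\le\gamma$); since every frequency $|h|\le H$ above satisfies $\|h\alpha\|\gg H^{-\gamma}\gg2^{-n}$, the truncation does not affect Steps 1--2, and Chebyshev on the truncated walk gives $\Pr\big(\max_{2\le R<r_n}|\sum_{i=2}^R(T_{n,i}-T_{n,i}^*)|\ge t\,n\,2^{(1/2-c(2-\gamma)/8)n}\big)\ll t^{-2}+t^{-1/\gamma}\ll t^{-1/\gamma}$. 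Restoring $V(f)$ by homogeneity finishes the $H$-block estimate, and the $J$-block estimate follows by the same argument with the smaller parameters $B_n'$, $\psi(B_n)$.

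\emph{Main obstacle.} Everything rests on Step 2: extracting genuine cancellation among the $r_n\asymp2^{(1/2+c)n}$ block terms $T_{n,i}-T_{n,i}^*$, which are neither independent nor martingale differences (the perturbations $\delta_{n,i}$ are correlated with the increments and with one another, and the irreversible chain's correlations decay only polynomially in the resonant directions $\|h\alpha\|$ small). Controlling the off-diagonal correlations while keeping the smoothing error $|I|B_n/H$ and the Diophantine error $H^{2\gamma-2}$ below the diagonal main term forces a delicate simultaneous choice of $c$ (in $B_n\asymp2^{(1/2-c)n}$, $B_n'\asymp2^{cn}$) and of $\log_2H/n$; this optimization is feasible precisely when $1<\gamma<(3+\sqrt{5})/4$, which is where that hypothesis in the statement comes from.
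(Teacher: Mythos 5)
Your overall architecture (Fourier smoothing at a level $H$, a second-moment bound over index intervals with block cancellation, Rademacher--Menshov, then Chebyshev plus a Markov/truncation step to produce the $t^{-1/\gamma}$ tail) matches the paper's, but two steps as written contain genuine gaps. First, the smoothing error in your Step 1 is not $O(|I|B_n/H)$ deterministically: a Fej\'er/Erd\H{o}s--Tur\'an approximation of $f$ of bounded variation has error $O(V(f))$ at an individual point, and the only available gain comes from the separation of the points, which here is the \emph{random} quantity $\|S_k\alpha-S_\ell\alpha\|\gg\big(\sum_{j=2^n}^{2^{n+1}}X_j\big)^{-\gamma}$. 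The paper applies Lemma \ref{fourierapprox} once to the union of the unperturbed blocks (getting error $\ll \frac{\log H}{H}\big(\sum_j X_j\big)^{\gamma}+1$, and it is Markov's inequality applied to this random error that produces the $t^{-1/\gamma}$ rate), and it treats the perturbed side by a completely different mechanism: since the perturbed vectors are exactly i.i.d.\ uniform, the tails $E_i^*$ are independent, mean zero, with variance computed from the high-frequency part $\hat g(h)$, and are controlled by Kolmogorov's maximal inequality. Your single deterministic error cannot cover either side: per-block smoothing alone already contributes $\gtrsim r_n\asymp 2^{(1/2+c)n}$, above the target threshold, while the union-based bound (even after your truncation $X_k\le t2^n$) forces $H\gtrsim 2^{(\gamma-1/2)n}$, i.e.\ $H$ cannot be ``a small power of $2^n$''.

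Second, and more fatally, your Step 2 reduction via Minkowski over $h$, i.e.\ bounding by $\big(\sum_{0<|h|\le H}|\hat f(h)|(\E|W_h^I|^2)^{1/2}\big)^2$, is quantitatively too lossy to reach the claimed bound $|I|\,2^{(1/2-c(6-\gamma)/4)n}$. The diagonal part of $\E|W_h^I|^2$ alone is $\gtrsim \min(1,h^2\psi(B_n')^2)\,|I|B_n/\|h\alpha\|$, and after the square-root-then-sum the relevant $h$-sum is $\big(\sum_{h\le H}h^{-1}\|h\alpha\|^{-1/2}\big)^2\asymp H^{\gamma-1}$ (up to logarithms), whereas keeping the weights $|\hat f(h_1)\hat f(h_2)|$ inside the expanded double sum gives the convergent $\sum_h h^{-2}\|h\alpha\|^{-1}$. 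Thus your route loses a factor of order $H^{\gamma-1}$, which destroys the required gain $2^{-c(2-\gamma)n/4}$ per block for \emph{every} $H$ large enough to control Step 1; conversely, any $H$ small enough for your Step 2 ruins Step 1. The paper resolves this tension by never splitting in $h$: it expands $\E\big|\sum_i(A_i-A_i^*)\big|^2$ fully in $(h_1,h_2)$, factorizes each term through $Y_i=\sum_{k\in J_{i-1}}X_k$ so that the perturbation factor is genuinely independent of the block exponential sum (your ``up to $\psi(B_n')$ independent'' is not a usable statement), sums the geometric series over the other block index \emph{first}, and then invokes Lemma \ref{charfunctionlemma}~(ii), $|1-b^{B}|/|1-b^{B+B'}|\ll B'=2^{cn}$ (this is where the divisibility of $B_n,B_n'$ by $D/d$ enters and where a bound using only $|\varphi|$, like your $\frac{B_n}{1-|\varphi|}|\varphi|^{\mathrm{lag}}$, is insufficient near $h\alpha$ close to multiples of $1/D$), obtaining an off-diagonal contribution $\ll H^{2\gamma-2}2^{cn}(S-R)$ that sits in the error term rather than multiplying the main term. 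Only then does the explicit choice $H=\lceil 2^{(3\gamma-1)n/(4\gamma^2-4\gamma+2)}\rceil$ balance the smoothing term $(\log H/(tH))^{1/\gamma}2^{(1-1/(2\gamma)+c)n}$ against $H^{2\gamma-2}2^{(-1/2+3c)n}t^{-2}$, and positivity of the resulting exponent is exactly the condition $\gamma<(3+\sqrt5)/4$; your sketch asserts this trade-off but, with the lossy Step 2, the optimization it refers to does not exist.
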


\begin{proof} We only give a proof for $\sum_{i=2}^R (T_{n,i}-T_{n,i}^*)$, as the proof for $\sum_{i=2}^R (D_{n,i}-D_{n,i}^*)$ is analogous. Replacing $X_1, X_2, \dots$ by $X_1/d, X_2/d, \dots$ and $\alpha$ by $d \alpha$, we may assume that $d=1$. We may also assume that $V(f)=1$. Since $n \ge 0$ is fixed, for the sake of simplicity we can write $T_i=T_{n,i}$, $H_i=H_{n,i}$, $J_i=J_{n,i}$, $B=B_n$, $B'=B_n'$, $r=r_n$, $\xi_i=\xi_{n,i}$ and $\delta_i=\delta_{n,i}$. Let $c>0$ be a small enough constant to be chosen, and note that $\min \{ B,B' \}=B' \le \sqrt{B+B'}$. Further, let $H>1$ be an integer, to be chosen. We start by approximating $T_i$ and $T_i^*$ by two exponential sums. For every $2 \le i <r$ let
\[ \begin{split} A_i &= \sum_{0<|h|<H} \left( 1-\frac{|h|}{H} \right) \hat{f}(h) \sum_{k \in H_i} e \left( h(S_k \alpha ) \right), \\ A_i^* &= \sum_{0<|h|<H} \left( 1-\frac{|h|}{H} \right) \hat{f}(h) \sum_{k \in H_i} e\left( h(S_k \alpha -\delta_i ) \right) , \end{split} \]
and let us write $T_i=A_i +E_i$ and $T_i^*=A_i^* +E_i^*$.

Fix $2 \le R < r$, and let us apply Lemma \ref{fourierapprox} to the points $S_k \alpha$, $k \in H_2 \cup \cdots \cup H_R$. Since $X_1$ attains positive integers only, these points satisfy
\[ \left\| S_k \alpha - S_{\ell} \alpha  \right\| \gg \left| S_k-S_{\ell} \right|^{-\gamma} \ge \left( \sum_{j=2^n}^{2^{n+1}} X_j \right)^{-\gamma} \]
for any $k \neq \ell$, and hence $\left| \sum_{i=2}^R E_i \right| \ll \log H/H \left( \sum_{j=2^n}^{2^{n+1}} X_j \right)^{\gamma} +1$. The same holds for the maximum over $2 \le R <r$, thus by $\E X_1<\infty$ and the Markov inequality,
\begin{equation}\label{maxER}
\Pr \left( \max_{2 \le R < r} \left| \sum_{i=2}^R E_i \right| \ge t n 2^{(1/2-c(2-\gamma )/8)n} \right) \ll \left( \frac{\log H}{t H} \right)^{1/\gamma} 2^{(1-1/(2 \gamma ) +c(2-\gamma )/(8 \gamma ))n} .
\end{equation}
The last term in the exponent satisfies $c(2-\gamma )/(8 \gamma ) \le c$.

Using the fact that the vectors $(S_k \alpha - \delta_i \pmod{\mathbb{Z}} \, : \, k \in H_i)$, $2 \le i < r$ are independent and their coordinates are uniformly distributed on $\mathbb{R}/\mathbb{Z}$, we get that $E_i^*$, $2 \le i <r$ are independent and $\E E_i^* =0$. Letting
\[ g(x)=f(x)-\sum_{0<|h|<H} \left( 1-\frac{|h|}{H} \right) \hat{f}(h) e(hx), \]
we have $E_i^*=T_i^*-A_i^* = \sum_{k \in H_i} g(S_k \alpha - \delta_i) \overset{d}{=} \sum_{k=1}^{|H_i|} g(U+S_k \alpha)$. Note that $\hat{g}(h)=|h| \hat{f}(h)/H$ if $0<|h|<H$, and $\hat{g}(h)=\hat{f}(h)$ if $|h| \ge H$. Using the nonnegativity of the Fej\'er kernel it is also not difficult to see that the Ces\`aro mean in the previous line has total variation $\le V(f)$, and hence $V(g) \le 2V(f)=2$. From the results seen in the proof of Lemma \ref{varianceprop} it thus follows that
\[ \begin{split} \E |E_i^*|^2 &\ll C(\alpha ,g) |H_i| + |H_i|^{2-2/\gamma} \\ &= |H_i| \sum_{h \neq 0} |\hat{g}(h)|^2 \frac{1-|\varphi (2 \pi h \alpha )|^2}{|1-\varphi (2 \pi h \alpha )|^2} + |H_i|^{2-2/\gamma} \\ &\ll |H_i| \sum_{h=1}^{H-1} \frac{1}{H^2 \| h \alpha \|} + |H_i| \sum_{h=H}^{\infty} \frac{1}{h^2 \| h \alpha \|} + |H_i|^{2-2/\gamma} \\ &\ll |H_i| H^{\gamma -2} + |H_i|^{2-2/\gamma} , \end{split} \]
and hence by the Kolmogorov inequality, after simplifying the exponent,
\begin{equation}\label{maxER*}
\Pr \left( \max_{2 \le R < r} \left| \sum_{i=2}^R E_i^* \right| \ge t n 2^{(1/2-c (2-\gamma )/8)n} \right) \le \frac{\sum_{i=2}^r \E |E_i^*|^2}{t^2 2^{(1-c (2-\gamma )/4)n}} \ll \frac{2^{cn}}{t^2 H^{2-\gamma}} + \frac{1}{t^2}
\end{equation}
provided that $c>0$ is small enough. Combining \eqref{maxER} and \eqref{maxER*} we can estimate the error of replacing $T_i$ by $A_i$ and $T_i^*$ by $A_i^*$, and we obtain
\begin{equation}\label{yiyi*}
\begin{split} \Pr \Bigg( \max_{2 \le R < r} \left| \sum_{i=2}^R (T_i - T_i^*) \right| &\ge t n 2^{(1/2-c(2-\gamma )/8)n} \Bigg) \ll \\ &\Pr \left( \max_{2 \le R < r} \left| \sum_{i=2}^R (A_i - A_i^*) \right| \ge \frac{t}{2} n 2^{(1/2-c(2-\gamma )/8)n} \right) \\ &+ \left( \frac{\log H}{t H} \right)^{1/\gamma} 2^{(1-1/(2 \gamma ) +c)n} + \frac{2^{cn}}{t^2 H^{2-\gamma}} + \frac{1}{t^2} .
\end{split}
\end{equation}

Next, fix $1 \le R <S<r$, and let us estimate $\E \left| \sum_{i=R+1}^S (A_i-A_i^*)\right|^2$. We start with the diagonal term $\E |A_i-A_i^*|^2$. Let $Y_i=\sum_{k \in J_{i-1}} X_k$. We have
\begin{equation}\label{ai-ai*}
A_i-A_i^*= \sum_{0<|h|<H} \left( 1-\frac{|h|}{H} \right) \hat{f}(h) \left( e(h Y_i \alpha )-e(h (Y_i \alpha - \delta_i )) \right) \sum_{k \in H_i} e(h (S_k-Y_i) \alpha ) ,
\end{equation}
where the factor $(e(h Y_i \alpha ) - e(h (Y_i \alpha - \delta_i))$ is independent of the sum over $k \in H_i$. Let $\mathcal{F}_i$ denote the $\sigma$-algebra generated by $X_k$, $k \in J_{i-1}$ and $\xi_i$, and note that $Y_i$ and $\delta_i$ are $\mathcal{F}_i$-measurable. We can apply Lemma \ref{expsummoment} to the i.i.d.\ sequence obtained from $X_1, X_2, \dots$ by deleting the terms $X_k$, $k \in J_{i-1}$ to estimate the conditional expectation with respect to $\mathcal{F}_i$ as
\begin{equation}\label{conditionalbound}
\begin{split} &\E \big( |A_i-A_i^*|^2 \mid \mathcal{F}_i \big) = \\ &|H_i| \sum_{0<|h|<H} \left( 1-\frac{|h|}{H} \right)^2 |\hat{f}(h)|^2 \cdot \left| 1-e(  -h\delta_i ) \right|^2 \frac{1-|\varphi (2 \pi h \alpha )|^2}{|1-\varphi (2 \pi h \alpha )|^2} +O\left( H^{2 \gamma -2} \right) . \end{split}
\end{equation}
Here $|H_i| \ll 2^{(1/2 -c)n}$, $|\hat{f}(h)|^2 \ll 1/h^2$ and $(1-|\varphi (2 \pi h \alpha)|^2) / |1-\varphi (2 \pi h \alpha)|^2 \ll 1/\| h \alpha \|$. On the one hand, $|1-e(-h \delta_i )|^2 \ll 1$. On the other hand, using $|\delta_i| \ll (B')^{-1/4} \ll 2^{-cn/4}$ we also have $|1-e(-h \delta_i )|^2 \ll h^2 |\delta_i|^2 \ll h^2 2^{-cn/2}$. The main term of \eqref{conditionalbound} is therefore
\[ \ll 2^{(1/2-c)n} \left( \sum_{0<|h|< 2^{cn/4}} \frac{2^{-cn/2}}{\| h \alpha \|} + \sum_{|h| \ge 2^{cn/4}} \frac{1}{h^2 \| h \alpha \|} \right) \ll 2^{(1/2-c-c(2-\gamma )/4)n} . \]
Taking the (total) expectation of \eqref{conditionalbound} and summing over $R+1 \le i \le S$, we obtain
\begin{equation}\label{diagestimate}
\sum_{i=R+1}^S \E |A_i-A_i^*|^2 \ll 2^{(1/2-c-c(2-\gamma )/4)n} (S-R) + H^{2 \gamma -2} (S-R) .
\end{equation}

Now fix the indices $R+1 \le i<j \le S$, and let us estimate the off-diagonal term $\E (A_i-A_i^*)(\overline{A_j}-\overline{A_j^*})$. Let $Y_i=\sum_{k \in J_{i-1}} X_k$ and $Y_j=\sum_{k \in J_{j-1}} X_k$. Note that \eqref{ai-ai*} still holds. We can thus derive a factorization of $A_j-A_j^*$ similar to \eqref{ai-ai*}, from which we altogether get
\begin{equation}\label{ai-ai*aj-aj*}
\begin{split} (A_i-&A_i^*) (\overline{A_j}-\overline{A_j^*}) = \\ &\sum_{\substack{0<|h_1|<H \\ 0<|h_2| <H}} \left( 1-\frac{|h_1|}{H} \right) \left( 1-\frac{|h_2|}{H} \right) \hat{f}(h_1) \overline{\hat{f}(h_2)} \times \\ &\left( e(h_1 Y_i \alpha )-e(h_1 (Y_i \alpha - \delta_i )) \right) \left( e(-h_2 (Y_i+ Y_j)\alpha) -e(-h_2 ((Y_i+Y_j) \alpha -\delta_j )) \right) \times \\ &\sum_{\substack{k \in H_i \\ \ell \in H_j}} e(h_1 (S_k-Y_i) \alpha - h_2 (S_{\ell}-Y_i-Y_j) \alpha ) . \end{split}
\end{equation}
We now take the expected value of \eqref{ai-ai*aj-aj*}. Note that the factor in terms of $Y_i, \delta_i, Y_j, \delta_j$ is independent of the sum over $k, \ell$. Observe also, that
\begin{equation}\label{eh1ai}
\begin{split} c(h_1, h_2) :=\E &\left( e(h_1 Y_i \alpha )-e(h_1 (Y_i \alpha - \delta_i )) \right) \times \\ &\left( e(-h_2 (Y_i+ Y_j)\alpha) -e(-h_2 ((Y_i+Y_j) \alpha -\delta_j )) \right) \end{split}
\end{equation}
depends only on $h_1, h_2$ but not on $i,j$. Indeed, since we chose the sizes of the blocks $J_{i-1}, J_{j-1}$ defining $Y_i, \delta_i, Y_j, \delta_j$ to be the same for all $i,j$ (within the interval $[2^n,2^{n+1})$), the random variable in \eqref{eh1ai} has the same distribution for all $i<j$. We have
\begin{equation}\label{h1tk-h2tl}
\E e(h_1 (S_k-Y_i) \alpha - h_2 (S_{\ell}-Y_i-Y_j) \alpha ) = \varphi (2 \pi (h_1-h_2 ) \alpha )^{k-B'} \varphi (-2 \pi h_2 \alpha )^{\ell -k-B'} .
\end{equation}
Let $a=\varphi (2 \pi (h_1-h_2)\alpha )$ and $b=\varphi (-2 \pi h_2 \alpha )$. Summing \eqref{h1tk-h2tl} over $k \in H_i$ and $\ell \in H_j$ we get
\[ \begin{split} \sum_{\substack{k \in H_i \\ \ell \in H_j}} \E e(h_1 (S_k-Y_i) \alpha - &h_2 (S_{\ell}-Y_i-Y_j) \alpha ) \\ &= \sum_{k=2^n+(i-1)(B+B')}^{2^n+(i-1)(B+B')+B-1} \sum_{\ell =2^n+(j-1)(B+B')}^{2^n+(j-1)(B+B')+B-1} a^{k-B'} b^{\ell -k-B'} \\ &=a^{2^n+(i-2)(B+B')+B} b^{(j-i-1)(B+B')} \frac{1-b^B}{1-b} \sum_{k=0}^{B-1} a^k b^{B-k-1}. \end{split} \]
From \eqref{ai-ai*aj-aj*}, \eqref{eh1ai} and the estimates $|c(h_1,h_2)| \le 4$, $|\hat{f}(h_1)| \le 1/|h_1|$, $|\hat{f}(h_2)| \le 1/|h_2|$ we get by fixing $R+1 \le i < S$ and summing over $j=i+1, i+2, \dots, S$ that
\[ \left| \sum_{j=i+1}^S \E (A_i-A_i^*) (\overline{A_j}-\overline{A_j^*}) \right| \ll \sum_{\substack{0<|h_1|<H \\ 0<|h_2| <H}} \frac{1}{|h_1| \cdot |h_2|} \cdot \left| \frac{1-b^B}{1-b^{B+B'}} \right| \cdot \frac{1}{|1-b|} \cdot \left| \sum_{k=0}^{B-1} a^k b^{B-k-1} \right| . \]
Lemma \ref{charfunctionlemma} (i) shows that here $1/|1-b| \ll 1/\| h_2 \alpha \|$ and $\left| \sum_{k=0}^{B-1} a^k b^{B-k-1} \right| \ll 1/\| h_1 \alpha \|$, while Lemma \ref{charfunctionlemma} (ii) gives $|1-b^B|/|1-b^{B+B'}| \ll 2^{cn}$. Hence
\[ \left| \sum_{j=i+1}^S \E (A_i-A_i^*) (\overline{A_j}-\overline{A_j^*}) \right| \ll 2^{cn} \sum_{\substack{0<|h_1|<H \\ 0<|h_2| <H}} \frac{1}{|h_1| \cdot \| h_1 \alpha \|} \cdot \frac{1}{|h_2| \cdot \| h_2 \alpha \|} . \]
Applying the classical Diophantine estimate \eqref{diophestimate}, and then summing over $R+1 \le i<S$ we can thus estimate the contribution of the off-diagonal terms $i<j$. The terms $j<i$ can be estimated similarly, and we finally get
\begin{equation}\label{offdiagestimate}
\left| \sum_{\substack{R+1 \le i,j \le S \\ i \neq j}} \E (A_i-A_i^*)(\overline{A_j} - \overline{A_j^*}) \right| \ll H^{2 \gamma -2} 2^{cn} (S-R).
\end{equation}
Combining \eqref{diagestimate} and \eqref{offdiagestimate} we get that for any $1 \le R<S <r$,
\[ \E \left| \sum_{i=R+1}^S (A_i-A_i^*) \right|^2 \ll 2^{(1/2-c-c(2-\gamma )/4)n} (S-R) + H^{2 \gamma -2} 2^{cn} (S-R), \]
and thus by the Rademacher--Menshov inequality \cite[Theorem F]{MO},
\[ \E \left( \max_{2 \le R<r} \left| \sum_{i=2}^R (A_i-A_i^*) \right| \right)^2 \ll 2^{(1/2-c-c(2-\gamma )/4)n} r \log^2 r + H^{2 \gamma -2} 2^{cn} r \log^2 r. \]
Note that $r \ll 2^{(1/2+c)n}$. Applying the Chebyshev inequality, from \eqref{yiyi*} we finally deduce
\[ \begin{split} \Pr \Bigg( \max_{2 \le R < r} &\left| \sum_{i=2}^R (T_i - T_i^*) \right| \ge t n 2^{(1/2-c(2-\gamma )/8)n} \Bigg) \ll \\ &\frac{H^{2\gamma -2} 2^{(-1/2+3c)n}}{t^2} + \left( \frac{\log H}{t H} \right)^{1/\gamma} 2^{(1-1/(2 \gamma ) +c)n} + \frac{2^{cn}}{t^2 H^{2-\gamma}} +\frac{1}{t^2} . \end{split} \]
Choosing $H=\lceil 2^{(3 \gamma -1) n /(4 \gamma^2-4\gamma +2)} \rceil$ the first two error terms have roughly the same order of magnitude. The estimate then simplifies to $\ll 2^{(-\tau +3c)n}n t^{-1/\gamma}$ with some $\tau$ depending only on $\gamma$; moreover, we have $\tau >0$ whenever $1<\gamma < (3+\sqrt{5})/4$. Therefore if $c>0$ is small enough depending only on $\gamma$, the estimate is $\ll t^{-1/\gamma}$, as claimed.
\end{proof}

\begin{proof}[Proof of Theorem \ref{maintheorem}] The properties of $C(\alpha, f)$ were proved in Lemma \ref{varianceprop} (i); it remains to prove the CLT \eqref{CLT} and the LIL \eqref{LIL}. We may assume that $E(f)=0$ and $V(f)=1$, and that $1<\gamma < (3+\sqrt{5})/4$. Let us fix a small enough constant $c>0$ for which the claim of Lemma \ref{approxerror} holds. For any integer $N \ge 1$ there exist integers $n=n(N) \ge 0$ and $2 \le R=R(N) <r_n$ such that $2^n \le N < 2^{n+1}$ and $|N-\max J_{n,R}| \ll 2^{(1/2-c)n}$. Further, $T_{m,1}$, $D_{m,1}$, $T_{m,r_m}$ and $D_{m,r_m}$ are all $O(2^{(1/2-c)m})$. Hence
\[ \sum_{m=0}^n (|T_{m,1}|+|D_{m,1}|+|T_{m,r_m}|+|D_{m,r_m}|) \ll 2^{(1/2-c)n} \ll N^{1/2-c} , \]
and consequently
\begin{equation}\label{fSkfirstdecomposition}
\sum_{k=1}^N f(S_k \alpha ) = \sum_{m=0}^{n-1} \sum_{i=2}^{r_m-1} (T_{m,i}+D_{m,i}) + \sum_{i=2}^R (T_{n,i}+D_{n,i}) + O(N^{1/2-c}) .
\end{equation}
Applying Lemma \ref{approxerror} with $t=m^{-1} 2^{\varepsilon m}$ where, say, $\varepsilon =c(2-\gamma )/16$, we get
\[ \Pr \left( \max_{2 \le R <r_m} \left|\sum_{i=2}^{R} (T_{m,i}+D_{m,i} - T_{m,i}^*-D_{m,i}^*) \right| \ge 2^{(1/2-\varepsilon ) m} \right) \ll m^{1/\gamma} 2^{-\varepsilon m /\gamma } , \]
and thus by the Borel--Cantelli lemma
\[ \max_{2 \le R < r_m} \left|\sum_{i=2}^{R} (T_{m,i}+D_{m,i} - T_{m,i}^*-D_{m,i}^*)\right|\ll 2^{(1/2-\varepsilon )m} \quad \text{a.s.}  \]
Summing over $m=0, 1, \ldots, n-1$, we see that replacing $T_{m,i}$ by $T_{m,i}^*$ and $D_{m,i}$ by $D_{m,i}^*$, the double sum on the right hand side of  \eqref{fSkfirstdecomposition} changes by $O(2^{(1/2-\varepsilon )n})=O(N^{1/2-\varepsilon})$. The same holds if we replace $T_{n,i}$ by $T_{n,i}^*$ and $D_{n,i}$ by $D_{n,i}^*$ in the second sum on the right hand side of \eqref{fSkfirstdecomposition}, and so we get
\[ \sum_{k=1}^{N} f(S_k \alpha) = \sum_{m=0}^{n-1} \sum_{i=2}^{r_m-1} (T_{m,i}^*+D_{m,i}^*) + \sum_{i=2}^{R} (T_{n,i}^*+D_{n,i}^*) + O(N^{1/2-\varepsilon}) \quad \text{a.s.} \]
Recall that the variables $D_{m, i}^*$, $2\le i < r_m$, $m=0, 1, \ldots$, viewed as a single sequence, are independent, mean zero random variables with variance $\E D_{m,i}^{*2} \ll |J_{m,i}| \ll 2^{cm}$, see \eqref{tni*dni*variance}. By the strong law of large numbers, the contribution of $D_{m,i}^*$ is negligible:
\[ \sum_{m=0}^{n-1} \sum_{i=2}^{r_m-1} D_{m,i}^* + \sum_{i=2}^R D_{n,i}^* \ll 2^{(1/4+c+\varepsilon )n} \ll N^{1/2-\varepsilon} \quad \textrm{a.s.}, \]
and so
\begin{equation}\label{fSkseconddecomposition}
\sum_{k=1}^{N} f(S_k \alpha) = \sum_{m=0}^{n-1} \sum_{i=2}^{r_m-1} T_{m,i}^* + \sum_{i=2}^{R} T_{n,i}^* + O(N^{1/2-\varepsilon}) \quad \text{a.s.}
\end{equation}
Here $T_{m, i}^*$, $2\le i < r_m$, $m=0, 1, \ldots$, viewed as a single sequence, are also independent, mean zero random variables with variance $\E T_{m,i}^{*2} \sim C( \alpha ,f) |H_{m,i}|$, see \eqref{tni*dni*variance}. Since $\sum_{m=0}^{n-1} \sum_{i=2}^{r_m-1} |H_{m,i}| + \sum_{i=2}^R |H_{n,i}| \sim N$, and $|T_{m,i}^*|=O(N^{1/2-c})$, $|T_{n,i}^*|=O(N^{1/2-c})$, the Lindeberg condition \cite[p.\ 292]{LO}, as well as Kolmogorov's condition for the LIL \cite[p.\ 272]{LO} are satisfied, and consequently
\[ N^{-1/2} \left( \sum_{m=0}^{n-1} \sum_{i=2}^{r_m-1} T_{m,i}^* + \sum_{i=2}^R T_{n,i}^* \right) \overset{d}{\to} \mathcal{N} \left( 0, \sigma^2 \right) \]
and
\[ \limsup_{N \to \infty} \left( 2 N \log \log N \right)^{-1/2} \left( \sum_{m=0}^{n-1} \sum_{i=2}^{r_m-1} T_{m,i}^* + \sum_{i=2}^R T_{n,i}^* \right) = \sigma \quad \text{a.s.} \]
with $\sigma = \sqrt{C(\alpha, f)}$. By \eqref{fSkseconddecomposition}, the CLT \eqref{CLT} and the LIL \eqref{LIL} follow.
\end{proof}

\begin{proof}[Proof of Theorem \ref{Wienertheorem}] We may assume that $E(f)=0$ and $V(f)=1$, and that $1<\gamma <(3+\sqrt{5})/4$. In the proof of Theorem \ref{maintheorem} we wrote $\sum_{1 \le k \le t} f(S_k \alpha )$ in the form $\sum_{1 \le k \le t} f(S_k \alpha ) = X(t)+Y(t)$ with stochastic processes
\[ X(t)= \sum_{m=0}^{n(t)-1} \sum_{i=2}^{r_m-1} T_{m,i}^* + \sum_{i=2}^{R(t)} T_{n(t),i}^* \]
and $Y(t)=O(t^{1/2-\varepsilon})$ a.s., where $\varepsilon >0$ is a constant depending only on $\gamma$, and $n(t) \ge 0$ and $2 \le R(t) < r_{n(t)}$ are integers such that $|t-\max J_{n(t),R(t)}| \ll t^{1/2-\varepsilon}$. Note that $X(t)$ and $Y(t)$ are measurable functions of the variables $X_k$ and the auxiliary variables $\xi_{n,i}$.

Applying a theorem of Strassen on the almost sure approximation of sums of independent variables by a Wiener process \cite[Theorem 4.4]{ST}, we get that after a suitable extension of the probability space there exists a stochastic process $\tilde{X}(t)$ in the Skorokhod space $D[0,\infty )$ with the same distribution as $X(t)$, such that $\tilde{X}(t)=\sigma W(t) + O(t^{1/2-\varepsilon'})$ a.s.\ with $\sigma =\sqrt{C(\alpha, f)}$, a standard Wiener process $W(t)$ and some constant $\varepsilon'>0$ depending only on $\gamma$. After another extension of the probability space we can introduce independent variables $\tilde{X}_k \overset{d}{=} X_k$ and $\tilde{\xi}_{n,i} \overset{d}{=} \xi_{n,i}$ such that $\tilde{X}(t)$ is the same measurable function of the $\tilde{X}_k$ and $\tilde{\xi}_{n,i}$ as $X(t)$ is of the $X_k$ and $\xi_{n,i}$. Let $\tilde{Y}(t)\overset{d}{=} Y(t)$ be the same measurable function of the $\tilde{X}_k$ and $\tilde{\xi}_{n,i}$ as $Y(t)$ is of the $X_k$ and the $\xi_{n,i}$. Then $\zeta (t):= \tilde{X}(t)+\tilde{Y}(t)$ has the same distribution as $X(t)+Y(t)\overset{d}{=} \sum_{1 \le k \le t} f(S_k \alpha)$, and
\[ \zeta (t)=\tilde{X}(t)+ O(t^{1/2-\varepsilon}) = \sigma W(t) + O(t^{1/2-\varepsilon''}) \quad \textrm{a.s.} \]
with $\varepsilon''=\min \{ \varepsilon, \varepsilon' \} >0$, as claimed.
\end{proof}

\section{Proof of Theorem \ref{discreteKolmogorovtheorem}}\label{ratesection}

As before, $X_1, X_2, \dots$ is a sequence of i.i.d.\ nondegenerate integer-valued random variables with maximal span $D=\mathrm{gcd} (\mathrm{supp} \, X_1 - \mathrm{supp} \, X_1)$ and characteristic function $\varphi$, and $S_k = \sum_{j=1}^k X_j$. Further, $p/q$ is a reduced fraction, and $\mathbb{Z}_q = \{ 0,1/q, \dots, (q-1)/q \}$ is the cyclic group of order $q$. Let $\psi_{\mathrm{disc}}$ be as in \eqref{psidisc}.

First of all note that $\mathrm{gcd} (D,q) =1$ is equivalent to the distribution of $X_1 p/q \pmod{\mathbb{Z}}$ not being supported on a translate of a proper subgroup of $\mathbb{Z}_q$. In particular, $S_k p/q \pmod{\mathbb{Z}}$ converges in distribution to the uniform distribution on $\mathbb{Z}_q$ if and only if $\mathrm{gcd} (D,q)=1$; this is further equivalent to $\psi_{\mathrm{disc}}(k) \to 0$ as $k \to \infty$.

Consider now
\[ \psi_{\mathrm{disc}}^* (k) := \max_J |\Pr (\{S_k p/q \} \in J) -|J|/q| \]
where the maximum is over all cyclic intervals $J \subseteq \mathbb{Z}_q$, and note that $\psi_{\mathrm{disc}} (k) \le \psi_{\mathrm{disc}}^* (k) \le 2 \psi_{\mathrm{disc}} (k)$. The rate of convergence in $\psi_{\mathrm{disc}}$ and $\psi_{\mathrm{disc}}^*$ are thus the same; however, using all cyclic intervals is in a sense more natural. Indeed, since the family of all cyclic intervals is translation invariant, adding an arbitrary integer to $X_1$ does not change the value of $\psi_{\mathrm{disc}}^* (k)$; it is also not difficult to see that $\psi_{\mathrm{disc}}^*(k)$ is nonincreasing in $k$.

Most importantly, to prove Theorem \ref{discreteKolmogorovtheorem} we may assume that $D=1$. Indeed, we can first translate $X_1$ so that $\Pr (X_1 =0) >0$; this changes neither $|\varphi|$ nor the order of magnitude of $\psi_{\mathrm{disc}}(k)$. But then $D \mid X_1$ a.s., and so we can replace $X_1, X_2, \dots$ by $X_1/D, X_2/D, \dots$, and $p/q$ by $Dp/q$. Note that the characteristic function of $X_1/D$ is $\varphi (x/D)$, and that $\min_{0<|h| \le q/2} |h| \cdot \| hDp/q \| \ge A/D>0$ follows from the assumption $\mathrm{gcd} (D,q)=1$. This reduces the general case of Theorem \ref{discreteKolmogorovtheorem} to the special case $D=1$.

\subsection{Upper bounds}

The second upper bound in Theorem \ref{discreteKolmogorovtheorem} will follow from the following general upper estimate.
\begin{lem}\label{upperboundlemma} Assume that $D=1$, and that $|\varphi (2 \pi x)| \le g(x) \le e^{-cx^2}$ for all $x \in [0,r]$ with some constants $c>0$ and $0<r \le 1/2$, and some function $g$ satisfying $g(x+y)$ $\le g(x) g(y)$ whenever $x,y,x+y \in [0,r]$. Assume further, that $\min_{0<|h| \le q/2} |h| \cdot \| hp/q \| \ge A>0$. Then for all $k \ge q^2$,
\[ \psi_{\mathrm{disc}} (k) \ll \frac{g(1/q)^k}{q} I_{\{ 1/q \le r \}} + e^{-\tau k} \log q \]
with some constant $\tau >0$ and an implied constant depending only on the distribution of $X_1$, $c,r,g$ and $A$. If $r=1/2$, then the term $e^{-\tau k} \log q$ can be removed.
\end{lem}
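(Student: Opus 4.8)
The plan is to run the argument on the Fourier side of the finite group $\mathbb{Z}_q$ and to use the Diophantine hypothesis to neutralise exactly those frequencies for which $|\varphi(2\pi hp/q)|$ is close to $1$. I keep the standing reduction $D=1$, so that $|\varphi(2\pi x)|=1$ if and only if $x\in\mathbb{Z}$.

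First I would write, by inversion over the characters of $\mathbb{Z}_q$, for every integer $0\le a<q$,
\[ \Pr(\{S_k p/q\}\le a/q)-\frac{a+1}{q}=\frac1q\sum_{h=1}^{q-1}\varphi(2\pi hp/q)^k\sum_{b=0}^{a}e(-hb/q), \]
and estimate the inner geometric sum by $\min\{a+1,|1-e(h/q)|^{-1}\}\le q/(2h^\ast)$, where $h^\ast=\min\{h,q-h\}$. Taking the maximum over $a$ this yields
\[ \psi_{\mathrm{disc}}(k)\le \frac12\sum_{h=1}^{q-1}\frac{|\varphi(2\pi hp/q)|^k}{h^\ast}. \]
The crucial manoeuvre is then to reindex by $j=hp\bmod q$, a bijection of $\{1,\dots,q-1\}$, so that $|\varphi(2\pi hp/q)|=|\varphi(2\pi j/q)|$ and $\|hp/q\|=j^\ast/q$ with $j^\ast=\min\{j,q-j\}$. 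Since $\|h^\ast p/q\|=\|hp/q\|$, the Diophantine hypothesis applied to the index $h^\ast\in\{1,\dots,\lfloor q/2\rfloor\}$ gives $h^\ast\ge A/\|hp/q\|=Aq/j^\ast$, hence the weight $1/h^\ast$ is at most $j^\ast/(Aq)$. After the symmetry $j\leftrightarrow q-j$ this produces
\[ \psi_{\mathrm{disc}}(k)\ll\frac1{Aq}\sum_{1\le j\le q/2}j\,|\varphi(2\pi j/q)|^k. \]

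Next I would split this sum at $j=rq$. For $j\le rq$ the hypothesis gives $|\varphi(2\pi j/q)|^k\le g(j/q)^k$, and iterating $g(x+y)\le g(x)g(y)$ — legitimate since every partial sum $\ell/q$ with $\ell\le j$ lies in $[0,r]$ — upgrades this to $g(1/q)^{jk}=\beta^j$, where $\beta:=g(1/q)^k$. Because $k\ge q^2$ and $g(1/q)\le e^{-c/q^2}$ we have $\beta\le e^{-c}<1$, so $\sum_{j\ge1}j\beta^j=\beta/(1-\beta)^2\ll\beta$ with implied constant depending only on $c$, and this part contributes $\ll\beta/(Aq)\ll g(1/q)^k/q$. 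For the tail $rq<j\le q/2$, compactness together with $D=1$ gives $|\varphi(2\pi j/q)|\le 1-\rho$ for some $\rho=\rho(r,X_1)>0$, so this part is $\ll q^2(1-\rho)^k/(Aq)=q(1-\rho)^k/A$; splitting the exponent and using $k\ge q^2$ one finds $q(1-\rho)^k\ll e^{-\tau k}$ with $\tau=\tfrac12|\log(1-\rho)|$, which is the claimed error term (the factor $\log q$ only giving room). When $r=1/2$ the tail is empty, so the error term disappears. Finally, if $1/q>r$ then $q$ is bounded in terms of $r$, the first term vanishes by the indicator, and $\psi_{\mathrm{disc}}(k)\ll (q-1)\max_{0<j<q}|\varphi(2\pi j/q)|^k\ll e^{-\tau k}$ since only finitely many pairs $(q,j)$ occur and each $|\varphi(2\pi j/q)|<1$.

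The main obstacle is the reindexing step. Without it, the frequency $h$ realising $\|hp/q\|=1/q$ could a priori carry a weight $1/h^\ast$ of order $1$, leaving a term $|\varphi(2\pi/q)|^k$ with no saving of $1/q$ — which would ruin the bound. It is precisely the Diophantine inequality $h^\ast\|hp/q\|\ge A$ that forbids a small $h^\ast$ from pairing with a small $\|hp/q\|$, and after reindexing it pairs every large value $|\varphi(2\pi j/q)|^k$ (small $j$) with a correspondingly small weight $j^\ast/(Aq)$. The only other point needing care is the bookkeeping by which submultiplicativity of $g$ turns $g(j/q)^k$ into $\beta^j$: this is what collapses the weighted geometric sum $\sum j\beta^j$ to something of the clean size $g(1/q)^k/q$, once $k\ge q^2$ has forced $\beta$ away from $1$.
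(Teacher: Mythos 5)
Your proof is correct, and at the decisive step it takes a genuinely different route from the paper's. Both arguments start from the same Berry--Esseen-type inequality, $\psi_{\mathrm{disc}}(k)\ll\sum_{h}|\varphi(2\pi hp/q)|^k/\min\{h,q-h\}$, obtained by expanding the indicator of $\{0,1/q,\dots,a/q\}$ in its $\mathbb{Z}_q$-Fourier series. The paper then keeps the sum indexed by $h$ and tames the near-resonant frequencies (those with $\|hp/q\|$ small) via the continued fraction expansion of $p/q$: it groups $q_m\le h<q_{m+1}$, uses the best-approximation property to see that the points $hp/q$ in each group are $\|q_mp/q\|$-separated, bounds each group by a geometric series in $g(\|q_mp/q\|)^k$, and only then invokes the hypothesis $|h|\cdot\|hp/q\|\ge A$ to show that the scale $m=M-1$, with $\|q_{M-1}p/q\|=1/q$, dominates and that $q_{M-1}$ is comparable to $q$. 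You instead spend the Diophantine hypothesis at the very beginning: reindexing by the bijection $h\mapsto hp\bmod q$ (the same multiplication-by-$p$ trick the paper reserves for the total variation estimate) and using $\min\{h,q-h\}\cdot\|hp/q\|\ge A$ to trade the harmonic weight for $j^*/(Aq)$, you reduce everything to $\frac1{Aq}\sum_{1\le j\le q/2}j\,|\varphi(2\pi j/q)|^k$, a sum over exact multiples of $1/q$, which iterated submultiplicativity of $g$ together with $k\ge q^2$ (forcing $g(1/q)^k\le e^{-c}$) collapses to $\ll g(1/q)^k/q$; the range $\|j/q\|>r$ yields an $e^{-\tau k}$ term, even slightly stronger than the stated $e^{-\tau k}\log q$, and your handling of $r=1/2$ and of the degenerate case $1/q>r$ matches the statement. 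Your route is shorter and avoids continued fractions entirely, at the price of using the hypothesis on every frequency, so the loss $1/A$ enters the bound directly and uniformly; the paper's scale-by-scale decomposition isolates exactly where the Diophantine input is needed, which is the structure the authors have in mind when they remark that the argument extends to more general Diophantine conditions (your version also extends, e.g.\ under $|h|^\gamma\|hp/q\|\ge A$ the same computation gives a main term $\ll g(1/q)^k/q^{1/\gamma}$). One cosmetic slip: the geometric sum $\sum_{b=0}^{a}e(-hb/q)$ is bounded by $2/|1-e(h/q)|$, not $1/|1-e(h/q)|$, but the bound $q/(2\min\{h,q-h\})$ that you actually use is still valid, so nothing downstream is affected.
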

\noindent Before we give the proof, let us make a few observations about the possible choices of the function $g$; one could call it a submultiplicative upper envelope of $|\varphi (2 \pi x)|$. Note first, that since we necessarily have $g(0)=1$, log-concavity on some interval $[0,r]$ implies submultiplicativity on the same interval.

Recalling the estimate $1-|\varphi (2 \pi x)| \gg \| Dx \|^2$ from \eqref{phiatnp2} --- which holds under the sole assumption of nondegeneracy without any moment condition ---, in the case $D=1$ we have $|\varphi (2 \pi x)| \le e^{-cx^2}$ for all $x \in [0,1/2]$ with some constant $c>0$. Because of its log-concavity, we can thus choose $g(x)=e^{-cx^2}$; in particular, Lemma \ref{upperboundlemma} applies to an arbitrary nondegenerate $X_1$.

Assuming $\E X_1^2 < \infty$, we can even choose $g(x)=|\varphi (2 \pi x)|$ with suitable constants $c>0$ and $0<r \le 1/2$. Indeed, in this case $X_1-X_2$ is a mean zero random variable with finite variance and characteristic function $\phi:=|\varphi|^2$. In particular, $\phi$ is twice continuously differentiable, $\phi'(0)=0$ and $\phi''(0)=-\mathrm{Var} (X_1-X_2)<0$. Therefore $(\log \phi)''(0) = (\phi''(0)\phi(0) - \phi'(0)^2)/\phi(0)^2 <0$; by continuity, $\phi$ is log-concave, and hence submultiplicative, in an open neighborhood of $0$.

\begin{proof}[Proof of Lemma \ref{upperboundlemma}] Let $0 \le a <q$ be an integer, and let $w: \mathbb{Z}_q \to \{ 0,1 \}$ be the indicator function of the set $\{ 0, 1/q, \dots, a/q \}$. Its $\mathbb{Z}_q$-Fourier coefficients satisfy
\[ |\hat{w}(h)| = \left| \frac{1}{q} \sum_{b=0}^a e(-hb/q) \right| \le \frac{2}{q |1-e(-h/q)|} \le \frac{1}{2|h|} \]
for any integer $0<|h| \le q/2$. Using the Fourier series expansion of $w$ we thus get
\[ \begin{split} \left| \Pr (\{ S_k p/q \} \le a/q) - (a+1)/q \right| &= \left| \E w(\{ S_k p/q\}) - (a+1)/q \right| \\ &= \bigg| \E \sum_{\substack{-q/2 < h \le q/2 \\ h \neq 0}} \hat{w}(h) e(S_k hp/q) \bigg| \\ &= \bigg| \sum_{\substack{-q/2 < h \le q/2 \\ h \neq 0}} \hat{w}(h) \varphi (2 \pi hp/q)^k \bigg| \\ &\le \sum_{\substack{-q/2 < h \le q/2 \\ h \neq 0}} \frac{|\varphi (2 \pi h p/q)|^k}{2|h|} , \end{split} \]
and consequently we have the Berry--Esseen type inequality
\begin{equation}\label{BerryEsseen}
\psi_{\mathrm{disc}} (k) \le \sum_{1 \le h \le q/2} \frac{|\varphi (2 \pi h p/q)|^k}{h} .
\end{equation}

For the rest of the proof constants and implied constants will depend only on the distribution of $X_1$, $c,r,g$ and $A$. By the assumption $D=1$, the function $|\varphi (2 \pi x)|$ is even and has smallest period $1$; in addition, $|\varphi (2 \pi x)| =1$ if and only if $x \in \mathbb{Z}$. Therefore $|\varphi (2 \pi x)| \le e^{-\tau}$ whenever $\| x \| \ge r/2$ with some constant $\tau >0$, and hence
\begin{equation}\label{hp/q>r}
\sum_{\substack{1 \le h \le q-1 \\ \| hp/q \| \ge r/2}} \frac{|\varphi (2 \pi h p/q)|^k}{h} \ll e^{-\tau k} \log q .
\end{equation}

Consider now the continued fraction representation $p/q = [a_0;a_1, \dots, a_M]$, and let $p_m/q_m = [a_0;a_1, \dots , a_m]$ denote the convergents. Since $p/q$ is reduced, we have $p_M=p$ and $q_M=q$. Let us now estimate
\[ \sum_{\substack{q_m \le h<q_{m+1} \\ \| hp/q \| < r/2}} \frac{|\varphi (2 \pi h p/q)|^k}{h} \le \frac{1}{q_m} \sum_{\substack{q_m \le h<q_{m+1} \\ \| hp/q \| < r/2}} g(\| h p/q \| )^k . \]
Let $J_0=(-\| q_m p/q \|, \| q_m p/q \|)$, $J_{\ell}=[\ell \| q_m p/q \|, (\ell+1) \| q_m p/q \| )$, $\ell =1,2,\dots$ and $J_{\ell}=((\ell-1) \| q_m p/q \| , \ell \| q_m p/q \| ]$, $\ell =-1,-2, \dots$. By the best rational approximation property, for any $0<h<q_{m+1}$ we have $\| hp/q \| \ge \| q_m p/q \|$. Consequently each interval $J_{\ell}$ contains at most one of the points $hp/q \pmod{\mathbb{Z}}$, $q_m \le h < q_{m+1}$, and the interval $J_0$ is empty. If we only consider those values of $h$ for which $\| hp/q \|<r/2$, then $J_{\ell}$ is also empty for all $|\ell | \ge r/ (2\| q_m p/q \|)$. Note that the submultiplicative assumption on $g$ in particular implies that $g(x)$ is nonincreasing on $[0,r]$, and that all nonempty $J_{\ell}$ is a subset of $[-r,r]$; therefore,
\[ \frac{1}{q_m} \sum_{\substack{q_m \le h<q_{m+1} \\ \| hp/q \| < r/2}} g( \| h p/q \| )^k \le \frac{2}{q_m} \sum_{1 \le \ell < r/(2\| q_m p/q \| )} g(\ell \| q_m p/q \| )^k . \]
By the assumptions on $g$, consecutive terms in the previous sum satisfy
\[ \frac{g((\ell +1) \| q_m p/q \| )^k}{g(\ell \| q_m p/q \| )^k} \le g(\| q_m p/q \| )^k \le e^{-ck \| q_m p/q \|^2} \le e^{-ck/q^2} \le e^{-c} . \]
The terms thus decay exponentially fast, hence
\[ \frac{2}{q_m} \sum_{1 \le \ell < r/(2 \| q_m p/q \| )} g(\ell \| q_m p/q \| )^k \ll \frac{g(\| q_m p/q \| )^k}{q_m} , \]
and by summing over $m$ we get
\[ \sum_{\substack{1 \le h \le q-1 \\ \| hp/q \| < r/2}} \frac{|\varphi (2 \pi h p/q)|^k}{h} \ll \sum_{\substack{m=1 \\ \| q_m p/q \| <r/2}}^{M-1} \frac{g(\| q_m p/q \| )^k}{q_m} . \]
Here the last term dominates. Indeed, by the assumptions on $g$,
\[ \begin{split} \sum_{\substack{m=1 \\ \| q_m p/q \| <r/2}}^{M-1} \frac{g(\| q_m p/q \| )^k}{q_m} &= \frac{g(\| q_{M-1} p/q \| )^k}{q_{M-1}} \sum_{\substack{m=1 \\ \| q_m p/q \| <r/2}}^{M-1} \frac{q_{M-1}}{q_m} \cdot \frac{g(\| q_m p/q \| )^k}{g(\| q_{M-1} p/q \| )^k} \\ &\le \frac{g(\| q_{M-1} p/q \| )^k}{q_{M-1}} \sum_{\substack{m=1 \\ \| q_m p/q \| <r/2}}^{M-1} \frac{q_{M-1}}{q_m} \cdot e^{-c(\| q_m p/q \| -\| q_{M-1} p/q \|)^2 k} \\ &\le \frac{g(\| q_{M-1} p/q \| )^k}{q_{M-1}} \sum_{\substack{m=1 \\ \| q_m p/q \| <r/2}}^{M-1} \frac{q_{M-1}}{q_m} \cdot e^{-c(q \| q_m p/q \| -1)^2} . \end{split} \]
Using $\| q_m p/q \| \ge 1/(q_{m+1}+q_m)$ it is not difficult to see that the terms of the last sum decay exponentially fast as $m$ decreases, showing that the sum is $\ll 1$. In the main factor we have $p/q=p_M/q_M$. Recalling the identity $p_m q_{m-1} -p_{m-1}q_m = (-1)^{m-1}$ from the theory of continued fractions, we get that $\| q_{M-1}p_M/q_M \| = 1/q_M$, hence we altogether deduce
\[ \sum_{\substack{1 \le h \le q-1 \\ \| hp/q \| < r/2}} \frac{|\varphi (2 \pi h p/q)|^k}{h} \ll \frac{g(\| q_{M-1} p/q \| )^k}{q_{M-1}} I_{\{ \| q_{M-1} p/q \| <r/2 \}} \ll \frac{g(1/q)^k}{q} I_{\{ 1/q \le r \}} . \]
The last relation together with \eqref{hp/q>r} shows
\[ \sum_{h=1}^{q-1} \frac{|\varphi (2 \pi h p/q)|^k}{h} \ll \frac{g(1/q)^k}{q} I_{\{ 1/q \le r \}} + e^{-\tau k} \log q , \]
and the claim follows from the Berry--Esseen type inequality \eqref{BerryEsseen}.

In the case $r=1/2$ (when $g$ is submultiplicative on the whole interval $[0,1/2]$) we can repeat the same proof without separating the terms $\| hp/q \| \ge r/2$ and $\| hp/q \| < r/2$, and obtain $\psi_{\mathrm{disc}}(k) \ll g(1/q)^k/q$.
\end{proof}

\subsection{Lower bounds}

The lower bounds in Theorem \ref{discreteKolmogorovtheorem} will follow from the following lemma.
\begin{lem}\label{lowerboundlemma} We have
\[ \psi_{\mathrm{disc}} (k) \ge \frac{|\varphi (2 \pi /q)|^k}{2(q-1)} \qquad \textrm{for all } k \ge 1 . \]
Assuming that $\E X_1^2 < \infty$, we also have
\[ \psi_{\mathrm{disc}} (k) \ge \frac{1}{12\sqrt{3 (\mathrm{Var} \, X_1) k}+6} \qquad \textrm{for all } 1 \le k \le \frac{(q-3)^2}{108 \mathrm{Var} \, X_1} . \]
\end{lem}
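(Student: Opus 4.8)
The plan is to obtain both lower bounds by isolating a single Fourier mode of the distribution of $S_k p/q$ on $\mathbb{Z}_q$. Throughout I will write $F(a) = \Pr(\{S_k p/q\} \le a/q)$ and $G(a) = F(a) - (a+1)/q$ for $0 \le a \le q-1$, and extend this by $G(-1) := 0$; then by \eqref{psidisc} we have $\psi_{\mathrm{disc}}(k) = \max_{0 \le a \le q-1} |G(a)|$, and moreover $G(q-1) = 0$ since $F(q-1) = 1$.

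For the exponential bound, I would first use $\gcd(p,q) = 1$ to choose an integer $h$ with $0 < h < q$ and $hp \equiv 1 \pmod q$, so that $hp/q \equiv 1/q \pmod 1$. Since $X_1$ is integer-valued, $x \mapsto \varphi(2\pi x)$ is $1$-periodic, whence
\[ \E e(-h S_k p/q) = \varphi(-2\pi hp/q)^k = \varphi(-2\pi/q)^k , \qquad \bigl| \E e(-h S_k p/q) \bigr| = |\varphi(2\pi/q)|^k . \]
On the other hand, using $\Pr(\{S_k p/q\} = a/q) - 1/q = G(a) - G(a-1)$ together with $\sum_{a=0}^{q-1} e(-ha/q) = 0$ (valid because $0 < h < q$), I would write
\[ \E e(-h S_k p/q) = \sum_{a=0}^{q-1} \bigl( G(a) - G(a-1) \bigr) e(-ha/q) = \bigl( 1 - e(-h/q) \bigr) \sum_{a=0}^{q-2} G(a) e(-ha/q) , \]
the second equality being a summation by parts in which both boundary terms drop out precisely because $G(-1) = G(q-1) = 0$. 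Bounding $|1 - e(-h/q)| \le 2$ and $|G(a)| \le \psi_{\mathrm{disc}}(k)$ in the $q-1$ summands, and comparing with the previous display, gives $|\varphi(2\pi/q)|^k \le 2(q-1)\,\psi_{\mathrm{disc}}(k)$, which is the first claim.

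For the polynomial bound, the idea is that when $k$ lies well below $q^2$ the law of $S_k$ — and hence of $S_k p/q$ — still carries an atom of weight $\gg k^{-1/2}$, which outweighs the uniform weight $1/q$. Concretely I would invoke Chebyshev's inequality to get $\Pr\bigl(|S_k - k\E X_1| < \sqrt{3k\,\mathrm{Var}\, X_1}\bigr) \ge 2/3$, observe that the open interval on the left contains at most $2\sqrt{3k\,\mathrm{Var}\, X_1} + 1$ integers, and conclude by the pigeonhole principle that some integer $j^*$ satisfies $\Pr(S_k = j^*) \ge \tfrac{2}{6\sqrt{3k\,\mathrm{Var}\, X_1} + 3}$. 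Setting $m^* = j^* p \bmod q$, this atom survives the projection to $\mathbb{Z}_q$, i.e. $\Pr(\{S_k p/q\} = m^*/q) \ge \Pr(S_k = j^*)$. In the range $k \le (q-3)^2/(108\,\mathrm{Var}\, X_1)$ one has $6\sqrt{3k\,\mathrm{Var}\, X_1} + 3 \le q$, so that $1/q \le \tfrac{1}{6\sqrt{3k\,\mathrm{Var}\, X_1}+3}$, and hence
\[ \Pr\bigl(\{S_k p/q\} = m^*/q\bigr) - \frac{1}{q} \ge \frac{2}{6\sqrt{3k\,\mathrm{Var}\, X_1}+3} - \frac{1}{6\sqrt{3k\,\mathrm{Var}\, X_1}+3} = \frac{1}{6\sqrt{3k\,\mathrm{Var}\, X_1}+3} . \]
Since the left-hand side equals $G(m^*) - G(m^*-1) \le |G(m^*)| + |G(m^*-1)| \le 2\psi_{\mathrm{disc}}(k)$, the second claim follows.

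The hard part here is not the strategy but the bookkeeping. In the exponential bound the only real care is to check that the summation by parts produces a genuine one-mode identity with no residual boundary terms — which is exactly why the convention $G(-1) = 0$ and the observation $G(q-1) = 0$ are set up at the outset. In the polynomial bound the delicate point is to pick the Chebyshev deviation $\sqrt{3k\,\mathrm{Var}\, X_1}$ (rather than, say, $2\sqrt{k\,\mathrm{Var}\, X_1}$) so that the resulting threshold $6\sqrt{3k\,\mathrm{Var}\, X_1}+3 \le q$ matches the stated range $k \le (q-3)^2/(108\,\mathrm{Var}\, X_1)$ on the nose; a slightly different choice would force a weaker range or uglier constants.
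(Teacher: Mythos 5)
Your proposal is correct and follows essentially the same route as the paper: for the exponential bound you isolate the Fourier mode $h=p^{-1}\bmod q$ and control it by $\psi_{\mathrm{disc}}(k)$ via summation by parts, which is exactly the paper's discrete Koksma inequality applied to $f(x/q)=e(hx/q)$ (with the same factor $2(q-1)$); for the polynomial bound you use the identical Chebyshev--pigeonhole atom of weight at least $2/(6\sqrt{3k\,\mathrm{Var}\,X_1}+3)$ and compare it with the uniform weight $1/q$, merely reorganizing the final triangle inequality. Both constants and the range of $k$ come out exactly as in the paper, so there is nothing to fix.
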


\begin{proof} To prove the first claim, we use a discrete version of Koksma's inequality: for any $\mathbb{Z}_q$-valued random variables $X$ and $Y$, and any $f: \mathbb{Z}_q \to \mathbb{C}$,
\begin{equation}\label{discreteKoksma}
\left| \E f(X) - \E f(Y) \right| \le V_q(f) \max_{0 \le a <q} \left| \Pr (X  \le a/q) - \Pr (Y \le a/q) \right|
\end{equation}
where $V_q(f) = \sum_{a=0}^{q-2} |f((a+1)/q) - f(a/q) |$ is the $\mathbb{Z}_q$-total variation of $f$. The inequality \eqref{discreteKoksma} follows from a simple summation by parts. Now let $0<h<q$ be an integer, and note that $f(x/q)=e(h x/q)$ has $\mathbb{Z}_q$-total variation
\[ V_q (f) = \sum_{a=0}^{q-2} |e(h (a+1)/q) - e(h a/q)| = (q-1) |2 \sin (\pi h/q)| \le 2(q-1) . \]
Applying \eqref{discreteKoksma} with $X=\{ S_k p/q \}$ and a uniformly distributed $Y$, and noting that $\E e(h Y) =0$, we get
\[ \left| \E e(S_k hp/q) \right| \le 2(q-1) \psi_{\mathrm{disc}}(k) . \]
Choosing $h$ to be the multiplicative inverse of $p$ modulo $q$, we finally obtain $|\varphi (2 \pi /q)|^k = |\E e( S_k /q)| \le 2(q-1) \psi_{\mathrm{disc}}(k)$, as claimed.

Next, assume that $\E X_1^2 < \infty$, and let $1 \le k \le (q-3)^2 / (108 \mathrm{Var}\, X_1)$. By the Chebyshev inequality, $S_k$ lies in an interval of length $2 \sqrt{3 (\mathrm{Var} \, X_1)k}$ with probability at least $2/3$. There are at most $2 \sqrt{3 (\mathrm{Var} \, X_1)k}+1$ integers in this interval, hence by the pigeonhole principle $\Pr (S_k =n) \ge (3\sqrt{3 (\mathrm{Var} \, X_1)k}+3/2)^{-1}$ for some $n$. It follows that the distribution of $\{ S_k p/q \}$ has an atom, say $a/q \in \mathbb{Z}_q$, of weight $\ge (3\sqrt{3 (\mathrm{Var} \, X_1)k}+3/2)^{-1}$. Letting $F(a/q) = \Pr (\{ S_k p/q \} \le a/q)$ we thus have
\[ \begin{split} \frac{1}{3\sqrt{3 (\mathrm{Var} \, X_1)k}+3/2} &= \left| F(a/q) - F((a-1)/q) \right| \\ &\le \left| F(a/q) -(a+1)/q \right| + |F((a-1)/q) -a/q| + 1/q . \end{split} \]
Here $1/q$ is at most half of the left hand side, and consequently either $| F(a/q) -(a+1)/q|$ or $|F((a-1)/q) -a/q|$ is at least $(12\sqrt{3(\mathrm{Var} \, X_1)k}+6)^{-1}$. In particular, $\psi_{\mathrm{disc}} (k) \ge (12\sqrt{3 (\mathrm{Var} \, X_1) k}+6)^{-1}$, as claimed. 
\end{proof}

\begin{proof}[Proof of Theorem \ref{discreteKolmogorovtheorem}] As observed at the beginning of Section \ref{ratesection}, we may assume that $D=1$. First, we prove the upper bounds, starting with the case $k \le q^2$. In \cite{BB1} we showed that if $\alpha$ is a badly approximable irrational and $\E X_1^2 < \infty$, then $\sup_{0 \le x \le 1} |\Pr (\{ S_k \alpha \} \le x)-x| \ll k^{-1/2}$ with implied constant depending only on the maximal partial quotient in the continued fraction of $\alpha$. In fact, the same proof works for a rational $p/q$ in place of $\alpha$, provided that $k \le q^2$; in particular, $\sup_{0 \le x \le 1} |\Pr (\{ S_k p/q \} \le x)-x| \ll k^{-1/2}$. Since the uniform distribution on $\mathbb{Z}_q$ also has distance $\ll q^{-1} \ll k^{-1/2}$ from the Lebesgue measure in the (continuous) Kolmogorov metric, it follows that $\psi_{\mathrm{disc}}(k) \ll k^{-1/2}$.

Next, let $k>q^2$. As observed, we can apply Lemma \ref{upperboundlemma} with $g(x)=|\varphi (2 \pi x)|$ and suitable constants $c>0$ and $0<r \le 1/2$ to obtain
\[ \psi_{\mathrm{disc}} (k) \ll \frac{|\varphi (2 \pi /q)|^k}{q} I_{\{ 1/q \le r \}} + e^{-\tau k} \log q . \]
It is easy to see that there exists a constant $q_0$ depending only on the distribution of $X_1$ such that for all $q \ge q_0$ and all $k>q^2$, the second term is negligible compared to the first one; in particular, $\psi_{\mathrm{disc}}(k) \ll |\varphi (2 \pi /q)|^k/q$, as claimed.

Finally, we prove the lower bounds. Lemma \ref{lowerboundlemma} immediately shows that the claim holds for any $k \le Cq^2$ with some constant $C>0$, and also for all $k>q^2$. To see the claim on the remaining interval $Cq^2 \le k \le q^2$, simply note that the asymptotics $|\varphi (2 \pi x)|^2=1-4 \pi^2 (\mathrm{Var} \, X_1) x^2 (1+o(1))$ as $x \to 0$ shows that in this case
\[ \psi_{\mathrm{disc}}(k) \ge \frac{|\varphi (2 \pi /q)|^k}{2(q-1)} \ge \frac{(1-8 \pi^2 (\mathrm{Var} \, X_1)/q^2)^{k/2}}{2(q-1)} \gg k^{-1/2}, \]
provided that $q$ is large enough.
\end{proof}

\subsection{Total variation metric}

By similar arguments as those at the beginning of Section \ref{ratesection}, we have $\psi_{\mathrm{TV}}(k) \to 0$ if and only if $\mathrm{gcd}(D,q)=1$. In addition, to prove \eqref{psiTV}, we may assume that $D=1$. By the so-called ``upper bound lemma'' in \cite{CDG},
\[ \psi_{\mathrm{TV}} (k)^2 \le \frac{1}{4} \sum_{h=1}^{q-1} |\E e(S_k h p/q)|^2 = \frac{1}{4} \sum_{h=1}^{q-1} |\varphi (2 \pi h/q)|^{2k} . \]
In the second step we used the fact that multiplication by $p$ is a bijection of all nonzero remainders modulo $q$. As observed in the paragraph after Lemma \ref{upperboundlemma}, the function $|\varphi (2 \pi x)|$ is submultiplicative on $[0,r]$, and $|\varphi (2 \pi x)| \le e^{-\tau}$ whenever $\| x \| \ge r$ with suitable constants $0<r \le 1/2$ and $\tau >0$. Following the methods in the proof of Lemma \ref{upperboundlemma}, we get
\[ \sum_{\substack{-rq \le h \le rq \\ h \neq 0}} |\varphi (2 \pi h/q)|^{2k} \le 2 \sum_{1 \le h \le rq} |\varphi (2 \pi /q)|^{2hk} \le 2 \frac{|\varphi (2 \pi /q)|^{2k}}{1-|\varphi (2 \pi /q)|^{2k}} , \]
and
\[ \sum_{rq<|h| \le q/2} |\varphi (2 \pi h/q)|^{2k} \le e^{-2 \tau k} q ; \]
therefore,
\begin{equation}\label{psiTVupperbound}
\psi_{\mathrm{TV}} (k)^2 \le \frac{1}{2} \cdot \frac{|\varphi (2 \pi /q)|^{2k}}{1-|\varphi (2 \pi /q)|^{2k}} + \frac{1}{4} e^{-2 \tau k} q .
\end{equation}
It is now easy to see that for all large enough $q$ and $k \ge 1$ we have $\psi_{\mathrm{TV}}(k) \ll |\varphi (2 \pi /q)|^k$, as claimed. Indeed, if, say, $|\varphi (2 \pi /q)|^{2k} \ge 1/2$, then the claim follows from the trivial estimate $\psi_{\mathrm{TV}}(k) \le 1$. If $|\varphi (2 \pi /q)|^{2k} < 1/2$, then we necessarily have $k \gg q^2$, and consequently $e^{-2 \tau k} q \ll |\varphi (2 \pi /q)|^{2k}$ for all large enough $q$; the claim then follows from \eqref{psiTVupperbound}. This finishes the proof of the upper bound in \eqref{psiTV}.

To see the lower bound in \eqref{psiTV}, simply note that $f(x/q)=e(hx/q)$, where $h$ is the multiplicative inverse of $p$ modulo $q$, has maximum norm $1$ on $\mathbb{Z}_q$. Hence
\[ \psi_{\mathrm{TV}} (k) \ge \frac{1}{2} |\E e(S_k hp/q)| = \frac{1}{2} |\varphi (2 \pi /q)|^k , \]
as claimed.

\subsection*{Acknowledgments}

Istv\'an Berkes is supported by NKFIH grant K 125569. Bence Borda is supported by the Austrian Science Fund (FWF), project Y-901.


\begin{thebibliography}{99}
\footnotesize{

\bibitem{ABS} C.\ Aistleitner, I.\ Berkes and K.\ Seip: \textit{GCD sums from Poisson integrals and systems of dilated functions.} J.\ Eur.\ Math.\ Soc.\ 17 (2015), 1517--1546.

\bibitem{A} D.\ Aldous: \textit{Random walks on finite groups and rapidly mixing Markov chains.} Seminar on probability, XVII, 243--297. Lecture Notes in Math., 986, Springer, Berlin, 1983.

\bibitem{AD} D.\ Aldous and P.\ Diaconis: \textit{Shuffling cards and stopping times.} Amer.\ Math.\ Monthly 93 (1986), 333--348.

\bibitem{BA} R.\ Baker: \textit{Metric number theory and the large sieve.} J.\ London Math.\ Soc.\ 24 (1981), 34--40.

\bibitem{BB1} I.\ Berkes and B.\ Borda: \textit{Berry--Esseen bounds and Diophantine approximation.} Anal.\ Math.\ 44 (2018), 149--161.

\bibitem{BB2} I.\ Berkes and B.\ Borda: \textit{On the discrepancy of random subsequences of $\{ n \alpha \}$.} Acta Arith.\ 191 (2019), 383--415.

\bibitem{BB3} I.\ Berkes and B.\ Borda: \textit{On the discrepancy of random subsequences of $\{ n \alpha \}$, II.} Acta Arith.\ 199 (2021), 303--330.

\bibitem{BB4} I.\ Berkes and B.\ Borda: \textit{On the law of the iterated logarithm for random exponential sums.} Trans.\ Amer.\ Math.\ Soc.\ 371 (2019), 3259--3280.

\bibitem{BW} I.\ Berkes and M.\ Weber: \textit{On the convergence of $\sum c_k f(n_kx)$.} Mem.\ Amer.\ Math.\ Soc.\ 201 (2009), no.\ 943, viii+72 pp.

\bibitem{BO1} S.\ Bobkov: \textit{Central limit theorem and Diophantine approximations.} J.\ Theoret.\ Probab.\ 31 (2018), 2390--2411.

\bibitem{BO2} S.\ Bobkov: \textit{Khinchin's theorem and Edgeworth approximations for weighted sums.} Ann.\ Statist.\ 47 (2019), 1616--1633.

\bibitem{B1} B.\ Borda: \textit{Equidistribution of random walks on compact groups.} Ann.\ Inst.\ Henri Poincar\'e Probab.\ Stat.\ 57 (2021), 54--72.

\bibitem{B2} B.\ Borda: \textit{Equidistribution of random walks on compact groups II. The Wasserstein metric.} Bernoulli 27 (2021), 2598--2623.

\bibitem{CH} K.\ Chung: \textit{Markov Chains with Stationary Transition Probabilities.} Second edition. Die Grundlehren der mathematischen Wissenschaften, Band 104, Springer-Verlag, New York, 1967.

\bibitem{CDG} F.\ Chung, P.\ Diaconis and R.\ Graham: \textit{Random walks arising in random number generation.} Ann.\ Probab.\ 15 (1987), 1148--1165.

\bibitem{DI} P.\ Diaconis: \textit{Group Representations in Probability and Statistics.} Institute of Mathematical Statistics Lecture Notes--Monograph Series, 11. Institute of Mathematical Statistics, Hayward, CA, 1988.

\bibitem{HS} D.\ Hensley and F.\ Su: \textit{Random walks with badly approximable numbers.} Unusual applications of number theory, 95--101, DIMACS Ser.\ Discrete Math.\ Theoret.\ Comput.\ Sci., 64, Amer.\ Math.\ Soc., Providence, RI, 2004.

\bibitem{KN} L.\ Kuipers and H.\ Niederreiter: \textit{Uniform Distribution of Sequences.} Pure and Applied Mathematics. Wiley-Interscience, New York-London-Sydney, 1974.

\bibitem{LE} P.\ L\'evy: \textit{L'addition des variables al\'eatoires d\'efinies sur un circonf\'erence.} Bull.\ Soc.\ Math.\ France 67 (1939), 1--41.

\bibitem{LR} M.\ Lewko and M.\ Radziwi\l\l: \textit{Refinements of G\'al's theorem and applications.} Adv.\ Math.\ 305 (2017), 280--297.

\bibitem{LO} Lo\`eve, M.: \textit{Probability Theory. I.} 4th Edition. Graduate Texts in Mathematics, Vol.\ 45. Springer-Verlag, New York-Heidelberg, 1977.

\bibitem{MO} M\'oricz, F.: \textit{Moment inequalities and the strong laws of large numbers.} Z.\ Wahrscheinlichkeitstheorie und Verw.\ Gebiete 35 (1976), no.\ 4, 299--314.

\bibitem{PH} W.\ Philipp: \textit{A functional law of the iterated logarithm for empirical distribution functions of weakly dependent random variables.} Ann.\ Probability 5 (1977), 319--350.

\bibitem{RO} H.\ Robbins: \textit{On the equidistribution of sums of independent random variables.} Proc.\ Amer.\ Math.\ Soc.\ 4 (1953), 786--799.

\bibitem{SCH1} P.\ Schatte: \textit{On the asymptotic uniform distribution of the $n$-fold convolution mod 1 of a lattice distribution.} Math.\ Nachr.\ 128 (1986), 167--178.

\bibitem{SCH2} P.\ Schatte: \textit{On a law of the iterated logarithm for sums mod 1 with application to Benford's law.} Probab.\ Theory Related Fields 77 (1988), 167--178.

\bibitem{SCH3} P.\ Schatte: \textit{On a uniform law of the iterated logarithm for sums mod 1 and Benford's law.} Lithuanian Math.\ J.\ 31 (1991), 133--142.

\bibitem{ST} V.\ Strassen: \textit{Almost sure behavior of sums of independent random variables and martingales.} 1967 Proc.\ Fifth Berkeley Sympos.\ Math.\ Statist.\ and Probability (Berkeley, Calif., 1965/66) Vol.\ II: Contributions to Probability Theory, Part 1, pp.\ 315--343 Univ.\ California Press, Berkeley, Calif.

\bibitem{SU} F.\ Su: \textit{Convergence of random walks on the circle generated by an irrational rotation.} Trans.\ Amer.\ Math.\ Soc.\ 350 (1998), 3717--3741.

}
\end{thebibliography}
\end{document}